\pgfplotsset{compat=1.17}
\newcommand{\zoneCunCdeux}{D_f(c_1, c_2)}
\newcommand{\zoneCdeuxCun}{D_f(c_2, c_1)}
\newcommand{\zoneCiCj}{D_f(c_i, c_j)}
\newcommand{\zoneVotant}{D_f(v) }
\newcommand{\hypSep}{boundary hypersurface}
\newcommand{\hyp}{hypersurface }
\newcommand{\hyps}{hypersurfaces }
\newcommand{\lprofile}{$\ell$-Euclidean }
\newcommand{\lunprofile}{$\ell_1$-Euclidean }
\newcommand{\linfprofile}{$\ell_\infty$-Euclidean }
\newcommand{\drectCiCj}{paral(c_i, c_j)}
\newcommand{\drect}[2]{paral(#1,#2)}
\newcommand{\os}[1]{#1}
\newtheorem{df}{Definition}
\newtheorem{prop}{Proposition}
\newtheorem{example}{Example}
\newtheorem{lem}{Lemma}
\newtheorem{thm}{Theorem}
\newtheorem{cor}{Corollary}
\newtheorem{obs}{Observation}
\title{Euclidean preferences in the plane under $\ell_1$, $\ell_2$ and $\ell_\infty$ norms
}
\author{Bruno Escoffier$^{1,2}$ \and Olivier Spanjaard$^1$ \and Magdal\'ena Tydrichov\'a$^1$}
\date{%
    $^1$Sorbonne Université, CNRS, LIP6, F-75005 Paris, France\\%
    $^2$Institut Universitaire de France\\[2ex]%
}
\begin{document}

\maketitle

\begin{abstract}
We present various results about Euclidean preferences in the plane under $\ell_1$, $\ell_2$ and $\ell_{\infty}$ norms. \os{
When there are four candidates, we show that the maximal size (in terms of the number of pairwise distinct preferences) of Euclidean preference profiles in $\mathbb{R}^2$ under norm $\ell_1$ or $\ell_{\infty}$ is 19.} Whatever the number of candidates, we prove that at most four distinct candidates can be ranked in last position of a two-dimensional Euclidean preference profile under norm $\ell_1$ or $\ell_\infty$, which generalizes the case of one-dimensional Euclidean preferences (for which it is well known that at most two candidates can be ranked last). \os{We generalize this result to $2^d$ (resp. $2d$) for $\ell_1$ (resp. $\ell_\infty$) for $d$-dimensional Euclidean preferences.} We also establish that the maximal size of a two-dimensional Euclidean preference profile on $m$ candidates under norm $\ell_1$ is in $\Theta(m^4)$, i.e., the same order of magnitude as under norm $\ell_2$. Finally, we provide a new proof that two-dimensional Euclidean preference  profiles under norm $\ell_2$ for four candidates can be characterized by three voter-maximal two-dimensional Euclidean profiles. This proof is a simpler alternative to that proposed by \cite{kamiya2011ranking}.
\end{abstract}

\section{Introduction}
\label{sec:intro}

The study of domain restrictions is a long standing research topic in modern social choice theory, dating back to the work of \citet{black1948rationale} on the single-peaked domain. As emphasized by \citet{barbera2020arrow}, Arrow already attached importance to studying the role of domain conditions
in determining the validity of his impossibility theorem, with two chapters of \emph{Social Choice and Individual Values} \citep{arrow1951social} devoted to this topic. For a survey about domain restrictions, the reader may refer to the works of Gaertner (\citeyear{gaertner2001domain,gaertner2002domain}) and \citet{barbera2013some}. For a computational perspective, one may refer to the survey recently conducted by \textcolor{black}{\cite{elkind_survey}}.

The spatial model of social choice is an important stream of research in this topic, pioneered by the works of \cite{hotelling1929stability} and \citet[chapter~8]{downs1957economic}. We focus here on \emph{Euclidean preferences}, where candidates and voters are viewed as points in $\mathbb{R}^d$, and the preferences of voters are decreasing with their Euclidean distance to the candidates. \os{Note that in this article, by abuse of language, we will use the expression \emph{Euclidean preferences in $\mathbb{R}^d$ under norm $\ell_k$} ($k\!\in\!\{1,2,\infty\}$) when measuring distances using norm $\ell_k$.} 

The most widely studied Euclidean preferences are those that are derived by measuring the distances with the $\ell_2$ norm \citep[see, e.g., the works of][]{Bennett1960,Euclidean_pref_bogomolnaia_laslier}, but the $\ell_1$ and $\ell_{\infty}$ norms have also been considered in the literature \citep[see, e.g., the work of][] {peters2017recognising}. \os{From a more operational point of view, spatial representations are used in particular in voting advice applications (e.g., Wahl-O-Mat in Germany, Smartvote in Switzerland, Vote Compass in the United States, and many others in multiple countries), i.e., online tools that helps the voter choose the candidate closest to her political stances, and actually often provides her a full ranking of candidates according to her answers to a survey on a range of policy statements. The answers are indeed converted into positions on different dimensions, each position reporting on the level of agreement on a particular policy statement. Norm $\ell_1$ is typically used when there are many dimensions, while norm $\ell_2$ is used when the number of dimensions is lower \citep{romero2022learning,isotalo2020}. For an overview of the topic of voting advice applications, the reader may refer to the survey by \cite{garzia2019voting}.}

In this work, we put a special emphasis on the case $d\!=\!2$ and we consider the norms $\ell_2$, $\ell_1$ and $\ell_{\infty}$. An original focus of our work is to try to identify the differences between the three norms. Hence, we are interested in the following questions:
\begin{itemize}
    \item Are there forbidden structures that make a profile not Euclidean, under some of the three norms?
    \item Given a set of $m$ candidates, what is the maximal size (in terms of the number of pairwise distinct preferences) of profiles that are Euclidean? 
    \item \textcolor{black}{Are there some differences or similarities between the norms in the expressivity of Euclidean preferences? Put another way, are there profiles that are Euclidean with one norm and not with another one?} 
\end{itemize}

We first show a \textcolor{black}{a structural result on \lunprofile and \linfprofile profiles in $\mathbb{R}^d$, namely that in an \linfprofile profile there are at most $2d$ candidates ranked last by at least one voter, while there are at most $2^d$ such candidates for an \lunprofile profile. While this result is not hard to prove, it is interesting in several aspects: first, it provides a strong difference with $\ell_2$, as we can easily build $\ell_2$-Euclidean profiles in the plane where each candidate is ranked last at least once\footnote{\textcolor{black}{Place for instance the candidates on a circle, and for each candidate $c$ place a voter $v$ which is the antipode of $c$ with respect to the center of the circle.}}. Second, it is an interesting generalization of the case of 1-dimensional Euclidean preferences, where it is well known that at most 2 candidates can be ranked last. Finally, while it is known that $\ell_1$ and $\ell_\infty$ are equivalent when $d\leq 2$ (meaning that, for $d\!\in\!\{1,2\}$, a profile is $\ell_1$-Euclidean if and only if it is $\ell_\infty$-Euclidean), an immediate  corollary of this structural result is that this equivalence does {\it not} hold for $d\geq 3$.}

\textcolor{black}{We then focus on the case $d=2$.} As it can easily be seen that every profile with 2 or 3 candidates is $\ell_1$-Euclidean (thus $\ell_\infty$-Euclidean) and $\ell_2$-Euclidean \os{\citep{Euclidean_pref_bogomolnaia_laslier}}, we focus in Section~\ref{sec:n=4} on the case of $m\!=\!4$ candidates. We first give an explicit example of a profile which is $\ell_1$-Euclidean but not $\ell_2$-Euclidean. We then focus on the maximal size \textcolor{black}{(in terms of the number of pairwise distinct preferences)} of profiles on 4 candidates that are Euclidean. It is known since the work of \citet{Bennett1960} that the maximal size is 18 for $\ell_2$. We show that this maximal size is exactly 19 for $\ell_1$. Then, we give a new proof that
a profile on 4 candidates is $\ell_2$-Euclidean if and only if it is a subprofile of one of three voter-maximal two-dimensional Euclidean profiles (involving 18 voters). \cite{kamiya2011ranking} proved the same result, but they rely on a link they establish with the problem of enumerating chambers of hyperplane arrangements \citep[for an introduction to the topic, see, e.g., the chapter of][]{stanley2004introduction}, while we use simpler and purely geometrical arguments. 

We then focus on the case $m\geq 5$. 
We focus on the the maximal size of profiles which are Euclidean. We show that, despite the strong restriction on the number of candidates ranked last by some voter \textcolor{black}{(at most $2d=4$)}, the maximal size of an $\ell_1$-Euclidean profile is $\Theta(m^4)$, i.e., of the same order of magnitude as for $\ell_2$ \textcolor{black}{\citep[as shown by][]{Bennett1960}}.

\paragraph{Organization of the article.}

We provide a brief overview of the related work in Section~\ref{sec:related}. We then give in Section~\ref{sec:def} some formal definitions, examples, and \textcolor{black}{focus on  the relation between} $\ell_1$ and $\ell_\infty$ norms for Euclidean profiles.
Some geometric properties of representations of $\ell_1$-Euclidean profiles are given in Section~\ref{sec:prop}. We highlight some differences with the $\ell_2$ norm, and derive some properties that will be useful for the results of subsequent sections.
Then we present in Section~\ref{sec:n=4} our results with $m=4$ candidates, while Section~\ref{sec:n>=5} deals with the general case, i.e., for an arbitrary number of candidates.
We conclude the article in Section~\ref{sec:conclu} by providing some research directions.

\section{Related work} 
\label{sec:related}

\paragraph{The one-dimensional case.} \cite{chen2015onedimensional} proved that one-dimensional Euclidean preference profiles cannot be characterized in terms of finitely many forbidden substructures, i.e., one cannot enumerate a finite set of substructures (also called \emph{obstructions}) such that a profile is one-dimensional Euclidean if and only if it contains none of the substructures in the list. It is nevertheless known that one-dimensional Euclidean preference profiles can be recognised in polynomial time in the number of voters and candidates, \textcolor{black}{as first shown by \citet{doignon1994polynomial}}, and then by \citet{knoblauch_2010} and \citet{DBLP:conf/sagt/ElkindF14}. Very recently, \cite{chen2021small} characterized one-dimensional Euclidean preference profiles with a small number of candidates and voters. In particular, they showed that any profile with at most 5 candidates is Euclidean if and only if it is single-peaked and single-crossing \textcolor{black}{(where both single-peaked and single-crossing profiles can be characterized via finitely many finite obstructions)}. They finally identified the smallest single-peaked and single-crossing profile which is not Euclidean. 

\paragraph{The multidimensional case.} \cite{Bennett1960} as well as \cite{Hays1961} proposed several methods to estimate the minimum value of $d$ to be able to embed a preference profile in a $d$-dimensional space, i.e., to associate a point in $\mathbb{R}^d$ to each voter and each candidate so that the voters' preferences are decreasing with the distance to the candidates. In particular, they established that the maximum cardinality of a $\ell_2$-Euclidean profile on $m$ candidates in dimension $d$ is equal to $\sum_{k=m-d}^m |s(m,k)|$, where $s(m,k)$ are the (unsigned) Stirling numbers of the first kind. The same result has been found by \cite{good1977stirling}. Later on,  \cite{Euclidean_pref_bogomolnaia_laslier} showed that to guarantee any profile of $n$ preferences on $m$ candidates to be $d$-Euclidean, it is necessary and sufficient to have $d$ between $\min \{ n-1, m-1 \}$ and $\min \{ n, m-1\}$. \textcolor{black}{Recently, an analogous result was shown by \cite{chen_manhattan_multiD} for preference profiles using an $\ell_1$ metric. More precisely, \cite{chen_manhattan_multiD} showed that each preference profile with $m$ alternatives and $n$ voters is $d$-Euclidean with respect to the norm $\ell_1$ whenever $d\!\geq\! \min\{n, m-1\}$. Also, they studied the smallest non-Euclidean profiles in case of $d=2$.} As mentioned earlier, \cite{kamiya2011ranking} studied the question of counting and enumerating voter-maximal $\ell_2$-Euclidean profiles in $\mathbb{R}^d$, according to the number $m$ of candidates. They provide a formula for the number of voter-maximal profiles if $d\!=\!m-2$, and they were able to enumerate them for $m\!=\!4$. Regarding the computational aspects, \cite{peters2017recognising} proved that the recognition problem (i.e., deciding whether or not a preference profile is $\ell_2$-Euclidean in dimension $d$) is NP-hard for $d>1$, and that some  Euclidean  preference  profiles  require exponentially  many  bits  in  order  to  specify  any  Euclidean embedding.

\section{Preliminaries}\label{sec:def}

\subsection{Euclidean preference profile}

We consider a (finite) set $V$ of $n$ voters, and a (finite) set $C$ of $m$ candidates. Each voter gives her preference \textcolor{black}{$>_v$} over the set of candidates as a ranking (total ranking, without tie). The set \textcolor{black}{$\{ >_{v_1}, >_{v_2}, \hdots, >_{v_n}\}$} of preferences of voters in $V$ on candidates in $C$ is \textcolor{black}{denoted by R, and the couple $\mathcal{P} = (C,R)$ is} called a preference profile. We write $c_1\!>_v\!c_2$ if voter $v$ prefers $c_1$ to $c_2$. For conciseness, we will often write the preference $c_i\!>_v\!c_j\!>_v\! \dots\!>_v\!c_k$ for a voter $v$ as $(c_i,c_j,\dots,c_k)$.

\begin{df}\label{def:euclid}
Let $d\!\geq\!1$ be an integer, and $\| \cdot \|_\ell$ a norm on $\mathbb{R}^d$. The profile $\mathcal{P}\!=\! (C,R)$ of \textcolor{black}{preferences of} $n$ voters over $m$ candidates is \textit{\lprofile}in $\mathbb{R}^d$ if there exists a mapping $f\!:\!V\!\cup C\! \rightarrow\!\mathbb{R}^d$ such that for each $v \in V$ and each $\{c_1, c_2\} \subseteq C$: 
$$ c_1 >_v c_2 \Rightarrow \| f(v) - f(c_1) \|_\ell < \|  f(v) - f(c_2) \|_\ell $$
\end{df}

The mapping is called an $\ell$-Euclidean representation of the profile in $\mathbb{R}^d$. Obviously, such a representation is not necessarily unique. A profile for which there exists a $\ell$-Euclidean representation in $\mathbb{R}^d$ is called $\ell$-Euclidean in $\mathbb{R}^d$.

We note that if two voters $v$ and $v'$ have the same preference, then $\mathcal{P} = (C,R)$ is $\ell$-Euclidean in $\mathbb{R}^d$ if and only if $(C, R \setminus \{>_{v'}\})$ is $\ell$-Euclidean in $\mathbb{R}^d$. So, without loss of generality, throughout the article we consider preference profiles where any pair of voters have different preferences. \os{We define the {\it size} (or cardinality) of a profile as the number of votes (or, equivalently, voters).}

We note also that, as preferences in $\mathcal{P}$ are strict orders, we could replace \textcolor{black}{the implication} in Definition~\ref{def:euclid} by  \textcolor{black}{an equivalence - hence, the last line of the definition becomes: } $$ c_1 >_v c_2 \Leftrightarrow \| f(v) - f(c_1) \|_\ell < \| f(v) - f(c_2) \|_\ell .$$

\subsection{Boundary hypersurfaces}

Consider a profile $\mathcal{P}\!=\!(C,R)$, an integer $d$ and a norm $\| \cdot \|_\ell$ on $\mathbb{R}^d$. Given a set of points $p_1, \hdots , p_m \in \mathbb{R}^d$, we now study the question of determining whether there exists a mapping $f \!:\!V\!\cup C\!\rightarrow\!\mathbb{R}^d$ such that: 
\begin{enumerate}
    \item for each $i \in \lbrace 1, \hdots , m \rbrace$, $f(c_i) = p_i$;
    \item $f$ is a $\ell$-Euclidean representation of $\mathcal{P}$ in $\mathbb{R}^d$.
\end{enumerate}
To build an \lprofile representation in $\mathbb{R}^d$, it is sufficient (and necessary) to find for each $v\!\in\!V$ a value $f(v)$ such that $f$ fulfills the condition in  Definition~\ref{def:euclid}. Let us define, for each $v$, the set $\zoneVotant$  of such possible values:
$$ \zoneVotant = \{f(v)\!\in\!\mathbb{R}^d: \forall \{c_1, c_2\}\!\subseteq\!C, c_1\!>_v\!c_2 \Rightarrow \| f(v) - f(c_1) \|_\ell < \| f(v) - f(c_2) \|_\ell \}$$
With this notation, the profile is \lprofile in $\mathbb{R}^d$ if and only if \textcolor{black}{there exists a mapping $f$ such that} for each $v$, $\zoneVotant$ is a non-empty set. The natural question is to characterise $\zoneVotant$ for each voter $v$. To this end, we introduce the following notion: 
\begin{df}
\label{def:euclidian_zones}
For a pair $\{c_1, c_2\}\!\subseteq\!C$ of candidates mapped in positions $f(c_1)$ and $f(c_2)$, the set of points $p\!\in\!\mathbb{R}^d$ such that $\| f(c_1) - p \|_\ell = \| f(c_2) - p \|_\ell $ is called \textit{the \hypSep~of $c_1$ and $c_2$} (or just \hyp in what follows), and is denoted by \textcolor{black}{$H_f(c_1, c_2)$}. We denote then by \textcolor{black}{$\zoneCunCdeux$} the set of points $p\!\in\!\mathbb{R}^d$ such that $\| f(c_1) - p\|_\ell < \| f(c_2) - p \|_\ell $, and by \textcolor{black}{$\zoneCdeuxCun$} the set of points $p\!\in\!\mathbb{R}^d$ such that $\| f(c_1) - p \|_\ell > \| f(c_2) - p \|_\ell $. 
\end{df}
It is easy to convince oneself that:
\begin{center}
$\zoneVotant = \displaystyle\bigcap_{c_i >_v c_j}\zoneCiCj$
\end{center}
Note that if $\ell\!=\!\ell_2$,   $\zoneVotant$ is convex \os{(as an intersection of half spaces bounded by a hyperplane)} for each $v\!\in\!V$. However, we will see later that $\zoneVotant$ is not necessarily convex if $\ell \!=\!\ell_1$ or $\ell\!=\!\ell_\infty$.  
\textcolor{black}{For conciseness, and only if no confusion is possible, we will omit the representation function $f$ in the notions introduced in Definition~\ref{def:euclidian_zones}. Thus, we will write $H(c_i, c_j)$, resp. $D(c_i, c_j)$ and $D(v)$, instead of $H_f(c_i, c_j)$, resp. $D_f(c_i, c_j)$ and $D_f(v)$.}
\\ \\
\noindent
As $D_f(c_1,c_2)$ depends only on the positions of $c_1$ and $c_2$ in $\mathbb{R}^d$, and hence $D_f(v)$ on the positions of $c_1,\ldots,c_m$, the definition of \lprofile profiles in $\mathbb{R}^d$ can be reformulated as follows:
\begin{prop}
Let $d\!\geq\!1$ be an integer, and $\| \cdot \|_\ell$ be a norm on $\mathbb{R}^d$. The profile $\mathcal{P} = (C,R)$ of \textcolor{black}{preferences of} $n$ voters over $m$ candidates is \textit{\lprofile}in $\mathbb{R}^d$ if and only if there exists a mapping $f: C \rightarrow \mathbb{R}^d$ such that $\zoneVotant$ is non-empty for each $v\!\in\!V$.
\end{prop}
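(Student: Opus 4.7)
The plan is to prove the equivalence by unpacking the definitions of $\ell$-Euclidean profile and of $D_f(v)$, observing that $D_f(v)$ depends only on the values $f(c_1), \ldots, f(c_m)$ of $f$ on the candidates (and on the preference $>_v$). So once $f$ is fixed on $C$, the set $D_f(v)$ is well defined for every voter $v$, and a point $p \in D_f(v)$ is, by the very definition recalled just before the statement, exactly a location satisfying $\|p - f(c_1)\|_\ell < \|p - f(c_2)\|_\ell$ for every pair $c_1 >_v c_2$.

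For the forward direction, I would assume that $\mathcal{P}$ is $\ell$-Euclidean in $\mathbb{R}^d$ and pick an $\ell$-Euclidean representation $f : V \cup C \to \mathbb{R}^d$. I would then consider its restriction $f|_C : C \to \mathbb{R}^d$. By Definition~\ref{def:euclid}, for each voter $v \in V$ the point $f(v)$ satisfies $\|f(v) - f(c_1)\|_\ell < \|f(v) - f(c_2)\|_\ell$ whenever $c_1 >_v c_2$, which is precisely the defining condition of membership in $D_{f|_C}(v)$. Hence $f(v) \in D_{f|_C}(v)$ and in particular $D_{f|_C}(v) \neq \emptyset$, giving the required mapping on $C$ alone.

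For the backward direction, I would start from a mapping $f : C \to \mathbb{R}^d$ such that $D_f(v) \neq \emptyset$ for every $v \in V$. Using the axiom of choice (trivial here since $V$ is finite), for each $v$ I would pick some $p_v \in D_f(v)$ and extend $f$ to $V \cup C$ by setting $f(v) := p_v$. By the definition of $D_f(v)$, the extended mapping $f$ satisfies $\|f(v) - f(c_1)\|_\ell < \|f(v) - f(c_2)\|_\ell$ for each $v$ and each pair $c_1 >_v c_2$, so it is an $\ell$-Euclidean representation of $\mathcal{P}$ in $\mathbb{R}^d$, concluding the proof.

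There is essentially no obstacle; the only subtle point is the notational one of checking that $D_f(v)$ makes sense as soon as $f$ is only defined on $C$, which is immediate since Definition~\ref{def:euclidian_zones} only uses the positions $f(c_i)$ and $f(c_j)$ of candidates. The statement is really a convenient reformulation allowing one to reduce the search for an $\ell$-Euclidean representation to the search for candidate positions realizing non-empty voter zones.
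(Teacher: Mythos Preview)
Your proof is correct and follows exactly the reasoning the paper sketches: the paper does not give a formal proof but simply notes, just before the proposition, that since $D_f(c_1,c_2)$ depends only on the positions of $c_1$ and $c_2$, the set $D_f(v)$ depends only on the candidate positions, so the definition can be reformulated in this way. Your detailed verification of both directions is a faithful expansion of this observation.
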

\noindent 
\textcolor{black}{Given a representation function $f$ and a voter $v$, we will call the set $D_f(v)$ \textit{an area}, as geometrically, it corresponds to an area of the plane. Thus, $D_f(v)$ is an area of preference ranking $>_f$ with respect to the representation $f$. By abuse of notation, the terms of area and (its corresponding) preference ranking will be used interchangebly.}
\begin{example}\label{ex:n=3ell2}
Consider a preference profile $\mathcal{P}$ with 3 candidates $\{c_1,c_2,c_3\}$ and the 6 possible preferences on these 3 candidates. Figure~\ref{fig:l2_c3} shows a $\ell_2$-Euclidean representation of $\mathcal{P}$ in $\mathbb{R}^2$: the 3 hypersurfaces $H(c_1,c_2)$, $H(c_1,c_3)$ and $H(c_2,c_3)$, as well as the 6 non-empty areas $D(v)$ (with the corresponding preference written in the area).

\begin{figure}[tb]
    \centering
    \begin{tikzpicture}
   \begin{axis}
   [axis x line=bottom,axis y line = left, 
   grid = major,
   axis equal image,
   ytick = {1,2,3,4,5,6,7,8,9,10,11},
   xtick = {1,2,3,4,5,6,7,8,9,10,11},
   xmin=0,
   xmax=11,
   ymin=0,
   ymax=11,
   nodes near coords,
   point meta=explicit symbolic]
   \addplot+[only marks] coordinates{(3,3)[$c_1$] (8,6)[$c_2$] (6,2)[$c_3$]};
    \addplot+[mark = none, blue, thick] coordinates{(1,12) (5.5,4.5) (10,-3)} node[xshift = -0.5cm, yshift = 2cm] {$H(c_1, c_2)$};
    
     \addplot+[mark = none, blue, thick] coordinates{(7.5,11.5) (4.5,2.5) (3.5,-0.5)} node[xshift = -0.5cm, yshift = 1cm] {$H(c_1, c_3)$};
     
     \addplot+[mark = none, blue, thick] coordinates{(-1,8) (7,4) (11,2)} node[xshift = -0.75cm, yshift = 1cm] {$H(c_2, c_3)$};
     
     \node[draw] at (axis cs:2,4.5) {$c_1 > c_3 > c_2 $};
     \node[draw] at (axis cs:2,8) {$c_1 > c_2 > c_3 $};
     \node[draw] at (axis cs:5,10) {$c_2 > c_1 > c_3 $};
     \node[draw] at (axis cs:9,8) {$c_2 > c_3 > c_1 $};
     \node[draw] at (axis cs:9,2) {$c_3 > c_2 > c_1 $};
      \node[draw] at (axis cs:6,1) {$c_3 > c_1 > c_2 $};
    
    \end{axis}
    \end{tikzpicture}
    \caption{An $\ell_2$-representation of the complete profile on 3 candidates.}
    \label{fig:l2_c3}
\end{figure}
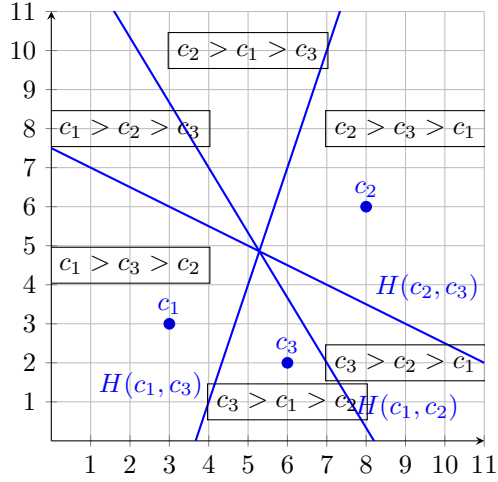

\end{example}

We now give a profile on 4 candidates which is {\it not} $\ell_2$-Euclidean in $\mathbb{R}^2$. As we will see, the fact that it is not Euclidean is proved by geometric arguments. 

\begin{example}\label{ex:n=4ell2}
Let us consider the following profile $\mathcal{P}\!=\!(C,R)$ with 9 voters and 4 candidates, where $R_v$ is the preference of voter $v$: 
\begin{align*}
    R_1: (c_4,c_3,c_1,c_2) & \hspace{0.5cm} R_2: (c_3,c_4,c_1,c_2) \\
    R_3: (c_4,c_3,c_2,c_1) & \hspace{0.5cm} R_4: (c_3,c_4,c_2,c_1) \\
    R_5: (c_2,c_1,c_4,c_3) & \hspace{0.5cm} R_6: (c_2,c_1,c_3,c_4) \\
    R_7: (c_1,c_2,c_4,c_3) & \hspace{0.5cm} R_8: (c_1,c_2,c_3,c_4) \\
    R_9: (c_2,c_3,c_1,c_4) \\
\end{align*}

Let us show that this profile is not $\ell_2$-Euclidean in $\mathbb{R}^2$ (while we will see later that it is $\ell_1$-Euclidean in $\mathbb{R}^2$). By contradiction, assume that a $\ell_2$-Euclidean representation in $\mathbb{R}^2$ exists. The points $c_1, c_2, c_3$ form necessarily a (non-degenerate) triangle, as 5 different rankings over $\{ c_1, c_2, c_3\}$ are present in the profile, and at most 4 can be represented if $c_1$, $c_2$, $c_3$ are aligned in $\mathbb{R}^2$. Figure~\ref{fig:ex_intro_l2} illustrates the different preference areas obtained from candidates $c_1$, $c_2$, $c_3$ forming a triangle.

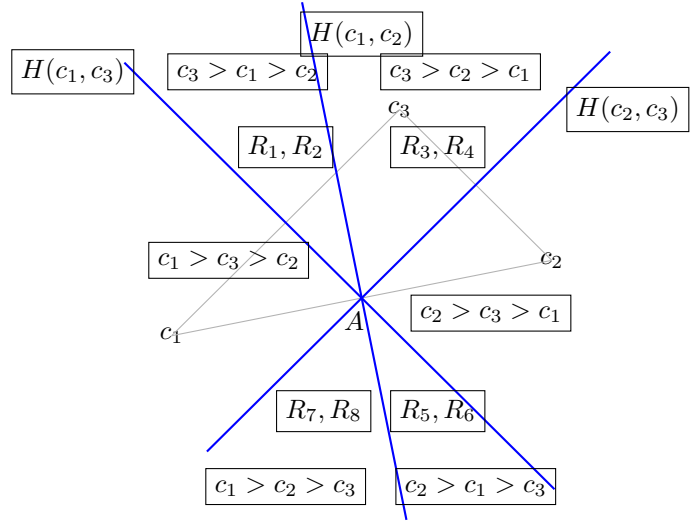
\begin{figure}[tb]
    \centering
    \begin{tikzpicture}
        \node (c1) at (0,0) {$c_1$};
        \node (c2) at (5,1) {$c_2$};
        \node (c3) at (3,3) {$c_3$}; 
        
        \draw[black!30] (0,0) -- (5,1) coordinate[midway] (M1);
        \draw[black!30] (0,0) -- (3,3) coordinate[midway] (M3);
        \draw[black!30] (3,3) -- (5,1) coordinate[midway] (M2);
        
        \draw [blue,thick] ($(M1)!4cm!270:(c1)$) -- ($(M1)!3cm!90:(c1)$);
        \draw [blue,thick] ($(M2)!5cm!270:(c2)$) -- ($(M2)!2.5cm!90:(c2)$);
        \draw [blue,thick] ($(M3)!5cm!270:(c3)$) -- ($(M3)!3cm!90:(c3)$);
        
        \node at (2.4,0.2) {$A$};
        \node[draw] at (0.75,1) {$c_1 > c_3 >c_2$};
        \node[draw] at (-1.3,3.5) {$H(c_1, c_3)$};
         \node [draw] at (1, 3.5) {$c_3 > c_1 > c_2$};
         \node[draw] at (2.5,4) {$H(c_1, c_2)$};
         \node [draw] at (3.8, 3.5){$c_3 > c_2 >c_1$};
         \node [draw] at (4.2, 0.3) {$c_2 > c_3 > c_1$};
         \node [draw] at (4, -2) {$c_2 > c_1 > c_3$};
          \node [draw] at (1.5, -2) {$c_1 > c_2 > c_3$}; 
          \node[draw] at (6,3){$H(c_2, c_3)$};
          
         \node[draw] at (1.5,2.5){$R_1, R_2$};
         \node[draw] at (3.5, 2.5){$R_3, R_4$};
         \node[draw] at (3.5,-1){$R_5, R_6$};
          \node[draw] at (2,-1){$R_7, R_8$};
    \end{tikzpicture}    
    \caption{The different preference areas obtained from candidates $c_1$, $c_2$ and $c_3$ forming a triangle. Each preference $R_v$ is a subset of an area, the precise contours of which depends on the position of $c_4$ in $\mathbb{R}^2$.}
    \label{fig:ex_intro_l2}
\end{figure}
\noindent
Note that for each $i\!\in\!\{ 1,2,3,4\}$, we obtain $R_{2i -1}$ from $R_{2i}$\footnote{\textcolor{black}{These pairs correspond to rows in the display of the profile given at the beginning of example}} by swapping $c_3$ and $c_4$. Thus, $H(c_3, c_4)$ has to go through the area $c_3 > c_1 >c_2$ to separate $R_1$ and $R_2$, through the area $c_3 > c_2 >c_1$ to separate $R_3$ and $R_4$ and finally through the areas $c_2 > c_1 >c_3$ and $c_1 >c_2 >c_3$ to separate $R_5$ and $R_6$, and $R_7$ and $R_8$ (see Figure \ref{fig:ex_intro_l2} for more clarity). This is not possible, as any straight line can cross at most 3 of these 4 areas. \textcolor{black}{Indeed, if a straight line crosses both the area containing $\{R_7,R_8\}$ and the one containing $\{R_5,R_6\}$, then it must intersect $H(c_1,c_2)$ below point $A$. Similarly, if it crosses both the area containing $\{R_1,R_2\}$ and the one containing $\{R_3,R_4\}$, then it must intersect $H(c_1,c_2)$ above point $A$. Thus, to cross the 4 areas, it must intersect $H(c_1,c_2)$ twice, a contradiction.}

Hence, no $\ell_2$-Euclidean representation of $\mathcal{P}$ exists in $\mathbb{R}^2$. 
\qed
\end{example}

\subsection{\textcolor{black}{Relation} between norms $\ell_1$ and $\ell_\infty$}

We consider here the case where $\ell = \ell_1$ or $\ell = \ell_\infty$. Given $\delta \geq 0$, we denote by $\mathcal{S}^\ell_\delta(p)$ the $\ell$-sphere of radius $\delta$ centered in $p \in \mathbb{R}^d$. Formally: 
\begin{equation}\nonumber \mathcal{S}^\ell_\delta (p) = \{ q \in \mathbb{R}^d: \| p - q \|_\ell = \delta \} \end{equation}
With this notation, we characterise $H(c_1, c_2)$ as: 
\begin{equation}\label{eq:1} H(c_1, c_2) = \bigcup_{\delta \geq 0} (\mathcal{S}^\ell_\delta(f(c_1)) \cap \mathcal{S}^\ell_\delta(f(c_2)) ) \end{equation}
\textcolor{black}{For $d=2$, for all $\delta\!\geq\!0$ and $p\!\in\! \mathbb{R}^2$, the spheres $\mathcal{S}^{\ell_1}_\delta(p)$ and $\mathcal{S}^{\ell_\infty}_{\delta/\sqrt{2}}(p)$ are homothetic via the rotation of $45$° (see Figure~\ref{fig:spheres_l1_l2}).  Together with the characterisation of $H(c_1, c_2)$ in Equation~(\ref{eq:1}), this yields the following observation, already noted by~\cite{LeeWongSIAM}.} 
\begin{obs}[\citealp{LeeWongSIAM}]\label{obs:l1linf}
A preference profile  is \lunprofile in $\mathbb{R}^2$ if and only if it is \linfprofile in $\mathbb{R}^2$. 
\end{obs}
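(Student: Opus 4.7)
The plan is to exhibit an explicit linear isomorphism of $\mathbb{R}^2$ that converts the $\ell_1$ norm into a positive scalar multiple of the $\ell_\infty$ norm; since the $\ell$-Euclidean condition in Definition~\ref{def:euclid} is only a collection of strict inequalities between distances, scaling all distances by the same positive constant preserves it, and applying the isomorphism to every point of a representation yields a representation in the other norm.

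More precisely, let $\rho:\mathbb{R}^2\to\mathbb{R}^2$ be the rotation by $45°$, i.e., $\rho(a,b)=\tfrac{1}{\sqrt{2}}(a-b,\,a+b)$. First I would verify the identity
\[
\|\rho(p)\|_\infty \;=\; \tfrac{1}{\sqrt{2}}\,\|p\|_1 \qquad \forall p\in\mathbb{R}^2,
\]
which is exactly the homothety between $\mathcal{S}^{\ell_1}_\delta(0)$ and $\mathcal{S}^{\ell_\infty}_{\delta/\sqrt{2}}(0)$ mentioned right before the statement. This reduces to the elementary fact that $\max(|a-b|,|a+b|)=|a|+|b|$, which one checks by splitting on the signs of $a$ and $b$. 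By linearity of $\rho$, the identity extends to any pair of points: $\|\rho(p)-\rho(q)\|_\infty=\tfrac{1}{\sqrt{2}}\|p-q\|_1$.

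Now suppose $f:V\cup C\to\mathbb{R}^2$ is an $\ell_1$-Euclidean representation of a profile $\mathcal{P}=(C,R)$. I claim that $g:=\rho\circ f$ is an $\ell_\infty$-Euclidean representation of the same profile. Indeed, for any $v\in V$ and $\{c_1,c_2\}\subseteq C$ with $c_1>_v c_2$, we have $\|f(v)-f(c_1)\|_1<\|f(v)-f(c_2)\|_1$, hence after multiplying by the positive constant $\tfrac{1}{\sqrt{2}}$ and invoking the identity above,
\[
\|g(v)-g(c_1)\|_\infty \;<\; \|g(v)-g(c_2)\|_\infty.
\]
Conversely, if $g$ is an $\ell_\infty$-representation, then $f:=\rho^{-1}\circ g$ (applying the rotation by $-45°$) is an $\ell_1$-representation by the exact same argument, using that $\rho^{-1}$ satisfies $\|\rho^{-1}(p)\|_1=\sqrt{2}\,\|p\|_\infty$.

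There is no real obstacle: the whole content sits in choosing the right linear map and observing that the definition of $\ell$-Euclidean profiles only involves strict inequalities among distances, so any multiplicative distortion of the norm by a positive scalar leaves the class of representable profiles unchanged. I would only take care to emphasise that this argument is specific to $d=2$ (rotation by $45°$ turns the $\ell_1$-ball, a square with axis-aligned diagonals, into the $\ell_\infty$-ball, an axis-aligned square), which foreshadows the failure of the equivalence in dimensions $d\geq 3$ announced in the introduction.
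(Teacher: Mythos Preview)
Your proof is correct and follows essentially the same idea as the paper: the paper simply observes that the $\ell_1$-sphere and the $\ell_\infty$-sphere in $\mathbb{R}^2$ are homothetic via a rotation by $45^{\circ}$ and cites \cite{LeeWongSIAM}, while you make this explicit by writing down the rotation $\rho$, verifying the norm identity $\|\rho(p)\|_\infty=\tfrac{1}{\sqrt{2}}\|p\|_1$, and checking that composing a representation with $\rho$ (or $\rho^{-1}$) preserves all the strict distance inequalities in Definition~\ref{def:euclid}. There is no substantive difference in approach; you have just supplied the details the paper leaves implicit.
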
 

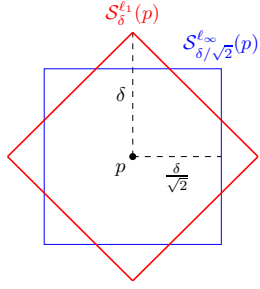
\begin{figure}[tb]
    \centering
    \scalebox{0.75}{\begin{tikzpicture}
   \begin{axis}
   [axis equal,hide axis,axis x line=bottom,axis y line = left, 
   xmin=2,
   xmax=7.5,
   ymin=2,
   ymax=7.5,
   nodes near coords,
   point meta=explicit symbolic]
   \draw[fill] (axis cs:4.5,4.5) circle [radius=1.5pt];
   \node[below left] at (axis cs:4.5,4.5) {$p$};
   \node[above,red] at (axis cs:4.5,6.62132) {$\mathcal{S}^{\ell_1}_\delta(p)$};
   \node[above,blue] at (axis cs:6,6) {$\mathcal{S}^{\ell_\infty}_{\delta/\sqrt{2}}(p)$};
   \draw[-, dashed](axis cs:4.5,4.5)--(axis cs:4.5,6.62132) node[midway, left] {$\delta$};
   \draw[-, dashed](axis cs:4.5,4.5)--(axis cs:6,4.5) node[midway, below] {$\frac{\delta}{\sqrt{2}}$};
    \draw[-, blue] (axis cs:3,3) rectangle (axis cs:6,6);
    \addplot+[mark = none, red, thick] coordinates{(4.5,6.62132) (6.62132,4.5)};
    \addplot+[mark = none, red, thick] coordinates{(6.62132,4.5) (4.5,2.37868)};
    \addplot+[mark = none, red, thick] coordinates{(4.5,2.37868) (2.37868,4.5)};
    \addplot+[mark = none, red, thick] coordinates{(2.37868,4.5) (4.5,6.62132)};
    \end{axis}
    \end{tikzpicture}}
    \caption{\textcolor{black}{For $d=2$,} the spheres $\mathcal{S}^{\ell_1}_\delta(p)$ and $\mathcal{S}^{\ell_\infty}_{\frac{\sqrt{2}}{2}\delta}(p)$ are homothetic via the rotation of $45$°.}
    \label{fig:spheres_l1_l2}
\end{figure}

\textcolor{black}{We now show that this equivalence is {\it not} true for $d\!\geq\!3$. This is actually a corollary of the following proposition, which provides a structural property of \linfprofile and \lunprofile profiles in $\mathbb{R}^d$.}

\begin{prop}\label{prop:2d}
\textcolor{black}{In an \linfprofile profile in $\mathbb{R}^d$, at most $2d$ candidates are ranked last by at least one voter. In an \lunprofile profile in $\mathbb{R}^d$, at most $2^d$ candidates are ranked last by at least one voter.  These bounds are tight for all $d$.}
\end{prop}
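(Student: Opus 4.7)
The plan is to handle the two norms separately via a common idea: show that any candidate ranked last must be extremal in a restricted combinatorial sense, then count the number of extremal positions.

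For the $\ell_\infty$ part, I would start from the observation that $\|v - c\|_\infty = \max_{i \in \{1,\dots,d\}} |v_i - c_i|$. Suppose $c$ is ranked last by some voter $v$, i.e.\ $\|v-c\|_\infty > \|v-c'\|_\infty$ for every $c'\neq c$. Let $i$ be a coordinate achieving the maximum $|v_i-c_i|=\|v-c\|_\infty$. There are two cases according to the sign of $v_i - c_i$: if $v_i \geq c_i$, then for every $c'$ we need $|v_i - c'_i|<v_i - c_i$, which forces $c'_i > c_i$; symmetrically, if $v_i \leq c_i$, then $c'_i < c_i$ for every $c'\neq c$. In either case, $c$ is the \emph{unique} extremal candidate (min or max) in coordinate $i$. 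Since there are $d$ coordinates and at most two extremals per coordinate, at most $2d$ distinct candidates can ever appear in last position.

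For the $\ell_1$ part, the key identity is the piecewise-linear decomposition $\|v-c\|_1 = \max_{s\in\{-1,+1\}^d}\, s\cdot(v-c)$. Assume $c$ is ranked last by $v$ and let $s^*$ be a sign vector achieving this maximum, so that $\|v-c\|_1 = s^* \cdot v - s^*\cdot c$. For any other candidate $c'$, we have $\|v-c'\|_1 \geq s^*\cdot v - s^*\cdot c'$ (the max is at least one term), hence $s^*\cdot v - s^*\cdot c > s^*\cdot v - s^*\cdot c'$, which gives $s^*\cdot c' > s^*\cdot c$. Thus $c$ is the strict minimizer of the linear functional $c \mapsto s^*\cdot c$ on the candidate set. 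Each of the $2^d$ sign vectors admits at most one strict minimizer, so at most $2^d$ candidates can be ranked last.

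The main conceptual obstacle is identifying these ``extremality'' witnesses and verifying that the inequalities are strict; once the decomposition of $\|v-c\|_\ell$ as a max of linear forms is in hand, counting is routine. For tightness, I would exhibit explicit profiles. For $\ell_\infty$, place $2d$ candidates at $\pm e_1,\dots,\pm e_d$ and, for each $j$ and each sign $\sigma\in\{-1,+1\}$, a voter at $R\sigma e_j$ with $R$ large: a direct computation shows $-\sigma e_j$ is strictly farther than any other candidate, so all $2d$ candidates occur in last position. For $\ell_1$, place $2^d$ candidates at the vertices $\{-1,+1\}^d$ and, for each $s\in\{-1,+1\}^d$, a voter at $Rs$: then $\|Rs - c\|_1$ is strictly maximized at $c=-s$, so every vertex is ranked last by exactly one voter. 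These two constructions match the upper bounds for every $d$.
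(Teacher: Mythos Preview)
Your approach is essentially the same as the paper's: both arguments identify, for each norm, a family of linear functionals (the $2d$ signed coordinate projections for $\ell_\infty$, the $2^d$ sign-vector inner products for $\ell_1$) and show that a last-ranked candidate must be the strict extremizer of one of them. The paper phrases this contrapositively (a non-extremal candidate is never last) while you argue directly, but the content is identical.

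One small point on the tightness constructions: your configurations correctly make the intended last candidate \emph{strictly} farthest, but for $d\geq 2$ they leave ties among the remaining candidates (e.g., for the voter at $Re_1$ under $\ell_\infty$, all $\pm e_j$ with $j\neq 1$ are at distance exactly $R$; for the voter at $Rs$ under $\ell_1$, all hypercube vertices agreeing with $s$ in the same number of coordinates are equidistant). Since Definition~\ref{def:euclid} requires strict preferences throughout, these are not yet valid Euclidean profiles. The paper's constructions have the same issue and resolve it by a small generic perturbation of the candidate positions (noted in footnotes); you should add the same remark.
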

\begin{proof}
\textcolor{black}{Let us first consider an \linfprofile profile $\mathcal{P} = (V,C)$, and a corresponding mapping $f$. We denote by $f_i(x)$ the position of candidate/voter $x$ on the $i^{th}$ coordinate. For $i=1,\dots,d$, let us denote by $c_{j_i}^m$ and $c_{j_i}^M$ the candidates that have minimal and maximal $i^{th}$ coordinate. There are at most $2d$ of them (some candidates may be extremal on several coordinates). Take a candidate $c$ which is {\it not} among these extremal candidates, and take a voter $v$. We show that $c$ cannot be ranked last by $v$. Let $i$ be such that $\| f(c) - f(v) \|_{\ell_\infty}=|f_i(c)-f_i(v)|$.} 

\textcolor{black}{If $f_i(c)\geq f_i(v)$, then $$\| f(c) - f(v) \|_{\ell_\infty}=f_i(c)-f_i(v)\leq f_i(c_{j_i}^M)-f_i(v)\leq \| f(c_{j_i}^M) - f(v) \|_{\ell_\infty}.$$
As the two distances cannot be the same (no tie in the preferences), $c$ is ranked before $c_{j_i}^M$ by $v$.} 

\textcolor{black}{If $f_i(c)< f_i(v)$, then $$\| f(c) - f(v) \|_{\ell_\infty}=f_i(v)-f_i(c)\leq f_i(v)-f_i(c_{j_i}^m)\leq \| f(c_{j_i}^m) - f(v) \|_{\ell_\infty}.$$
Again, $c$ is ranked before $c_{j_i}^m$ by $v$. In both cases, $c$ is not ranked last.
}

\textcolor{black}{To show the tightness of the bound, we consider a profile on $2d$ candidates where $f(c_{2i-1})$ is $-1$ on coordinate $i$ and 0 on all other coordinates, and $f(c_{2i})=-f(c_{2i-1})$. There are also $2d$ voters, with $f(v_{i})=f(c_i)$ for $i\!=\!1,\ldots,2d$. Then it is easy to see that $c_{2i-1}$ is ranked last by $v_{2i}$, and $c_{2i}$ is ranked last by $v_{2i-1}$.\footnote{\textcolor{black}{Note that, defined like this, there are some ties in the distances among the candidates that are not ranked last, but these can be easily broken by slightly moving the positions, for instance moving $c_{2i-1}$ (resp. $c_{2i}$) by $-\epsilon_i$ (resp. $+\epsilon_i$) on the $i^{th}$ coordinate, with $\epsilon_i\neq \epsilon_j$ for $i\neq j$.}}}\\

\textcolor{black}{Let us now focus on \lunprofile profiles. For each vector $u$ in $\{-1,1\}^d$, let $c_u$ be a candidate which maximizes $u \cdot f(c)=\sum_{i=1}^d u_i \cdot f_i(c)$. As previously, consider a candidate $c$ which is not among these (at most) $2^d$ extreme candidates, and take a voter $v$. We show that $c$ cannot be ranked last by $v$. By definition, $\| f(c) - f(v) \|_{\ell_1}=\sum_{i=1}^d |f_i(c)-f_i(v)|$. Define the vector $u$ as $u_i=1$ if $f_i(c)\geq f_i(v)$ and $u_i=-1$ otherwise. Then:    
\begin{align*}
\| f(c) - f(v) \|_{\ell_1} & =  \textstyle\sum_{i=1}^d u_i \cdot (f_i(c)-f_i(v)) =u \cdot f(c)-u \cdot f(v) \\ & \leq u  \cdot f(c_u)-u \cdot f(v)  \leq \textstyle\sum_{i=1}^d |f_i(c_u)-f_i(v)| =\| c_u - v \|_{\ell_1}.
\end{align*}
As the two distances must be different (no tie in the preferences), $c$ is not ranked last by $v$.} 

\textcolor{black}{To show the tightness, let us consider the following profile on $2^d$ candidates and $2^d$ voters. For each vector $u\in \{-1,1\}^d$, we define a candidate $c_u$ with $f(c_u)=u$, and a voter $v_u$ with $f(v_u)=-u$. Then we have $\| f(c_u) - f(v_u) \|_{\ell_1}=2d$, while if $u\neq u'$ we have $\| f(c_{u'}) - f(v_u) \|_{\ell_1}\leq 2(d-1)$ (as $f_i(c_{u'})=f_i(v_u)$ on at least one coordinate $i$). So $c_u$ is ranked last by $v_u$\footnote{\textcolor{black}{As previously, the ties between distances among candidates that are not ranked last can be removed by slightly moving the positions.}}. 
}
\end{proof}

From now on, and throughout the remainder of the article, we fix $d\!=\!2$, i.e., we consider a representation of the preferences in the plane. For the sake of brevity, we omit to mention ``in $\mathbb{R}^2$'' in the following. Given Observation~\ref{obs:l1linf}, we can use $\ell_1$ or $\ell_\infty$ indifferently. We choose to use $\ell_1$.

\section{Properties of hypersurfaces under $\ell_1$ in the plane}\label{sec:prop}

We  give in this section some properties of (boundary) \hyps~under $\ell_1$. These properties will be useful to obtain the results on $\ell_1$-Euclidean profiles in Sections~\ref{sec:n=4} and~\ref{sec:n>=5}. They have also their own interest, giving some geometric insights on the differences between Euclidean profiles under $\ell_1$ and $\ell_2$. 

{\it Note.} For ease of notation, when the position of candidates are fixed, $c_i$ will denote both the candidate and her position in $\mathbb{R}^2$ (i.e., $f(c_i)$ in the above notation).

\subsection{Types of \hyps}\label{subsec:typehyper}

We first focus on the description of the \hyps separating two points $c_1$ and $c_2$. In contrast to the $\ell_2$ metrics where the \hyp is always a straight line (if $d\!=\!2$), the shape of this \hyp depends on the relative positions of $c_1$ and $c_2$ when using the $\ell_1$ metrics, as we will now show. We denote by $(x_1,y_1)$ (resp. $(x_2, y_2)$) the coordinates of $c_1$ (resp. $c_2$), and we use the notations  $\Delta x = \vert x_1 - x_2 \vert $ and $\Delta y = \vert y_1 - y_2 \vert $. \\ \\
\noindent
\begin{enumerate}
    \item Let us first consider the case $\Delta x\!\neq\! \Delta y$, with $\Delta x\!>\!0$ and $\Delta y\!>\!0$. This case is illustrated in Figure~\ref{fig:hyperplan_l1_base}. 
\begin{figure}[tb]
    \centering
    \begin{tikzpicture}
   \begin{axis}
   [axis x line=bottom,axis y line = left, 
   grid = major,
   axis equal image,
   ytick = {1,2,3,4,5,6,7,8,9},
   xtick = {1,2,3,4,5,6,7,8,9},
   xmin=0,
   xmax=10,
   ymin=0,
   ymax=10,
   nodes near coords,
   point meta=explicit symbolic]
   \addplot+[only marks] coordinates{(3,3)[$c_1$] (7,5)[$c_2$]};
   \draw[<->, thick](axis cs:1,3)--(axis cs:1,5) node[midway, right] {$\Delta y$};
   \draw[<->, thick](axis cs:3,7)--(axis cs:7,7) node[midway, above] {$\Delta x$};
    \addplot+[only marks, red] coordinates{(4,5)[$M_1$] (6,3)[$M_2$] (5,4)[$C$]};
    \draw[-, blue] (axis cs:3,3) rectangle (axis cs:7,5);
    \addplot+[mark = none, red, thick] coordinates{(4,9) (4,5) (6,3) (6,0)};
    \end{axis}
    \end{tikzpicture}
    \caption{A (boundary) \hyp separating $c_1$ and $c_2$: $\Delta x \neq \Delta y, \Delta x, \Delta y > 0$.}
    \label{fig:hyperplan_l1_base}
\end{figure}
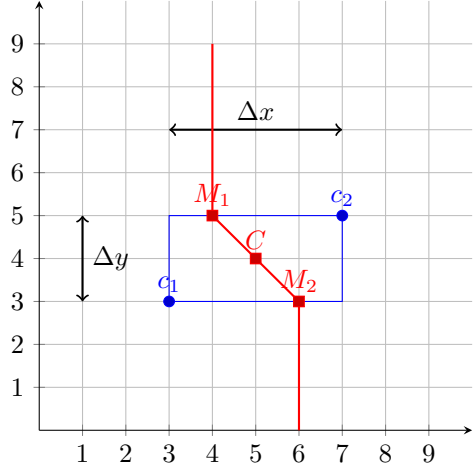
\noindent 
Without loss of generality, assume that $\Delta x > \Delta y$ (the case $\Delta x < \Delta y$ can be treated analogously). The positions $c_1$ and $c_2$ can be seen as two opposite vertices of a rectangle (see Fig. \ref{fig:hyperplan_l1_base}). \\
\noindent
\begin{itemize}
    \item By definition of the $\ell_1$ metrics, there are two points $M_1, M_2$ on the rectangle boundary that belong to $H(c_1, c_2)$: these are the points that are at distance $\frac{\Delta x + \Delta y}{2}$ from  both points $c_1$ and $c_2$. Points $M_1$ and $M_2$ are symmetric with respect to the rectangle center, and we observe that the segment $[M_1, M_2] $ belongs to $H(c_1, c_2)$ - in fact, we observe that: 
$$ [M_1, M_2] = \mathcal{S}^{\ell_1}_{\frac{\Delta x + \Delta y}{2}}(c_1) \cap \mathcal{S}^{\ell_1}_{\frac{\Delta x + \Delta y}{2}}(c_2) $$
   \item The half-line $\{(x_{M_1}, y): y \geq y_{M_1} \}$ also belongs to $H(c_1, c_2)$, where $x_{M_1}$ and $y_{M_1}$ denote the coordinates of $M_1$, as for $y\geq y_{M_1}$, each point $(x_{M_1},y)$ is at distance $\frac{\Delta x+\Delta y }{2}+y-y_{M_1}$ both from $c_1$ and $c_2$.
   \item Similarly, the half-line $\{(x_{M_2}, y) \vert y \leq y_{M_2} \}$ belongs to $H(c_1, c_2)$.
\end{itemize}
\noindent
To sum it up, we have identified three parts of $H(c_1, c_2)$: two vertical half-lines connected by a diagonal segment. We can easily prove that for each $z \in \mathbb{R}^2$ that does not belong to one of these parts, we have $\| z - c_1 \|_{\ell_1} \neq \| z - c_2 \|_{\ell_1}$. More precisely, the points to the left-hand side of the \hyp are closer to $c_1$, while the ones on the right-hand side are closer to $c_2$.  
\item Let us now consider the case $\Delta x = \Delta y >0$. This case is illustrated in Figure~\ref{fig:hyperplan_l1_carre}.
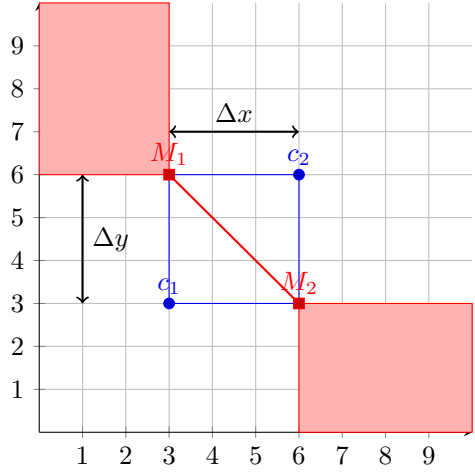
\begin{figure}[tb]
    \centering
    \begin{tikzpicture}
   \begin{axis}
   [axis x line=bottom,axis y line = left, 
   grid = major,
   axis equal image,
   ytick = {1,2,3,4,5,6,7,8,9},
   xtick = {1,2,3,4,5,6,7,8,9},
   xmin=0,
   xmax=10,
   ymin=0,
   ymax=10,
   nodes near coords,
   point meta=explicit symbolic]
   \addplot+[only marks] coordinates{(3,3)[$c_1$] (6,6)[$c_2$]};
   \draw[<->, thick](axis cs:1,3)--(axis cs:1,6) node[midway, right] {$\Delta y$};
   \draw[<->, thick](axis cs:3,7)--(axis cs:6,7) node[midway, above] {$\Delta x$};
    \addplot+[only marks, red] coordinates{(3,6)[$M_1$] (6,3)[$M_2$]};
    \draw[-, blue] (axis cs:3,3) rectangle (axis cs:6,6);
    \addplot+[mark = none, red, thick] coordinates{(3,6) (6,3)};
    \filldraw[draw=red,fill=red!30] (axis cs:0,6) rectangle (axis cs:3,10);
    \filldraw[draw=red,fill=red!30] (axis cs:6,0) rectangle (axis cs:10,3);
    \end{axis}
    \end{tikzpicture}
    \caption{The \hyp separating $c_1$ and $c_2$ if $\Delta x = \Delta y>0$.}
    \label{fig:hyperplan_l1_carre}
\end{figure}
\noindent
In this special case, the rectangle is a square where $c_1$ and $c_2$ are opposite vertices, and $M_1$ and $M_2$ are the two other opposite vertices. The \hyp $H(c_1,c_2)$ is then composed of the three following parts:
\begin{enumerate}
    \item the quadrant $\{ (x,y) \in \mathbb{R}^2 \vert x \leq x_{M_1}, y \geq y_{M_1} \}$,
    \item the segment $[M_1, M_2]$,
    \item the quadrant $\{ (x,y) \in \mathbb{R}^2 \vert x \geq x_{M_1}, y \leq y_{M_1} \}$.
\end{enumerate}
\item Consider now the case $\Delta x\!=\!0$ or $\Delta y\!=\!0$. Clearly, the \hyp is then the same as for the $\ell_2$ metrics (i.e., a straight line at equal $\ell_2$ distance from $c_1$ and $c_2$). 
\end{enumerate}
\noindent

The following result states that, to recognise an \lunprofile profile, we can assume without loss of generality that all \hyps are of the first type described above:

\begin{lem}\label{lemma:type1}
Let $\mathcal{P}$ be an \lunprofile profile. There exists a representation of $\mathcal{P}$ in which all \hyps are of type 1, i.e., $\Delta x\neq \Delta y$ and $\Delta x, \Delta y >0$.
\end{lem}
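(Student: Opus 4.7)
The plan is to start from any $\ell_1$-Euclidean representation $f$ of $\mathcal{P}$ and show that an arbitrarily small perturbation of the candidate positions, with voter positions held fixed, produces a representation of the same profile in which every pair of candidates satisfies the type-1 conditions $\Delta x > 0$, $\Delta y > 0$, and $\Delta x \neq \Delta y$.

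The first step is a continuity observation. Since $f$ is an $\ell_1$-Euclidean representation, it satisfies the finite collection of strict inequalities $\|f(v)-f(c_i)\|_{\ell_1} < \|f(v)-f(c_j)\|_{\ell_1}$ for every voter $v$ and every pair with $c_i >_v c_j$. Each of these inequalities depends continuously on the coordinates of $c_i$ and $c_j$ (with $f(v)$ fixed), so the set $\mathcal{U}\subseteq (\mathbb{R}^2)^m$ of candidate placements for which all these strict inequalities remain valid is open and contains the original placement $(f(c_1),\ldots,f(c_m))$.

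The second step is a genericity argument. For each pair $\{c_i,c_j\}$, the forbidden configurations are $x_i=x_j$, $y_i=y_j$, $x_i-x_j=y_i-y_j$, and $x_i-x_j=-(y_i-y_j)$. Each of these defines an affine hyperplane in $(\mathbb{R}^2)^m$, and the union $\mathcal{B}$ over all $\binom{m}{2}$ pairs is a finite union of affine hyperplanes, hence of Lebesgue measure zero. Therefore $\mathcal{B}$ is nowhere dense, so the open set $\mathcal{U}$ cannot be contained in $\mathcal{B}$, and any point of $\mathcal{U}\setminus\mathcal{B}$ provides a representation of $\mathcal{P}$ in which every boundary hypersurface is of type~1.

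The only subtlety is to ensure that moving the candidates does not push some voter out of its preference area, but this is precisely what the openness of $\mathcal{U}$ guarantees: a sufficiently small perturbation preserves every strict inequality defining the preferences, and avoiding the finitely many measure-zero hyperplanes of $\mathcal{B}$ can be done within any neighborhood of the original placement.
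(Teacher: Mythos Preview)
Your proof is correct, but it follows a genuinely different route from the paper's. The paper argues constructively: it introduces explicit tolerances $\varepsilon_d$ (the minimum gap between distances over all voters and candidate pairs) and $\varepsilon_x,\varepsilon_y,\varepsilon_{xy}$ (the minimum nonzero values of $|x_i-x_j|$, $|y_i-y_j|$, and $\big||x_i-x_j|-|y_i-y_j|\big|$), and then, whenever a pair $\{c_i,c_j\}$ is degenerate, shifts one candidate by $\varepsilon=\tfrac12\min\{\varepsilon_d,\varepsilon_x,\varepsilon_y,\varepsilon_{xy}\}$ along the appropriate axis. This removes the degeneracy without destroying any existing non-degeneracy or any preference inequality, and the process is iterated until all pairs are of type~1.

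Your argument replaces this iterative bookkeeping by a single genericity step: the set $\mathcal{U}$ of admissible candidate placements is open by continuity of the $\ell_1$ norm, while the degenerate placements form a finite union $\mathcal{B}$ of affine hyperplanes in $(\mathbb{R}^2)^m$, hence a closed set with empty interior; therefore $\mathcal{U}\setminus\mathcal{B}\neq\emptyset$. This is cleaner and handles all degeneracies simultaneously, avoiding the delicate point (which the paper has to track) that fixing one degeneracy must not create another. On the other hand, the paper's explicit $\varepsilon$-quantities are not wasted effort: they are reused verbatim in the proofs of later results (e.g., Lemma~\ref{lemma:noinfiniteinter} and the argument excluding $4$-intersections in Lemma~\ref{lemma:atmost19}), so the constructive approach pays dividends downstream that your more abstract argument would have to reproduce.
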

\begin{proof}
In an $\ell_1$-Euclidean representation of a preference profile, as we consider only strict preferences, we have for any candidates $c_i,c_j$ and voter $v$:
\begin{equation}\label{eq:epsilon}
\left|  \ \| f(v) - f(c_i) \|_{\ell_1} - \| f(v) - f(c_j) \|_{\ell_1} \  \right| >0 
\end{equation}
Then, let us denote by \textcolor{black}{$\varepsilon_d$} the minimum difference in absolute value of distances as in~(\ref{eq:epsilon}), over all pairs $\{c_i,c_j\}$ of candidates and voters $v$. \textcolor{black}{Moreover, let $(x_i, y_i)$ be the position of candidate $c_i$ in the representation, and $S_x$ (resp. $S_y$, $S_{xy}$) the set of pairs of candidates $\{c_i, c_j\}$ with $|x_i-x_j|>0$ 
(resp. $|y_i-y_j|>0$, $\Big| |x_i - x_j| - |y_i - y_j|\Big|>0$). We define also: 
\os{
\begin{align*} 
\varepsilon_x & = \min_{\{c_i, c_j\} \in S_x} | x_i - x_j|,\\
\varepsilon_y & = \min_{\{c_i, c_j\} \in S_y } | y_i - y_j|,\\
\mbox{and } \varepsilon_{xy} & =  \min_{ \{c_i, c_j\} \in S_{xy} } \Big| |x_i - x_j| - |y_i - y_j|\Big|.
\end{align*}
}
If there is a pair $\{ c_i, c_j\}$ such that $x_i\!=\!x_j$ (in other words, $\{c_i,c_j\}\!\notin\! S_x$), we can move one of these candidates, say $c_i$, 
by adding $\varepsilon$ to $x_i$ with $\varepsilon\!=\!\frac{1}{2} \min \{ \varepsilon_d, \varepsilon_x, \varepsilon_y, \varepsilon_{xy}\}$. We then get $|x_i - x_j|\!>\!0$. We note that after this operation, we have $S_x \leftarrow S_x \cup \{\{ c_i, c_j\}\}$ and $S_y$ and $S_{xy}$ are not modified. An analogous reasoning can be done for every pair $\{c_i, c_j\}$ of candidates such that $\{ c_i, c_j\} \notin S_y$ (by moving one candidate on the $y$-axis), resp. $\{ c_i, c_j\} \notin S_{xy}$ (by moving one candidate on one axis). This way, by iterating these modifications, we finally get a representation without the degenerated cases $\Delta x=\Delta y$, $\Delta x = 0$, or $\Delta y=0$.} 
\end{proof}

Hence, without loss of generality, we assume that all $\ell_1$-\hyps are of type 1 in the following. 

We can go further into the classification of the different \hyps of type 1. First, notice that
if $\Delta x\!<\!\Delta y$,  both half-line parts of the \hyp are horizontal. In the opposite case, when $\Delta x > \Delta y$, these half-lines are vertical. 

Now, let us look at the segment $[M_1, M_2]$ of the \hyp\hspace{-0.125cm}. In the following, the numbering of the quadrants of the Cartesian coordinate system goes counter-clockwise starting from the upper right quadrant. Without loss of generality, assume that $x_1 < x_2$, where $c_1 = (x_1, y_1)$ and $c_2 = (x_2, y_2)$. If $y_1 < y_2$, the segment $[M_1, M_2]$ is parallel to the II-IV quadrant diagonal, also called the ``minus diagonal'' (see the upper part of Figure~\ref{fig:hyps_overview}). If $y_1 > y_2$, the segment $[M_1, M_2]$ is parallel to the I-III quadrant diagonal, also called the ``plus diagonal'' (see the lower part of Figure~\ref{fig:hyps_overview}).

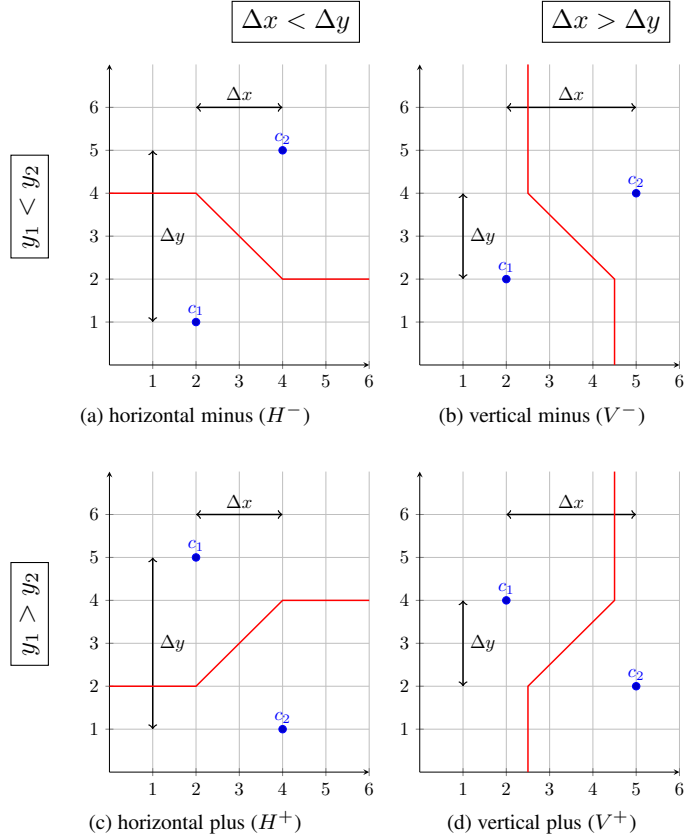
\begin{figure}[tb]
    \centering
    \subfloat[\centering horizontal minus ($H^-$)]{
    \centering
    \begin{tikzpicture}[scale=0.7]
   \node [draw, rotate=90] at (-1.5,3) {$y_1 < y_2$};
   \node [draw] at (3.5,6.5) {$\Delta x < \Delta y$};
   \begin{axis}
   [axis x line=bottom,axis y line = left, 
   grid = major,
   axis equal image,
   ytick = {1,2,3,4,5,6},
   xtick = {1,2,3,4,5,6},
   xmin=0,
   xmax=6,
   ymin=0,
   ymax=7,
   nodes near coords,
   point meta=explicit symbolic]
   \addplot+[only marks] coordinates{(2,1)[$c_1$] (4,5)[$c_2$]};
   \draw[<->, thick](axis cs:1,1)--(axis cs:1,5) node[midway, right] {$\Delta y$};
   \draw[<->, thick](axis cs:2,6)--(axis cs:4,6) node[midway, above] {$\Delta x$};
    \addplot+[mark = none, red, thick] coordinates{(0,4) (2,4) (4,2) (6,2)};
    \end{axis}
    \end{tikzpicture}
    }
    \subfloat[\centering vertical minus ($V^-$)]{
    \begin{tikzpicture}[scale = 0.7]
    \node [draw] at (3.5,6.5) {$\Delta x > \Delta y$};
   \begin{axis}
   [axis x line=bottom,axis y line = left, 
   grid = major,
   axis equal image,
   ytick = {1,2,3,4,5,6},
   xtick = {1,2,3,4,5,6},
   xmin=0,
   xmax=6,
   ymin=0,
   ymax=7,
   nodes near coords,
   point meta=explicit symbolic]
   \addplot+[only marks] coordinates{(2,2)[$c_1$] (5,4)[$c_2$]};
   \draw[<->, thick](axis cs:1,2)--(axis cs:1,4) node[midway, right] {$\Delta y$};
   \draw[<->, thick](axis cs:2,6)--(axis cs:5,6) node[midway, above] {$\Delta x$};
    \addplot+[mark = none, red, thick] coordinates{(4.5,0) (4.5,2) (2.5,4) (2.5,7)};
    \end{axis}
    \end{tikzpicture}
    } \\
    \subfloat[\centering horizontal plus ($H^+$)]{
    \begin{tikzpicture}[scale=0.7]
    \node [draw, rotate=90] at (-1.5,3) {$y_1 > y_2$};
   \begin{axis}
   [axis x line=bottom,axis y line = left, 
   grid = major,
   axis equal image,
   ytick = {1,2,3,4,5,6},
   xtick = {1,2,3,4,5,6},
   xmin=0,
   xmax=6,
   ymin=0,
   ymax=7,
   nodes near coords,
   point meta=explicit symbolic]
   \addplot+[only marks] coordinates{(2,5)[$c_1$] (4,1)[$c_2$]};
   \draw[<->, thick](axis cs:1,1)--(axis cs:1,5) node[midway, right] {$\Delta y$};
   \draw[<->, thick](axis cs:2,6)--(axis cs:4,6) node[midway, above] {$\Delta x$};
    \addplot+[mark = none, red, thick] coordinates{(0,2) (2,2) (4,4) (6,4)};
    \end{axis}
    \end{tikzpicture}
    }
    \subfloat[\centering vertical plus ($V^+$)]{
    \begin{tikzpicture}[scale = 0.7]
   \begin{axis}
   [axis x line=bottom,axis y line = left, 
   grid = major,
   axis equal image,
   ytick = {1,2,3,4,5,6},
   xtick = {1,2,3,4,5,6},
   xmin=0,
   xmax=6,
   ymin=0,
   ymax=7,
   nodes near coords,
   point meta=explicit symbolic]
   \addplot+[only marks] coordinates{(2,4)[$c_1$] (5,2)[$c_2$]};
   \draw[<->, thick](axis cs:1,2)--(axis cs:1,4) node[midway, right] {$\Delta y$};
   \draw[<->, thick](axis cs:2,6)--(axis cs:5,6) node[midway, above] {$\Delta x$};
    \addplot+[mark = none, red, thick] coordinates{(4.5,7) (4.5,4) (2.5,2) (2.5,0)};
    \end{axis}
    \end{tikzpicture}
    }
    \caption{\label{fig:hyps_overview}The different $\ell_1$-\hyps for $x_1 < x_2$.}
\end{figure}

Now that we have seen the shape of \hyps for $\ell_1$, we illustrate this by giving a representation of the profile on 3 candidates that includes all $3!\!=\!6$ possible strict preferences over the 3 candidates (\emph{complete} profile).

\begin{example}[Example~\ref{ex:n=3ell2} continued]
Figure~\ref{fig:l1_c3} shows that the complete profile on 3 candidates (with 6 preferences) is $\ell_1$-Euclidean, by providing a $\ell_1$-Euclidean representation of the profile. 
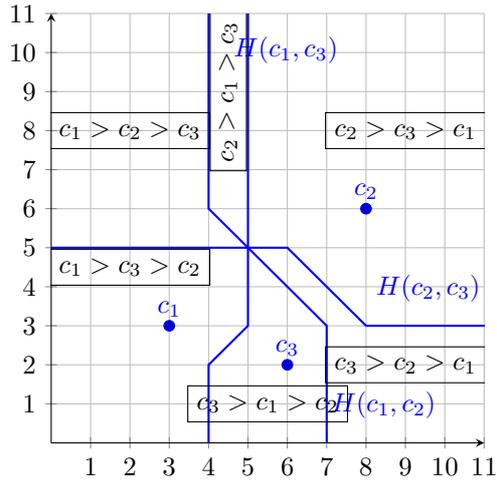
\begin{figure}[tb]
    \centering
    \begin{tikzpicture}
   \begin{axis}
   [axis x line=bottom,axis y line = left, 
   grid = major,
   axis equal image,
   ytick = {1,2,3,4,5,6,7,8,9,10,11},
   xtick = {1,2,3,4,5,6,7,8,9,10,11},
   xmin=0,
   xmax=11,
   ymin=0,
   ymax=11,
   nodes near coords,
   point meta=explicit symbolic]
   \addplot+[only marks] coordinates{(3,3)[$c_1$] (8,6)[$c_2$] (6,2)[$c_3$]};
    \addplot+[mark = none, blue, thick] coordinates{(4,12) (4,6) (7,3) (7,0)} node[xshift = 0.75cm, yshift = 0.5cm] {$H(c_1, c_2)$};
    
     \addplot+[mark = none, blue, thick] coordinates{(4,0) (4,2) (5,3) (5,12)} node[xshift = 0.5cm, yshift = -1cm] {$H(c_1, c_3)$};
     
     \addplot+[mark = none, blue, thick] coordinates{(0,5) (6,5) (8,3) (12,3)} node[xshift = -1.25cm, yshift = 0.5cm] {$H(c_2, c_3)$};
     
     \node[draw] at (axis cs:2,4.5) {$c_1 > c_3 > c_2 $};
     \node[draw] at (axis cs:2,8) {$c_1 > c_2 > c_3 $};
     \node[draw,rotate = 90] at (axis cs:4.5,9) {$c_2 > c_1 > c_3 $};
     \node[draw] at (axis cs:9,8) {$c_2 > c_3 > c_1 $};
     \node[draw] at (axis cs:9,2) {$c_3 > c_2 > c_1 $};
      \node[draw] at (axis cs:5.5,1) {$c_3 > c_1 > c_2 $};
    
    \end{axis}
    \end{tikzpicture}
    \caption{A $\ell_1$-Euclidean representation of the complete profile on 3 candidates.}
    \label{fig:l1_c3}
\end{figure}
\end{example}

More interestingly, we show that the profile on 4 candidates given in Example~\ref{ex:n=4ell2} is $\ell_1$-Euclidean (while it is not $\ell_2$-Euclidean).

\begin{example}[Example~\ref{ex:n=4ell2} continued]
Figure~\ref{fig:ex_intro_l1} shows that profile $\mathcal{P}$ of Example~\ref{ex:n=4ell2} is \lunprofile in $\mathbb{R}^2$, by providing a $\ell_1$-Euclidean representation of the profile.
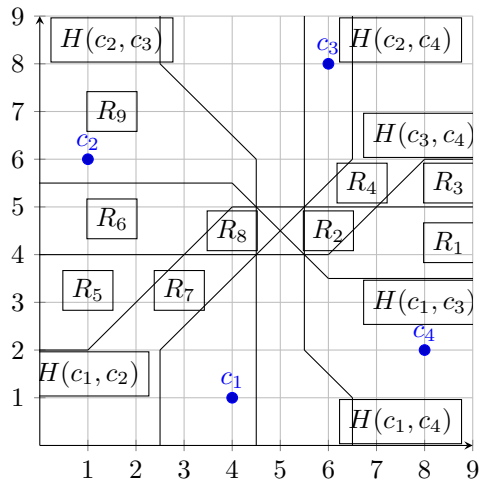
\begin{figure}[tb]
    \centering
    \begin{tikzpicture}[scale = 1]
   \begin{axis}
   [axis x line=bottom,axis y line = left, 
   grid = major,
   axis equal image,
   ytick = {1,2,3,4,5,6,7,8,9,10,11},
   xtick = {1,2,3,4,5,6,7,8,9,10,11},
   xmin=0,
   xmax=9,
   ymin=0,
   ymax=9,
   nodes near coords,
   point meta=explicit symbolic]
   \addplot+[only marks] coordinates{(4,1)[$c_1$] (1,6)[$c_2$] (6,8)[$c_3$] (8,2)[$c_4$]};
   \addplot+[mark = none, color = black] coordinates{(4,5.5) (6,3.5)};
   \addplot+[mark = none, color = black] coordinates{(-2,5.5) (4,5.5)};
   \addplot+[mark = none, color = black] coordinates{(6,3.5) (12,3.5)};
   \node[draw] at (axis cs:8,3) {$H(c_1, c_3)$};
    
   \draw[color = black] (axis cs:4,5) -- (axis cs:1,2);
   \draw[color = black] (axis cs:-2,2) -- (axis cs:1,2);
   \draw[color = black] (axis cs:4,5)--(axis cs:12,5);
   \node[draw] at (axis cs:1,1.5) {$H(c_1, c_2)$};
    
    \draw[color = black] (axis cs:6.5,1) -- (axis cs:5.5,2);
    \draw[color = black] (axis cs:6.5,-2) -- (axis cs:6.5,1);
    \draw[color = black] (axis cs:5.5,2)--(axis cs:5.5,12);
   
    \node[draw] at (axis cs:7.5,0.5) {$H(c_1, c_4)$};
    
    \draw[color = black] (axis cs:4.5,6) -- (axis cs:2.5,8);
    \draw[color = black] (axis cs:4.5,-2) -- (axis cs:4.5,6);
    \draw[color = black] (axis cs:2.5,8)--(axis cs:2.5,12);
    \node[draw] at (axis cs:1.5,8.5) {$H(c_2, c_3)$};
    
    \draw[color = black] (axis cs:6.5,6) -- (axis cs:2.5,2);
    \draw[color = black] (axis cs:2.5,-2) -- (axis cs:2.5,2);
    \draw[color = black] (axis cs:6.5,6)--(axis cs:6.5,12);
    \node[draw] at (axis cs:7.5,8.5) {$H(c_2, c_4)$};
    
    \draw[color = black] (axis cs:8,6) -- (axis cs:6,4);
    \draw[color = black] (axis cs:-2,4) -- (axis cs:6,4);
    \draw[color = black] (axis cs:8,6)--(axis cs:12,6);
    \node[draw] at (axis cs:8,6.5) {$H(c_3, c_4)$};
   
   \node[draw] at (axis cs:8.5,4.25) {$R_1$};
   \node[draw] at (axis cs:6,4.5) {$R_2$};
   \node[draw] at (axis cs:8.5,5.5) {$R_3$};
   \node[draw] at (axis cs:6.7,5.5) {$R_4$};
   \node[draw] at (axis cs:1,3.25) {$R_5$};
   \node[draw] at (axis cs:1.5,4.75) {$R_6$};
   \node[draw] at (axis cs:2.9,3.25) {$R_7$};
   \node[draw] at (axis cs:4,4.5) {$R_8$};
   
   \node[draw] at (axis cs:1.5,7) {$R_9$};
    \end{axis}
    \end{tikzpicture}
    \caption{A $\ell_1$-Euclidean representation of the profile $\mathcal{P}$ of Example~\ref{ex:n=4ell2}.}
    \label{fig:ex_intro_l1}
\end{figure}
\end{example}

\subsection{Intersection of boundary \hyps}\label{subsec:interhyper}

It will come as no surprise that many geometrical properties holding for $\ell_2$ do not hold for $\ell_1$-\hyps\hspace{-0.11cm}. Let us mention some of them that are useful for the rest of this paper. \\ \\  
\noindent
It is well-known that given two distinct lines (i.e., $\ell_2$-\hyps\hspace{-0.11cm}), the intersections of these lines is either empty (if the lines are parallel) or contains a unique point. In the case of $\ell_1$-\hyps\hspace{-0.11cm}, more situations may arise, as stated in the following proposition (\textcolor{black}{several examples of possible intersections are given in Figure~\ref{fig:intersections_exemple_main} for illustration. The proof, as well as complete figures illustrating the different situations, are given in Appendix~\ref{app:prop3}}). 
\begin{figure}
    \centering
    \subfloat[Example of empty intersection]{\begin{tikzpicture}[scale=0.5]
   \begin{axis}
   [axis x line=bottom,axis y line = left, 
   grid = major,
   axis equal image,
   ytick = {1,2,3,4,5,6},
   xtick = {1,2,3,4,5,6},
   xmin=0,
   xmax=6,
   ymin=0,
   ymax=7,
   nodes near coords,
   point meta=explicit symbolic]
   \addplot+[only marks] coordinates{(0,1)[\Large$c_1$] (2,2)[\Large$c_2$]};
   \addplot+[only marks] coordinates{(2.5,6)[\Large$c_3$] (6,4)[\Large$c_4$]};
  
    \addplot+[mark = none, blue, thick] coordinates{(1.5,0) (1.5,1) (0.5,2) (0.5,7)};
    \addplot+[mark = none, red, thick] coordinates{(3,0) (3,4) (5,6) (5,7)};
    \end{axis}
    \end{tikzpicture}}
    \hspace{0.07cm}
    \subfloat[Example of intersection containing one point]{\begin{tikzpicture}[scale=0.5]
   \begin{axis}
   [axis x line=bottom,axis y line = left, 
   grid = major,
   axis equal image,
   ytick = {1,2,3,4,5,6},
   xtick = {1,2,3,4,5,6},
   xmin=0,
   xmax=6,
   ymin=0,
   ymax=7,
   nodes near coords,
   point meta=explicit symbolic]
   \addplot+[only marks] coordinates{(0,1)[\Large$c_1$] (6,3)[\Large$c_2$]};
   \addplot+[only marks] coordinates{(1,6)[\Large$c_3$] (2,2)[\Large$c_4$]};
  
    \addplot+[mark = none, blue, thick] coordinates{(4,0) (4,1) (2,3) (2,7)};
    \addplot+[mark = none, red, thick] coordinates{(0,3.5) (1,3.5) (2,4.5) (6,4.5)};
    \end{axis}
    \end{tikzpicture}}
     \hspace{0.07cm}
    \subfloat[Example of intersection containing two points]{ \begin{tikzpicture}[scale=0.5]
   \begin{axis}
   [axis x line=bottom,axis y line = left, 
   grid = major,
   axis equal image,
   ytick = {1,2,3,4,5,6},
   xtick = {1,2,3,4,5,6},
   xmin=0,
   xmax=6,
   ymin=0,
   ymax=7,
   nodes near coords,
   point meta=explicit symbolic]
   \addplot+[only marks] coordinates{(0,1)[\Large$c_1$] (6,3)[\Large$c_2$]};
   \addplot+[only marks] coordinates{(0.5,3)[\Large$c_3$] (4,4)[\Large$c_4$]};
  
    \addplot+[mark = none, blue, thick] coordinates{(4,0) (4,1) (2,3) (2,7)};
    \addplot+[mark = none, red, thick] coordinates{(2.75,0) (2.75,3) (1.75,4) (1.75,7)};
    \end{axis}
    \end{tikzpicture}}
     \hspace{0.07cm}
    \subfloat[Example of intersection containing an infinite number of points]{ \begin{tikzpicture}[scale=0.5]
   \begin{axis}
   [axis x line=bottom,axis y line = left, 
   grid = major,
   axis equal image,
   ytick = {1,2,3,4,5,6},
   xtick = {1,2,3,4,5,6},
   xmin=0,
   xmax=6,
   ymin=0,
   ymax=7,
   nodes near coords,
   point meta=explicit symbolic]
   \addplot+[only marks] coordinates{(0,1)[\Large$c_1$] (6,3)[\Large$c_2$]};
   \addplot+[only marks] coordinates{(1,0.5)[\Large$c_3$] (3,5.5)[\Large$c_4$]};
  
    \addplot+[mark = none, blue, thick] coordinates{(4,0) (4,1) (2,3) (2,7)};
    \addplot+[mark = none, red, thick] coordinates{(0,4) (1,4) (3,2) (6,2)};
    \end{axis}
    \end{tikzpicture}}
    
    \caption{\textcolor{black}{The intersection of two distinct $\ell_1$-\hyps: several examples}}
    \label{fig:intersections_exemple_main}
\end{figure}
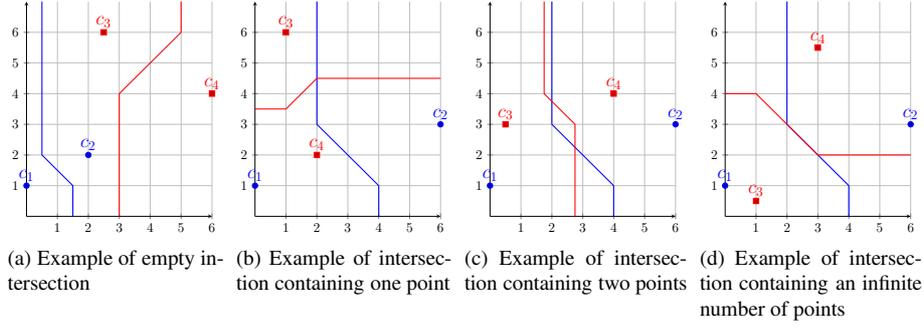
\begin{prop}\label{prop:nb_intersections}
The intersection of two distinct $\ell_1$-\hyps  is either empty or contains  a unique point, or two distinct points, or an infinite number of points. 
\end{prop}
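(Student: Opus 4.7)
My plan is to exploit the reduction provided by Lemma~\ref{lemma:type1} so that both hypersurfaces may be assumed to be of type 1. Each such hypersurface is then the graph of a monotone piecewise-linear function in one of the two coordinates, with slopes in $\{0,\pm 1\}$: a type $H^\pm$ hypersurface is a graph $y = f(x)$ with two flat rays and a middle diagonal piece of slope $\pm 1$, while a type $V^\pm$ hypersurface is analogously a graph $x = h(y)$. It then suffices to show that, whenever the intersection is finite, it contains at most two points, and I would distinguish three cases according to the axes of the rays.

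The first case is when both hypersurfaces are of $H$-type (the $V$/$V$ case is fully symmetric). Writing $H_i\!:\,y\!=\!f_i(x)$, the intersection points correspond to zeros of $g(x) = f_1(x) - f_2(x)$, a piecewise-linear function with at most four breakpoints. If the two hypersurfaces share the orientation of their diagonal (both $H^-$ or both $H^+$), the slopes of $g$ lie in $\{-1,0,1\}$, and a short subdivision on the three essentially different orderings of the diagonals' endpoints (overlapping, nested, disjoint) shows that $g$ has either a "trough" shape (flat--decreasing--possibly flat--increasing--flat), contributing at most two zeros on the two strictly monotone parts, or is non-increasing (in the nested sub-case), contributing at most one. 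In both shapes, an infinite intersection can occur exactly when a flat piece of $g$ sits at zero, which geometrically corresponds to two pieces of $H_1, H_2$ being collinear. If the two orientations differ ($H^-$ against $H^+$), the slopes of $g$ instead lie in $\{-2,-1,0\}$, so $g$ is non-increasing and has at most one zero in the finite case.

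The second case is mixed: $H_1$ is of $H$-type and $H_2$ is of $V$-type (or vice-versa). Writing $H_1\!:\,y = f_1(x)$ and $H_2\!:\,x = h_2(y)$, the intersection points are in bijection with the fixed points of $h_2 \circ f_1$, i.e.\ the zeros of $G(x) = h_2(f_1(x)) - x$. By the chain rule applied piecewise, $(h_2 \circ f_1)'(x) \in \{-1,0,1\}$; a quick check of the four sub-cases according to the $\pm$ signs of $H_1$ and $H_2$ shows that $G$ is always monotone — either strictly decreasing (slopes in $\{-2,-1\}$) when the signs are "opposite", or non-increasing (slopes in $\{-1,0\}$) when the signs "match" — so $G$ has at most one zero in the finite case, and again an infinite intersection requires the flat piece of $G$ to sit at zero, which corresponds to collinear pieces.

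Combining the three cases, a finite intersection contains at most two points, proving the proposition. The main obstacle I anticipate is the bookkeeping of the $H^-/H^-$ sub-case: one must carefully enumerate the three orderings of the four breakpoints of $f_1$ and $f_2$ to pin down the precise piecewise structure of $g$, and relate the "flat middle of the trough at zero" situation to the collinearity of the two diagonals so that infinite intersections are accounted for exactly in the degenerate geometric configurations.
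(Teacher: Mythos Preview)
Your argument is correct. Both you and the paper organise the proof as a case analysis on the four hypersurface types $H^\pm,V^\pm$ (after the reduction to type~1), but the \emph{method} inside each case differs. The paper fixes one hypersurface to be $V^-$ and, for each of the four types of the second hypersurface, argues geometrically from the pictures which pieces (rays, middle segment) can meet which, reading off whether $0$, $1$, $2$, or infinitely many intersections are possible. Your approach is instead analytic: you encode each hypersurface as the graph of a monotone piecewise-linear function with slopes in $\{-1,0,1\}$, and reduce the intersection count to the zero set of either the difference $g=f_1-f_2$ (same axis) or $G(x)=h_2(f_1(x))-x$ (mixed axes). The slope bookkeeping you describe then pins down the monotonicity class of $g$ or $G$ and hence the maximal finite number of zeros.

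What this buys you is a cleaner, figure-free argument: the ``trough'' and ``non-increasing'' shapes in the $H^-/H^-$ sub-cases (overlapping/disjoint versus nested), and the chain-rule computation $(h_2\circ f_1)'\in\{-1,0,1\}$ in the mixed case, make the bound of two finite intersections fully explicit, and the degenerate infinite-intersection case is identified precisely with a flat piece of $g$ or $G$ sitting at zero. The paper's approach, by contrast, trades rigour for geometric intuition and relies on the figures to convince the reader that, e.g., two $V^-$ hypersurfaces can cross twice only when a middle segment of each meets a ray of the other. Your anticipated obstacle (the breakpoint orderings in the $H^-/H^-$ case) is indeed the only place requiring care, and your sketch of the three sub-cases is accurate.
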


The following result states that, to recognise a $\ell_1$-Euclidean profile, we can assume without loss of generality that the last case of Proposition~\ref{prop:nb_intersections} (corresponding to a degenerate case) never occurs. \textcolor{black}{Thus, in the remainder of the article, we assume w.l.o.g. that \hyps intersect in at most 2 points.}

\begin{lem}\label{lemma:noinfiniteinter}
Let $\mathcal{P}$ be an \lunprofile profile. There exists a representation of $\mathcal{P}$ in which any pair of \hyps intersect in at most 2 points.
\end{lem}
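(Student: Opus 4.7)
The plan is to start from an arbitrary $\ell_1$-Euclidean representation of $\mathcal{P}$ satisfying Lemma~\ref{lemma:type1} (so that every hypersurface is of type 1) and then show that a sufficiently small generic perturbation of the candidate positions eliminates every infinite intersection while preserving all voter preferences. By Proposition~\ref{prop:nb_intersections}, infinite intersection is the only case that needs to be ruled out.

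First I would characterize when two hypersurfaces $H(c_i,c_j)$ and $H(c_k,c_l)$ share infinitely many points. Since every type-1 hypersurface is a concatenation of two horizontal or vertical half-lines with a diagonal segment of slope $\pm 1$, an infinite intersection can only arise if two half-lines (respectively, two diagonal segments) lie on a common supporting line and extend in a common direction. Each such coincidence translates into a single linear equation in the coordinates of $c_i,c_j,c_k,c_l$; for instance, $y_i+y_j \pm |x_i-x_j| = y_k+y_l \pm |x_k-x_l|$ for two coinciding horizontal half-lines, or an analogous equation of the form $x_i+y_j = x_k+y_l$ (up to signs) for two diagonal segments lying on the same minus-diagonal. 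Over the finitely many pairs of hypersurfaces, this yields a finite list of linear equalities whose zero-locus in the space $\mathbb{R}^{2m}$ of candidate positions has measure zero.

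Next, exactly as in the proof of Lemma~\ref{lemma:type1}, I would set $\varepsilon_d = \min_{v,\{c_i,c_j\}} \bigl|\,\|f(v)-f(c_i)\|_{\ell_1}-\|f(v)-f(c_j)\|_{\ell_1}\,\bigr| > 0$ and observe that shifting each candidate position by less than $\varepsilon_d/4$ in $\ell_1$-norm preserves every voter's preference ranking, since each candidate-to-voter distance changes by at most $\varepsilon_d/4$. I would then iterate over the finite list of bad equalities (together with the type-1 non-degeneracy conditions from Lemma~\ref{lemma:type1}) and, at each step, slightly move one coordinate of one involved candidate so as to break the current equality in a direction transverse to all previously broken equalities. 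This adjustment is taken small enough to preserve all previously established non-degeneracies and all voter preferences, so the process terminates after finitely many steps with the desired representation.

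The main obstacle will be the case analysis required to correctly enumerate all coincidence equations producing an infinite intersection, in particular paying attention to the orientation of the half-lines (two horizontal half-lines at the same height coincide on a set of infinite extent only if they extend in the same direction, not in opposite ones) and to the interactions between half-lines and diagonal segments across the four types $H^-, V^-, H^+, V^+$ introduced in Figure~\ref{fig:hyps_overview}. Once this enumeration is carried out, the perturbation argument is routine and mirrors the iterative scheme already used in the proof of Lemma~\ref{lemma:type1}.
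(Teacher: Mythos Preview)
Your proposal is correct and follows essentially the same perturbation strategy as the paper: start from a type-1 representation, define the slack $\varepsilon_d$ guaranteeing that small moves preserve all preferences, and then nudge one candidate coordinate to break each infinite coincidence, iterating until none remain. The paper's argument is more direct than yours: rather than cataloguing all linear coincidence equations and invoking a measure-zero genericity principle, it simply observes that if $H(c_i,c_j)$ is (say) vertical and shares an infinite piece with $H(c_k,c_l)$, then adding a small $\varepsilon$ (bounded by both $\varepsilon_d$ and the type-preserving quantity $\varepsilon_t=\min_{c_k\neq c_i}\bigl||x_i-x_k|-|y_i-y_k|\bigr|$) to $x_i$ shifts the whole hypersurface $H(c_i,c_j)$ to the right, which immediately destroys the overlap without changing any hypersurface type or any vote. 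Your enumeration of coincidence equations is not wrong, but it is unnecessary extra work; the paper bypasses it entirely by arguing at the level of the geometric shift of the hypersurface.
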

\begin{proof}
\textcolor{black}{The proof is similar to that of Lemma~\ref{lemma:type1}. Let us suppose that for a given $\ell_1$-Euclidean representation $f$, there are two \hyps $H(c_i, c_j)$ and $H(c_k, c_l)$ that intersect in infinitely many points. We will show that it is always possible to slightly change the position of one  of the points $f(c_i), f(c_j), f(c_k)$ and $f(c_l)$ so that the \hyps intersect in at most two points, without modifying the types of \hyps and the set of representation areas. \\ \\
Suppose that one of the hypersurfaces, say $H(c_i,c_j)$, is vertical (the other case being symmetrical). We move the point $f(c_i)$. To do so, we denote 
$$ \varepsilon_t = \min_{c_k \in C \setminus \{ c_i\}}\Big||x_i - x_k | - |y_i - y_k | \Big| $$
and, as in the Lemma~\ref{lemma:type1},
$$ \varepsilon_d = \min_{v \in V} \min_{c_i, c_j \in C}\left|  \ \| f(v) - f(c_i) \|_{\ell_1} - \| f(v) - f(c_j) \|_{\ell_1} \  \right|. $$
As we consider only strict preferences, $\varepsilon_d > 0$. Also, thanks to Lemma~\ref{lemma:type1} that excluded a degeneration $\Delta x = \Delta y$, we have $\varepsilon_t > 0$. Let $\varepsilon = \frac{1}{2}\min\{\varepsilon_d, \varepsilon_t\}$.\footnote{For completeness, we should also choose $\varepsilon$  smaller than $\varepsilon_x, \varepsilon_y$ and $\varepsilon_{xy}$ introduced in Lemma~\ref{lemma:type1}, to ensure that we do not create any degeneration excluded by this Lemma while moving the point $f(c_i)$.} We can now move the point $f(c_i)$ by adding $\varepsilon$ to $x_i$. 
As $\varepsilon < \varepsilon_d$, we do not change the set of preferences corresponding to representation areas. As $\varepsilon < \varepsilon_t$, we do not change the type of any \hyp involving $c_i$. Finally,  
as $x_i$ increased by $\varepsilon>0$, the value of $| x_i - x_j | + | y_i - y_j |$ changes, and the \hyp (both the vertical extremities and the middle segment) slightly moves to the right on the $x-$axis.
Therefore, $H(c_i, c_j)$ and $H(c_k, c_l)$ no more intersect in an infinity of points.  
} 
\end{proof}

In Euclidean geometry \textcolor{black}{under norm $\ell_2$}, the bisectors of the three sides of a \textcolor{black}{non-degenerate} triangle intersect in a unique point\textcolor{black}{, and do not intersect otherwise}. In terms of \hyps\hspace{-0.11cm}, given three points $c_1$, $c_2$ and $c_3$, the \hyps $H(c_1, c_2)$, $H(c_1, c_3)$ and $H(c_2, c_3)$ intersect in \textcolor{black}{at most one} point under $\ell_2$. \textcolor{black}{We have the following analogous result in case of} $\ell_1$-\hyps\hspace{-0.11cm}: 
\begin{prop}\label{prop:triangle_intersection}
Given three points $c_1$, $c_2$ and $c_3$:
\begin{itemize}
    \item If $H(c_1,c_2)$, $H(c_1, c_3)$ and $H(c_2, c_3)$ are all vertical (or all horizontal), then the intersection of each pair of \hyps is empty. In particular, the intersection of the 3 \hyps is empty.
    \item If two of them are vertical and one is horizontal (or vice-versa), then the intersection of the 3 \hyps is a unique point.
\end{itemize}  
\end{prop}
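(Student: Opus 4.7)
I would handle the two cases separately, leveraging the explicit descriptions of $\ell_1$-hypersurfaces from Section~\ref{subsec:typehyper}.

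For the first case, suppose all three hypersurfaces are vertical (the all-horizontal case follows by symmetry, swapping the roles of $x$ and $y$). The key geometric observation is that a vertical hypersurface $H(c_a, c_b)$ is entirely contained in the open vertical strip $\{(x,y) : \min(x_a,x_b) < x < \max(x_a,x_b)\}$: indeed, from Section~\ref{subsec:typehyper} its two vertical half-lines sit at abscissas $(x_a+x_b)/2 \pm |y_a-y_b|/2$, which both lie strictly between $x_a$ and $x_b$ since $|y_a-y_b| < |x_a-x_b|$ for a vertical hypersurface, and the connecting diagonal segment inherits this containment. Relabelling so that $x_1 < x_2 < x_3$, the strips $(x_1,x_2)$ and $(x_2,x_3)$ are disjoint, so $H(c_1,c_2) \cap H(c_2,c_3) = \emptyset$. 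For the two remaining pairs, I would use the elementary fact that a common point of two of the three hypersurfaces is equidistant from all three candidates and hence lies on the third; thus a non-empty intersection of any other pair would force $H(c_1,c_2) \cap H(c_2,c_3) \neq \emptyset$, a contradiction.

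For the second case, I may assume without loss of generality that $H(c_1,c_2)$ and $H(c_1,c_3)$ are vertical and $H(c_2,c_3)$ is horizontal (the other sub-case is treated symmetrically). By the equidistance argument above, the three-way intersection equals $H(c_1,c_2) \cap H(c_2,c_3)$, so it suffices to show that this two-way intersection is a single point. From Section~\ref{subsec:typehyper}, the vertical hypersurface $H(c_1,c_2)$ is the graph $x = h_{12}(y)$ of a continuous piecewise-linear function of $y$ whose slopes lie in $\{-1,0,+1\}$ and which is constant outside a bounded interval; symmetrically $H(c_2,c_3)$ is the graph $y = g_{23}(x)$ of a continuous piecewise-linear function of bounded range with slopes in $\{-1,0,+1\}$. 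A common point corresponds to a zero of $F(y) := g_{23}(h_{12}(y)) - y$. The chain rule gives $F'(y) = g_{23}'(h_{12}(y)) \cdot h_{12}'(y) - 1 \in \{-2,-1,0\}$ wherever $F$ is differentiable, so $F$ is (weakly) decreasing.

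Since $g_{23}$ is bounded, we have $F(y) \to +\infty$ as $y \to -\infty$ and $F(y) \to -\infty$ as $y \to +\infty$, so the intermediate value theorem yields at least one zero. The zero set of a continuous non-increasing function is either empty, a singleton, or a closed interval; the last possibility would produce infinitely many points in $H(c_1,c_2) \cap H(c_2,c_3)$, which is excluded by the standing assumption following Lemma~\ref{lemma:noinfiniteinter}. Hence $F$ has exactly one zero, as required. The main subtlety is the slope computation for the parametrisations $h_{12}$ and $g_{23}$ from the rectangle-based description of Section~\ref{subsec:typehyper}; once that is in hand, the monotonicity-plus-IVT argument is essentially automatic.
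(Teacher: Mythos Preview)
Your proof is correct. The first bullet matches the paper's argument almost verbatim: both observe that a vertical hypersurface $H(c_a,c_b)$ has all its $x$-coordinates strictly between $x_a$ and $x_b$, deduce that $H(c_1,c_2)\cap H(c_2,c_3)=\emptyset$ after ordering by abscissa, and then propagate emptiness to the other two pairs via the equidistance fact (the paper's Lemma~\ref{lemma:inter}).

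For the second bullet the routes diverge slightly. The paper simply asserts that a vertical and a horizontal hypersurface meet in exactly one point under the standing non-degeneracy assumption of Lemma~\ref{lemma:noinfiniteinter}, implicitly relying on the type-by-type case analysis carried out in the proof of Proposition~\ref{prop:nb_intersections} (a $V^\pm$ versus $H^\pm$ split). You instead parametrise the vertical hypersurface as $x=h(y)$ and the horizontal one as $y=g(x)$, observe that both have slopes in $\{-1,0,1\}$, and conclude that $F(y)=g(h(y))-y$ is non-increasing with the correct limits, so IVT plus the exclusion of an interval of zeros (again via Lemma~\ref{lemma:noinfiniteinter}) gives a unique intersection. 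This is a clean, uniform argument that avoids the four-subtype case split; the paper's version is shorter because it outsources the work to the earlier proposition. Both appeals to non-degeneracy are of the same nature, so neither approach is strictly more self-contained.
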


The proof of this Proposition (see Appendix~\ref{app:prop4}) uses the following easy lemma, that we will use in some other proofs as well.

\begin{lem}\label{lemma:inter}
Given three points $c_1$, $c_2$ and $c_3$, we have: 
$$ H(c_1, c_2) \cap H(c_1, c_3) \cap H(c_2, c_3) = H(c_i, c_j) \cap H(c_j, c_k)$$ 
for all $i,j,k$ such that $\{ i,j, k \} = \{ 1, 2, 3 \}$.
\end{lem}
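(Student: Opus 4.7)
The plan is to observe that the statement follows directly from the transitivity of equality of distances, independently of the norm being used. The triple intersection is trivially contained in any pairwise intersection, so the nontrivial inclusion to establish is the reverse: that any pairwise intersection $H(c_i,c_j) \cap H(c_j,c_k)$ is already contained in the third hypersurface $H(c_i,c_k)$.

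First, I would recall the definition: a point $p \in H(c_a, c_b)$ means exactly that $\|p - c_a\|_{\ell_1} = \|p - c_b\|_{\ell_1}$. Fix a labelling $\{i,j,k\}=\{1,2,3\}$ and take $p \in H(c_i, c_j) \cap H(c_j, c_k)$. Then
\[
\|p - c_i\|_{\ell_1} = \|p - c_j\|_{\ell_1} \quad \text{and} \quad \|p - c_j\|_{\ell_1} = \|p - c_k\|_{\ell_1},
\]
so by transitivity $\|p - c_i\|_{\ell_1} = \|p - c_k\|_{\ell_1}$, i.e., $p \in H(c_i, c_k)$. Hence $H(c_i, c_j) \cap H(c_j, c_k) \subseteq H(c_i, c_k)$, which yields
\[
H(c_i, c_j) \cap H(c_j, c_k) \subseteq H(c_1, c_2) \cap H(c_1, c_3) \cap H(c_2, c_3).
\]

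The reverse inclusion is immediate, since any triple intersection is contained in each of its pairwise sub-intersections. Combining both inclusions gives the claimed equality for every choice of $i,j,k$ with $\{i,j,k\}=\{1,2,3\}$. There is no real obstacle here: the argument is purely set-theoretic and uses only the definition of $H(\cdot,\cdot)$ together with transitivity of equality, so it is valid for any norm (in particular $\ell_1$), which is exactly what is needed later in the proof of Proposition~\ref{prop:triangle_intersection}.
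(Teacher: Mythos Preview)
Your proof is correct and follows essentially the same approach as the paper: both argue that the triple intersection is trivially contained in the pairwise one, and that the reverse inclusion follows from transitivity of the distance equalities defining the hypersurfaces. The only cosmetic difference is that the paper indexes the pairwise intersection as $H(c_i,c_j)\cap H(c_i,c_k)$ rather than $H(c_i,c_j)\cap H(c_j,c_k)$, which is immaterial by symmetry.
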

\begin{proof}
The left-right inclusion is obvious. For the right-left inclusion, without loss of generality, assume that $i\!=\!1$, $j\!=\!2$ and $k\!=\!3$, and consider $x \in H(c_i, c_j) \cap H(c_i, c_k)$. Then, $$\| x - c_1 \|_{\ell_1} = \| x - c_2 \|_{\ell_1} = \| x - c_3 \|_{\ell_1} $$
because
$$\begin{array}{l} x\!\in\!H(c_i, c_j) \Rightarrow \| x - c_i \|_{\ell_1} = \| x - c_j \|_{\ell_1},\\ x\!\in\!H(c_i, c_k)\Rightarrow \| x - c_i \|_{\ell_1} = \| x - c_k \|_{\ell_1}.\end{array}$$ Hence, $x \in H(c_1, c_2) \cap H(c_1, c_3) \cap H(c_2, c_3)$. 
\end{proof}

The next result is a direct corollary of Proposition \ref{prop:triangle_intersection} (see Appendix~\ref{app:cor1} for the proof). 
\begin{cor}\label{cor:int}
Given three points $c_i$, $c_j$, $c_k$, the \hyps $H(c_i, c_j)$ and $H(c_i, c_k)$ intersect in at most one point. In other words, \textcolor{black}{given four points  $c_i$, $c_j$, $c_k$ and $c_l$}, if two hypersurfaces $H(c_i, c_j)$ and $H(c_k, c_l)$ intersect in two different points, then $c_i, c_j, c_k$ and $c_l$ are all distinct.
\end{cor}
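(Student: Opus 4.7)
The plan is to reduce the first (and main) assertion directly to Proposition~\ref{prop:triangle_intersection} via Lemma~\ref{lemma:inter}. The crucial observation is that the two hypersurfaces $H(c_i, c_j)$ and $H(c_i, c_k)$ already share the candidate $c_i$, so I can work inside the triangle formed by the three points $c_i, c_j, c_k$. By Lemma~\ref{lemma:inter} applied to this triple, any point of $H(c_i, c_j) \cap H(c_i, c_k)$ is automatically equidistant from $c_j$ and $c_k$, so
$$H(c_i, c_j) \cap H(c_i, c_k) \;=\; H(c_i, c_j) \cap H(c_i, c_k) \cap H(c_j, c_k).$$
Proposition~\ref{prop:triangle_intersection} then tells us that this triple intersection is either empty (when the three hypersurfaces all have the same orientation, i.e.\ all vertical or all horizontal) or is a single point (in the mixed orientation case, two of one type and one of the other). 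In either case, $H(c_i, c_j) \cap H(c_i, c_k)$ contains at most one point.

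For the second, equivalent formulation, I would argue contrapositively. Suppose that among $c_i, c_j, c_k, c_l$ not all four are distinct. Since the hypersurfaces $H(c_i, c_j)$ and $H(c_k, c_l)$ are themselves distinct, we cannot have $\{c_i, c_j\} = \{c_k, c_l\}$, so exactly one coincidence across the two pairs can occur; up to relabelling, assume $c_i = c_k$. Then $H(c_k, c_l) = H(c_i, c_l)$, and we are looking at two hypersurfaces sharing the candidate $c_i$. By the first part of the corollary, their intersection contains at most one point, contradicting the hypothesis that $H(c_i, c_j) \cap H(c_k, c_l)$ contains two distinct points. Hence $c_i, c_j, c_k, c_l$ must all be distinct.

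Because both Proposition~\ref{prop:triangle_intersection} and Lemma~\ref{lemma:inter} do all the geometric work, there is no real obstacle here: the proof is a pure syntactic reduction. The only care needed is the trivial case analysis over which of the four candidates might coincide in the second part, handled by symmetry.
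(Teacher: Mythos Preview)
Your proof is correct and takes essentially the same approach as the paper: both reduce to Lemma~\ref{lemma:inter} (so that $H(c_i,c_j)\cap H(c_i,c_k)$ equals the triple intersection with $H(c_j,c_k)$) and then invoke Proposition~\ref{prop:triangle_intersection} to bound the triple intersection by one point. The paper phrases it as a two-line contradiction and leaves the ``in other words'' reformulation implicit, whereas you argue directly and spell out the contrapositive for the second part, but this is only a stylistic difference.
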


Proposition \ref{prop:triangle_intersection} can be reformulated by giving conditions on the relative positions of $c_1, c_2$ and $c_3$ rather than the types of \hyps $H(c_1, c_2), H(c_1, c_3)$ and $H(c_2, c_3)$. For this reformulation, let us first define the following notion of parallelogram associated with the positions of 2 candidates (see Figure~\ref{fig:zone_interdit_l1} for an illustration).

\begin{df}\label{def:diag_rect}
Let $c_i$ and $c_j$ be two candidates, and $(x_i, y_i)$, $(x_j, y_j)$ their positions in the two-dimensional plane. Let us denote by:
\begin{itemize}
    \item $d^+_i = \{(x,y) \vert y = x - x_i + y_i \}$ the "+" diagonal going through the point $c_i$,
     \item $d^-_i = \{(x,y) \vert y = - x + x_i + y_i \}$ the "-" diagonal going through the point $c_i$, 
    \item $d^+_j = \{(x,y) \vert y = x - x_j + y_j \}$ the "+" diagonal going through the point $c_j$,
     \item $d^-_j = \{(x,y) \vert y = - x + x_j + y_j \}$ the "-" diagonal going through the point $c_j$. 
\end{itemize}
Let us call $A$ the intersection point of $d^+_i$ and $d^-_j$ and $B$ the intersection point of $d^-_i$ and $d^+_j$. We call \textit{parallelogram determined by $c_i$ and $c_j$} the parallelogram whose vertices are $c_i, A, c_j$ and  $B$, and we denote by $\drectCiCj$ the interior of the parallelogram\footnote{Note that as we consider non degenerated profile following Lemma~\ref{lemma:type1}, no point (besides $c_i$ and $c_j$) lies on one of the 4 diagonals - and in particular on the boarder of the parallelogram.}. \end{df}
\begin{figure}[tb]
    \centering
    \begin{tikzpicture}
   \begin{axis}
   [axis x line=bottom,axis y line = left, 
   grid = major,
   axis equal image,
   ytick = {1,2,3,4,5,6,7,8,9,10,11},
   xtick = {1,2,3,4,5,6,7,8,9,10,11},
   xmin=0,
   xmax=11,
   ymin=0,
   ymax=11,
   nodes near coords,
   point meta=explicit symbolic]
   \addplot+[only marks] coordinates{(3,3)[$c_i$] (7,5)[$c_j$] (4,2)[$B$] (6,6)[$A$]};
    \addplot+[mark = none, red, thick] coordinates{(0,0) (9,9)} node[xshift = -0.5cm] {$d^+_i: y = x - x_i + y_i$};
    \addplot+[mark = none, blue, thick] coordinates{(0,6) (6,0)} node[above,pos=1] {$d^-_i: y = - x + x_i + y_i$};
    \addplot+[mark = none, thick] coordinates{(2,0) (9,7)} node[xshift = -0.1cm, yshift = -0.2cm] {$d^+_j: y = x - x_j + y_j$};
     \addplot+[mark = none, color = brown, thick] coordinates{(9,3) (3,9)} node[above,pos=1] {$d^-_j: y = - x + x_j + y_j$};
    \end{axis}
    \end{tikzpicture}
    \caption{The parallelogram determined by $c_i$ and $c_j$. 
    }
    \label{fig:zone_interdit_l1}
\end{figure}
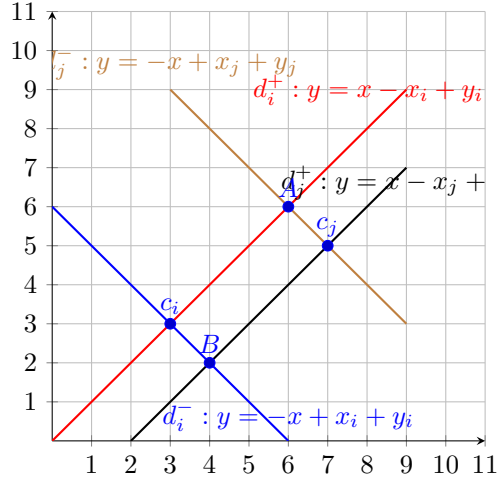

We are now able to reformulate Proposition~\ref{prop:triangle_intersection} as Proposition~\ref{prop:diag_rect} below.

\begin{prop}\label{prop:diag_rect}
Given three points $c_1 = (x_1, y_1)$, $c_2 = (x_2, y_2)$ and $c_3 = (x_3, y_3)$:
\begin{itemize}
    \item If $c_1$, $c_2$ or $c_3$ is inside the parallelogram determined by the two other points, then $H(c_1,c_2)$, $H(c_1, c_3)$ and $H(c_2, c_3)$ do not (pairwise) intersect. 
    \item Otherwise, the intersection of the three \hyps is a unique point. 
\end{itemize} 
\end{prop}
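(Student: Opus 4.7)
The plan is to deduce Proposition~\ref{prop:diag_rect} directly from Proposition~\ref{prop:triangle_intersection} by establishing the geometric equivalence: \emph{one of $c_1, c_2, c_3$ lies strictly inside the parallelogram determined by the two others} if and only if \emph{all three hypersurfaces $H(c_1,c_2), H(c_1,c_3), H(c_2,c_3)$ are of the same type, i.e., all vertical or all horizontal}. Once this equivalence is in hand, the ``inside parallelogram'' case of Proposition~\ref{prop:diag_rect} matches the ``all same type'' case of Proposition~\ref{prop:triangle_intersection} (giving no pairwise intersection), while the ``otherwise'' case will correspond to configurations with two hypersurfaces of one type and the third of the opposite type, which by Proposition~\ref{prop:triangle_intersection} meet in a unique point.

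The main tool will be the $45^\circ$ change of coordinates $u = x+y$, $v = x - y$. Under this transformation, the four diagonals $d^+_i, d^-_i, d^+_j, d^-_j$ bounding the parallelogram $\drect{c_i}{c_j}$ become the four axis-aligned lines $v = v_i$, $u = u_i$, $v = v_j$, $u = u_j$ (where $u_i = x_i + y_i$ and $v_i = x_i - y_i$), so $\drect{c_i}{c_j}$ becomes the axis-aligned open rectangle with opposite corners at the $(u,v)$-images of $c_i$ and $c_j$. Hence $c_k \in \drect{c_i}{c_j}$ if and only if $u_k$ lies strictly between $u_i$ and $u_j$ and $v_k$ strictly between $v_i$ and $v_j$. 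On the other hand, from the classification of Section~\ref{subsec:typehyper}, $H(c_i, c_j)$ is vertical iff $|x_i - x_j| > |y_i - y_j|$, and the identity
\[ (x_i - x_j)^2 - (y_i - y_j)^2 = (u_i - u_j)(v_i - v_j) \]
shows that this amounts to $u_i - u_j$ and $v_i - v_j$ having the same sign; symmetrically, $H(c_i, c_j)$ is horizontal iff the two differences have opposite signs.

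Combining these two observations, ``all three hypersurfaces vertical'' means that sorting the three points by $u$-coordinate yields the same ordering as sorting by $v$-coordinate (a monotone increasing chain in $(u,v)$), while ``all three horizontal'' corresponds to the opposite orderings (a monotone decreasing chain). In either monotone case, the middle point by $u$ is also the middle by $v$, so both its coordinates are strictly between those of the other two, i.e., it lies in the rotated rectangle---equivalently, in the parallelogram determined by the other two. Conversely, in any non-monotone configuration no point has both coordinates strictly between the others', and a quick inspection of the three sign products $(u_i - u_j)(v_i - v_j)$ confirms that exactly two pairwise hypersurfaces share a type while the third is of the opposite type, so Proposition~\ref{prop:triangle_intersection} yields a unique intersection point. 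The main point requiring care is the geometric verification that $\drect{c_i}{c_j}$ becomes the bounding rectangle of $c_i$ and $c_j$ in the rotated coordinates; once this is cleanly set up, everything else reduces to bookkeeping on Proposition~\ref{prop:triangle_intersection}.
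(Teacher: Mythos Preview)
Your argument is correct and establishes exactly the equivalence needed to invoke Proposition~\ref{prop:triangle_intersection}. The $45^\circ$ change of coordinates $u=x+y$, $v=x-y$ is the key idea: it turns $\drect{c_i}{c_j}$ into the axis-aligned open rectangle spanned by $(u_i,v_i)$ and $(u_j,v_j)$, so ``$c_k$ lies inside the parallelogram of the other two'' becomes ``$c_k$ is the median of the three points in \emph{both} $u$ and $v$''; and the identity $(x_i-x_j)^2-(y_i-y_j)^2=(u_i-u_j)(v_i-v_j)$ translates ``$H(c_i,c_j)$ vertical/horizontal'' into a sign condition. The equivalence ``some point in the parallelogram $\Leftrightarrow$ all three hypersurfaces of the same type'' then falls out immediately, and the non-monotone case necessarily yields a $2$--$1$ split of types (this is just the observation that a permutation of three elements has $0$, $1$, $2$, or $3$ inversions, with $0$ and $3$ being the monotone cases).

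The paper proves the same result by a direct case analysis: fixing (w.l.o.g.) $c_2\in\drect{c_1}{c_3}$, it distinguishes the four possible types of $H(c_1,c_3)$ and in each case checks the inequalities on the diagonals $d_1^\pm,d_3^\pm$ by hand to conclude that $H(c_1,c_2)$ and $H(c_2,c_3)$ share that type; the second bullet is handled by partitioning the plane into the nine regions cut out by the four diagonals and checking, region by region, which hypersurface is of the opposite orientation. Your coordinate change subsumes all of this case work in a single algebraic identity and a permutation argument, which is cleaner and makes the underlying combinatorics (the orderings in $u$ versus $v$) transparent. The paper's approach is more pedestrian but has the virtue of staying entirely in the original $(x,y)$ picture and matching the figures already drawn.
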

The proof of Proposition~\ref{prop:diag_rect} can be found in Appendix~\ref{app:prop5}. This proposition has a direct consequence on the preferences within an $\ell_1$-Euclidean profile. It is given in the following corollary, which will be used in Section~\ref{sec:n>=5} to show an upper bound on the number of candidates ranked last by at least one voter in a $\ell_1$-Euclidean profile. 
\begin{cor}\label{cor:jamais_dernier}
Let $\mathcal{P}\!=\!(V, C)$ be an \lunprofile profile, and consider three candidates $c_1\!=\!(x_1, y_1)$, $c_2\!=\!(x_2, y_2)$ and $c_3\!=\!(x_3, y_3)$ in a given $\ell_1$-Euclidean representation of $\mathcal{P}$. If $c_2\!\in\!\drect{c_1}{c_3}$, then there is no voter $v\!\in\!V$ for whom both $c_1\!>_v\!c_2$ and $c_3\!>_v\!c_2$. In other words, $c_2$ is never ranked last among $c_1, c_2, c_3$.
\end{cor}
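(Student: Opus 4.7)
The plan is to argue by contradiction, assuming there is a voter $v\!\in\!V$ with $f(v)\!\in\!D(c_1,c_2)\cap D(c_3,c_2)$, and to deduce from Proposition~\ref{prop:diag_rect} that this intersection is actually empty.

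First I would invoke Proposition~\ref{prop:diag_rect}: since $c_2\!\in\!\drect{c_1}{c_3}$, the hypersurfaces $H(c_1,c_2)$ and $H(c_2,c_3)$ do not intersect. By Lemma~\ref{lemma:type1} we may further assume that both \hyps are of type~1, hence each is a simple unbounded continuous curve; being disjoint, together they partition $\mathbb{R}^2$ into exactly three open connected regions. Every such region must lie on one definite side of $H(c_1,c_2)$ and on one definite side of $H(c_2,c_3)$, so among the four possible combinations $D(c_1,c_2)\cap D(c_2,c_3)$, $D(c_1,c_2)\cap D(c_3,c_2)$, $D(c_2,c_1)\cap D(c_2,c_3)$ and $D(c_2,c_1)\cap D(c_3,c_2)$, exactly one is empty while the other three are precisely the three regions.

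Next I would locate $c_1,c_2,c_3$ among these combinations. Clearly $c_1\!\in\!D(c_1,c_2)$, $c_2\!\in\!D(c_2,c_1)\cap D(c_2,c_3)$ and $c_3\!\in\!D(c_3,c_2)$. The remaining point is to show $\|c_1-c_2\|_1\!<\!\|c_1-c_3\|_1$ (so that $c_1\!\in\!D(c_2,c_3)$) and $\|c_3-c_2\|_1\!<\!\|c_3-c_1\|_1$ (so that $c_3\!\in\!D(c_2,c_1)$). The cleanest way to obtain these two strict inequalities is the change of variables $(u,w)=(x+y,x-y)$: in the rotated coordinates the $\ell_1$ norm becomes the $\ell_\infty$ norm, and the parallelogram $\drect{c_1}{c_3}$ becomes the open axis-aligned rectangle having $c_1$ and $c_3$ as opposite corners. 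Thus $c_2$ lies in the interior iff $u_2,w_2$ lie strictly between the corresponding coordinates of $c_1$ and $c_3$, which immediately yields both inequalities.

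Hence $c_1,c_2,c_3$ populate three distinct combinations, namely $D(c_1,c_2)\cap D(c_2,c_3)$, $D(c_2,c_1)\cap D(c_2,c_3)$ and $D(c_2,c_1)\cap D(c_3,c_2)$; the remaining combination $D(c_1,c_2)\cap D(c_3,c_2)$ must be the empty one, contradicting $f(v)$ being in it. The main obstacle I anticipate is the planar-topology claim that two disjoint unbounded simple curves partition the plane into exactly three regions; a short Jordan-curve argument on the one-point compactification of $\mathbb{R}^2$ should suffice. An alternative that sidesteps this topological fact is to prove directly in rotated coordinates that $\|p-c_2\|_1 \leq \max(\|p-c_1\|_1,\|p-c_3\|_1)$ for every $p\!\in\!\mathbb{R}^2$, using coordinate-wise convexity of $|t-\cdot|$; this immediately contradicts the strict inequalities implied by $f(v)\!\in\!D(c_1,c_2)\cap D(c_3,c_2)$.
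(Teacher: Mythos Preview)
Your proposal is correct and takes a genuinely different route from the paper's proof. The paper invokes Proposition~\ref{prop:diag_rect} to conclude that all three hypersurfaces $H(c_1,c_2)$, $H(c_1,c_3)$, $H(c_2,c_3)$ are pairwise disjoint, hence all vertical or all horizontal; it then (assuming vertical and $x_1<x_2<x_3$) argues by a short case analysis that the left-to-right order must be $H(c_1,c_2),H(c_1,c_3),H(c_2,c_3)$, and reads off from the four resulting strips that $c_2$ is never last. Your main argument uses only the two hypersurfaces $H(c_1,c_2)$ and $H(c_2,c_3)$: from their disjointness you extract a pigeonhole statement (exactly one of the four side-combinations is empty), and then the rotated-coordinates inequalities place $c_1,c_2,c_3$ in the three non-empty ones, forcing $D(c_1,c_2)\cap D(c_3,c_2)=\emptyset$. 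This is more conceptual and avoids the ordering case analysis, at the cost of the small topological claim you flag; note that this claim is easy here because each type~1 hypersurface separates the plane, so one of them lies entirely on one side of the other, which immediately gives the ``exactly one empty quadrant'' conclusion without invoking Jordan curves in the compactification.

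Your alternative route---showing directly that $\|p-c_2\|_{\ell_1}\le\max(\|p-c_1\|_{\ell_1},\|p-c_3\|_{\ell_1})$ via the $(u,w)=(x+y,x-y)$ substitution and coordinatewise convexity of $|\cdot|$---is in fact the cleanest of the three: it bypasses Proposition~\ref{prop:diag_rect} and all hypersurface geometry entirely, and yields the statement in two lines. If you were writing this up, that is the version I would lead with.
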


\begin{proof}
Assume that $c_2$ is inside the parallelogram determined by $c_1$ and $c_3$. Proposition \ref{prop:diag_rect} implies that $H(c_1,c_2)$, $H(c_1,c_3)$ and $H(c_2,c_3)$ \textcolor{black}{do not (pairwise) intersect. Hence, they} are all horizontal, or all vertical  \textcolor{black}{(as a vertical \hyp always intersects a horizontal one)}. Without loss of generality, assume that all three \hyps are vertical, and that $x_1 < x_2 < x_3$. As each point $(x,y)$ of $H(c_i, c_j)$ satisfies $x_i < x < x_j$, we have $H(c_1, c_2)$ on the left of $H(c_2, c_3)$. \\ \\
\noindent
We now show by contradiction that $H(c_1, c_3)$ lies between these two hypersurfaces. Assume the left-to-right order of \hyps is $H(c_1, c_3), H(c_1, c_2), H(c_2, c_3)$. As $c_1$ lies in the leftmost area, it is necessarily the top-ranked candidate there.  
The second-ranked candidate in this area must be $c_3$, \textcolor{black}{the leftmost \hyp being $H(c_1, c_3)$}. Thus, the ranking of the leftmost area is $c_1\!>\!c_3\!>\!c_2$. \textcolor{black}{By moving from the leftmost to the rightmost area, we obtain consecutively (by crossing the \hyps one by one) the four following rankings:  $(c_1,c_3,c_2)$ (the leftmost one), $(c_3,c_1,c_2)$ (after crossing $H(c_1, c_3)$), $(c_3, c_2, c_1)$ (after crossing $H(c_1, c_2)$) and finally $(c_2, c_3, c_1)$ (the rightmost one, after crossing $H(c_2, c_3)$)}. We get a contradiction: as $c_3$ lies in the rightmost area (because we have $x_1 < x_2 < x_3$), it must be a top-ranked candidate there. \\ \\
\noindent 
The case where $H(c_1, c_3)$ is the rightmost \hyp can be treated similarly. Hence, the only possible order of \hyps is $H(c_1, c_2), H(c_1, c_3), H(c_2, c_3)$, and we see, with similar arguments as previously, that $c_2$ is never ranked last. 
\end{proof}

Note that Proposition \ref{prop:nb_intersections} only gives the possible number of intersection points between two \hyps\hspace{-0.095cm}, however, it does not specify the conditions in which each of the cases appears. The following result (see Appendix~\ref{app:prop6} for the proof) gives some more precise statement, which will be needed in the next sections in order to compute, based on geometrical arguments, the size of $\ell_1$-Euclidean profiles. 
\begin{prop}\label{prop:nb_intersections_l1_c4}
Given four points $c_1, c_2, c_3$ and $c_4$, there is at most one pair of \hyps $H(c_i, c_j), H(c_k, c_l)$ (with $\{ i,j,k,l\} = \{ 1,2,3,4 \}$) intersecting in two distinct points. 
\end{prop}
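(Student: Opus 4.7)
By Corollary~\ref{cor:int}, any two hypersurfaces sharing a common candidate meet in at most one point, so a two-point intersection can only occur for one of the three ``matching pairs'' of disjoint hypersurfaces: $P_1 = \{H(c_1,c_2), H(c_3,c_4)\}$, $P_2 = \{H(c_1,c_3), H(c_2,c_4)\}$, and $P_3 = \{H(c_1,c_4), H(c_2,c_3)\}$. The problem thus reduces to showing that at most one of these three pairs has a two-point intersection.

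The first preliminary step is to establish that a vertical and a horizontal $\ell_1$-hypersurface meet in at most one point (under the genericity guaranteed by Lemma~\ref{lemma:noinfiniteinter}). Using the classification of Section~\ref{subsec:typehyper}, a vertical hypersurface is the graph of a continuous, piecewise-linear function $x = f(y)$ with slope profile $(0,\pm 1,0)$, and a horizontal one is the graph of $y = g(x)$ with the same profile. The intersections correspond bijectively to the fixed points of $g \circ f$, and a case-by-case check on the four sign combinations shows that $g(f(y)) - y$ is generically strictly monotone and tends to opposite signs at $y\to\pm\infty$, hence has exactly one zero. Consequently, if a pair of hypersurfaces meets in two points, both must share an orientation.

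Assume now for contradiction that $P_1$ and $P_2$ each meet in two points. By the preliminary step, all four hypersurfaces involved share an orientation, and up to the symmetry $x\leftrightarrow y$ we may take them all vertical, so $\Delta x_{ij} > \Delta y_{ij}$ for $(i,j)\in\{(1,2),(3,4),(1,3),(2,4)\}$. For each such pair, denote by $J_{ij} = \bigl[(x_i+x_j)/2 - \Delta y_{ij}/2,\ (x_i+x_j)/2 + \Delta y_{ij}/2\bigr]$ the $x$-range of the middle diagonal segment of $H(c_i,c_j)$, so that its two vertical half-lines lie at the endpoints of $J_{ij}$. A parallel graph-of-$f(y)$ monotonicity argument shows that two vertical hypersurfaces with same-slope diagonals meet in two points exactly when their $J$-intervals \emph{strictly cross} (each contains exactly one endpoint of the other) and the $y$-heights of the diagonals are compatible, while two vertical hypersurfaces with opposite-slope diagonals meet in at most one point.

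The main obstacle is then to derive a contradiction from the simultaneous crossing of $(J_{12},J_{34})$ and $(J_{13},J_{24})$. Expanding the endpoint-ordering inequalities in terms of $x_1,x_2,x_3,x_4$ and the $\Delta y_{ij}$'s, and combining them with $\Delta x_{ij} > \Delta y_{ij}$, produces incompatible strict orderings of the four $x$-coordinates. Using the symmetries that exchange $c_i \leftrightarrow c_j$ inside each pair and swap $P_1 \leftrightarrow P_2$, the case analysis reduces to a small number of representative configurations, each yielding a direct arithmetic contradiction.
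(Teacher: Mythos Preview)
Your reduction via Corollary~\ref{cor:int} to the three matching pairs is correct, and your preliminary step is right: under the genericity of Lemma~\ref{lemma:noinfiniteinter}, a vertical and a horizontal hypersurface meet in exactly one point, so any pair that meets twice must consist of two hypersurfaces of the same orientation.

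The gap is in the next sentence. From the preliminary step you conclude that ``all four hypersurfaces involved share an orientation.'' But the preliminary step only gives you that the two hypersurfaces \emph{within each pair} share an orientation: $H(c_1,c_2)$ and $H(c_3,c_4)$ are both vertical (say), and $H(c_1,c_3)$ and $H(c_2,c_4)$ are both vertical \emph{or both horizontal}. Nothing so far rules out the mixed configuration where $P_1$ consists of two vertical hypersurfaces and $P_2$ of two horizontal ones. Your interval-crossing argument on the $J_{ij}$'s is set up only for the all-vertical case and does not apply here, so this configuration is simply untreated.

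This missing case is not a formality. In the paper's proof it occupies the entire second half: with $H(c_1,c_2),H(c_3,c_4)$ vertical of type $V^-$ and the second pair horizontal, one must separately dispose of $\{H(c_1,c_4),H(c_2,c_3)\}$ (done via a preference-consistency argument forcing the two horizontal hypersurfaces to lie in incompatible vertical orders on the far left and far right) and of $\{H(c_1,c_3),H(c_2,c_4)\}$ (done by a coordinate computation when both are $H^-$, and by exhibiting a cyclic preference $c_1>c_2>c_4>c_3>c_1$ in a nonempty region when both are $H^+$). None of these contradictions is captured by your $x$-interval crossing framework, since for horizontal hypersurfaces the relevant ``middle segment'' intervals live on the $y$-axis, and the interaction between a vertical pair and a horizontal pair is of a different nature. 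You need either to add this mixed-orientation case as a separate analysis, or to give an independent argument that forces $P_1$ and $P_2$ to have the same orientation before invoking your $J$-interval machinery.
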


\section{Euclidean profiles on 4 candidates in the plane}\label{sec:n=4}

As we have seen, all the profiles with 3 candidates are $\ell_2$-Euclidean and $\ell_1$-Euclidean. We focus here on the case with 4 candidates. In Section~\ref{subsec:sizeprofile}, we study the maximum size of $\ell$-Euclidean profiles (for $\ell=\ell_1$ and $\ell=\ell_2$). In Section~\ref{subsec:carac}, we provide a concise characterization of $\ell_2$-Euclidean profiles. 

\subsection{Maximum size of a Euclidean profile on 4 candidates}\label{subsec:sizeprofile}

\cite{Bennett1960} gave a recursive formula to compute the maximum cardinality of $\ell_2$-Euclidean profiles in $\mathbb{R}^d$. For $d\!=\!2$ and 4 candidates, their formula gives the following result:  
\begin{prop}[\citealp{Bennett1960}]\label{prop:formule_profile_max_l2}
The max cardinality of a $\ell_2$-Euclidean profile on 4 candidates is 18.
\end{prop}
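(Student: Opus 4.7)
The plan is to translate the problem into counting chambers of the line arrangement formed by the perpendicular bisectors of the 4 candidates. Fix any $\ell_2$-Euclidean representation with candidate positions $f(c_1),\ldots,f(c_4)$. Each hypersurface $H(c_i,c_j)$ is the perpendicular bisector of $[f(c_i),f(c_j)]$, hence a straight line in $\mathbb{R}^2$, and the strict ranking of a voter is determined entirely by which open half-plane of each of the $\binom{4}{2}=6$ bisectors the voter lies in. Thus distinct rankings correspond to distinct open chambers of the arrangement of these 6 lines, and the maximum size of the profile, over all choices of candidate positions, equals the maximum number of chambers over all such 6-line arrangements arising from 4 points.

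Next I would invoke the standard formula for the number of chambers of an arrangement of $L$ lines in the plane,
\[ R \;=\; 1 + L + \sum_{p}(m_p - 1), \]
where $p$ ranges over intersection points and $m_p$ denotes the number of lines through $p$. For 6 lines in general position this gives $R = 1 + 6 + \binom{6}{2} = 22$. The crucial geometric input is that for every triple $\{c_i,c_j,c_k\}$ the three bisectors $H(c_i,c_j)$, $H(c_i,c_k)$, $H(c_j,c_k)$ concur at the circumcenter of the triangle $c_ic_jc_k$. Such a forced triple intersection contributes $3-1=2$ to the sum instead of the $3\cdot(2-1)=3$ that three generic double points would contribute, so it costs exactly one chamber. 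Since there are $\binom{4}{3}=4$ such triples, we lose at least 4 chambers, yielding the upper bound $R \leq 22 - 4 = 18$.

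For achievability it suffices to pick $c_1,c_2,c_3,c_4$ in generic position: no three collinear (so each circumcenter is well defined), no four concyclic (so the 4 circumcenters are pairwise distinct), and such that the three pairs of bisectors sharing no common candidate, namely $\{H(c_1,c_2),H(c_3,c_4)\}$, $\{H(c_1,c_3),H(c_2,c_4)\}$ and $\{H(c_1,c_4),H(c_2,c_3)\}$, are non-parallel and yield three double points distinct from one another and from the four circumcenters. Any small perturbation of a non-cocyclic quadrilateral satisfies these conditions, and the formula then gives exactly $R = 1 + 6 + 4\cdot 2 + 3\cdot 1 = 18$. Placing one voter in each of the 18 open chambers produces a profile of 18 pairwise distinct preferences.

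The main obstacle is the careful bookkeeping of multiplicities when applying the chamber-counting formula: identifying the 4 forced circumcenter concurrences and showing that each one costs exactly one chamber relative to the generic count. Once this is set up, the upper bound follows immediately and the matching lower bound is a routine genericity argument.
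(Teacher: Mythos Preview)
The paper does not supply its own proof of this proposition; it is quoted as a known result of \cite{Bennett1960}. Your argument is correct, and in fact it is precisely the computation the paper performs implicitly in the proof of Theorem~\ref{th:3max}: there too one identifies the four $3$-intersections $I_{i,j,k}$ (the circumcenters of the four candidate triples) and the three residual $2$-intersections $H(c_i,c_j)\cap H(c_k,c_l)$ with $\{i,j,k,l\}=\{1,2,3,4\}$, which in a generic configuration yields exactly $18$ chambers.

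One small point worth tightening is the upper bound. The phrasing ``each forced concurrence costs one chamber relative to the generic count'' is a heuristic; to make it a proof you can write, for any arrangement of the six bisectors,
\[
R \;=\; 1 + L + \sum_p\binom{m_p}{2} - \sum_p\binom{m_p-1}{2}\;\le\; 22 - \sum_p\binom{m_p-1}{2},
\]
and then observe that the four forced circumcenter concurrences always force $\sum_p\binom{m_p-1}{2}\ge 4$. The degenerate cases are easy to dispatch: if two circumcenters coincide then all four candidates are concyclic and all six bisectors meet at a single point, giving $\binom{5}{2}=10\ge 4$; and if three candidates are collinear the corresponding three bisectors are parallel, which removes three intersecting pairs from $\sum_p\binom{m_p}{2}$ while the remaining three circumcenters still contribute at least $3$ to $\sum_p\binom{m_p-1}{2}$, so $R\le 16$. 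With this bookkeeping your proof is complete.
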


We examine this question for the norm $\ell_1$, and show that the maximum cardinality is 19 (Theorem~\ref{th:19}). The core of the proof is to show that it is at most 19 (Lemma~\ref{lemma:atmost19}): this is done by counting the (maximal) number of areas delimited by hypersurfaces. For this, we use several results of Section~\ref{sec:prop}, as well as Euler's formula for planar graphs.
An explicit construction of a $\ell_1$-Euclidean profile with 19 preferences is then given in Lemma~\ref{lemma:19}, which shows that the upper bound of Lemma~\ref{lemma:atmost19} is tight.

\begin{lem}\label{lemma:atmost19}
Any \lunprofile profile on 4 candidates has at most 19 (pairwise distinct) preferences. 
\end{lem}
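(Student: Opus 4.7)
My plan is to bound the number of distinct preferences by the number of connected regions of the plane cut out by the six hypersurfaces $H(c_i,c_j)$, one per candidate pair. Indeed, within any such region the sign of $\|p-c_i\|_1-\|p-c_j\|_1$ is constant for every pair, so voters lying in the same region realize the same preference and distinct regions give distinct preferences; bounding the number of regions therefore bounds the number of distinct preferences. I would then count regions by Euler's formula on the one-point compactification $S^2=\mathbb{R}^2\cup\{\infty\}$, in which all 12 unbounded rays of the hypersurfaces terminate at the single vertex $\infty$. By Lemmas~\ref{lemma:type1} and~\ref{lemma:noinfiniteinter}, I may assume every hypersurface is of type~1 (so has exactly two bend points) and that any two hypersurfaces meet in at most 2 points.

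To count vertices of the arrangement, I would classify them as follows: (a) $12$ bend points ($2$ per hypersurface), each of degree $2$; (b) the vertex $\infty$, of degree $6\cdot 2=12$; (c) \emph{triangle vertices}, obtained when the three hypersurfaces associated with a triple of candidates meet at a common point --- by Proposition~\ref{prop:triangle_intersection} each triple yields either $0$ or exactly $1$ such vertex, of degree $6$, so there are at most $t\le\binom{4}{3}=4$ of them; and (d) \emph{simple intersections} of the three pairs of hypersurfaces with disjoint candidate-sets, each of degree $4$ --- by Proposition~\ref{prop:nb_intersections_l1_c4} at most one such pair contributes $2$ intersections and the other two pairs at most $1$ each, so there are at most $s\le 2+1+1=4$ of them. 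Hence $V=t+s+13$, and by the handshake lemma
\[
2E \;=\; 6t+4s+2\cdot 12+12 \;=\; 6t+4s+36, \qquad E \;=\; 3t+2s+18.
\]
Since every hypersurface passes through $\infty$, the arrangement is a connected planar graph on $S^2$, and Euler's formula $V-E+F=2$ gives
\[
F \;=\; 2-V+E \;=\; 2t+s+7 \;\le\; 2\cdot 4+4+7 \;=\; 19,
\]
which is the desired bound: each of the $F$ faces of the arrangement corresponds to at most one preference.

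The main obstacle is making sure the argument carries through in the presence of degeneracies. By Lemma~\ref{lemma:inter}, any point lying on three hypersurfaces corresponding to distinct pairs that are not a ``triangle'' forces all six hypersurfaces to be concurrent (a ``full'' center); in that degenerate case a direct application of Euler gives $F\le 12$, strictly below $19$. Any other local degeneracy (a bend lying on another hypersurface, a tangential intersection, collapse of the two intersections of a non-sharing pair, etc.) merges equal numbers of vertices and edges and hence preserves or decreases $F$. Therefore the bound $F\le 19$ holds for every $\ell_1$-Euclidean representation, and the lemma follows.
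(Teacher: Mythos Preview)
Your argument is correct and follows essentially the same route as the paper: both proofs bound the number of regions via Euler's formula, using Proposition~\ref{prop:triangle_intersection} (or Lemma~\ref{lemma:inter}) to get at most four triple-points and Proposition~\ref{prop:nb_intersections_l1_c4} to get at most four ``disjoint-pair'' crossings, arriving at the same bound of~$19$.

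The main technical difference is in how the planar graph is set up. The paper takes as vertices only the intersection points and handles the $12$ unbounded regions by an ad hoc correction ($n_z\le n_e-n_v+13$); you instead pass to the one-point compactification $S^2$, add $\infty$ and the $12$ bend points as honest degree-$2$ vertices, and apply Euler's formula directly. Your setup is cleaner: connectivity is immediate (every hypersurface passes through $\infty$), the handshake count is transparent, and the identity $F=7+2t+s$ drops out without having to reason about ``outgoing half-lines'' as the paper does.

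One small point: your last paragraph disposes of degeneracies by asserting that they ``merge equal numbers of vertices and edges and hence preserve or decrease~$F$''. This is not literally true in every case (e.g.\ a bend point coinciding with a transversal crossing changes the local combinatorics in a way that is awkward to bookkeep). The safer argument, which is also what the paper does for $4$-intersections, is simply to perturb: any such coincidence can be destroyed by an arbitrarily small move of one candidate without changing the profile, so it suffices to prove the bound in the generic situation you already analysed. With that phrasing the argument is complete.
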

\begin{proof}
To prove this proposition, given an $\ell_1$-Euclidean representation of a profile $\mathcal{P}$, we define a graph whose vertices are all \hyp intersections, and \textcolor{black}{where there is an edge between two intersections (denoted by $I_1$ and $I_2$) if and only if both $I_1$ and $I_2$ lie on the same hypersurface, and there is any other intersection on the segment of extremities $I_1$ and $I_2$} (see Figure~\ref{fig:euler} for an illustration).

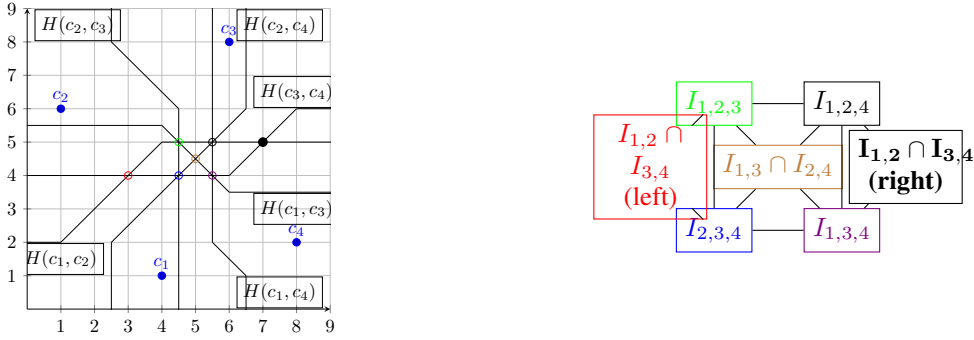
\begin{figure}[tb]
    \begin{tabular}{ccc}
    \begin{minipage}{0.45\textwidth}
   \begin{tikzpicture}[scale = 0.7]
   \begin{axis}
   [axis x line=bottom,axis y line = left, 
   grid = major,
   axis equal image,
   ytick = {1,2,3,4,5,6,7,8,9,10,11},
   xtick = {1,2,3,4,5,6,7,8,9,10,11},
   xmin=0,
   xmax=9,
   ymin=0,
   ymax=9,
   nodes near coords,
   point meta=explicit symbolic]
   \addplot+[only marks] coordinates{(4,1)[$c_1$] (1,6)[$c_2$] (6,8)[$c_3$] (8,2)[$c_4$]};
   \addplot+[mark = none, color = black] coordinates{(4,5.5) (6,3.5)};
   \addplot+[mark = none, color = black] coordinates{(-2,5.5) (4,5.5)};
   \addplot+[mark = none, color = black] coordinates{(6,3.5) (12,3.5)};
   \node[draw] at (axis cs:8,3) {$H(c_1, c_3)$};

     
     \addplot[mark = o, red]coordinates{(3,4)};
     \addplot[mark = o, blue]coordinates{(4.5,4)};
     \addplot[mark = o, green]coordinates{(4.5,5)};
     \addplot[mark = o, brown]coordinates{(5,4.5)};
     \addplot[mark = o, violet]coordinates{(5.5,4)};
      \addplot[mark = o, black]coordinates{(5.5,5)};
       \addplot[mark = *, black, very thick]coordinates{(7,5)};
    
   \draw[color = black] (axis cs:4,5) -- (axis cs:1,2);
   \draw[color = black] (axis cs:-2,2) -- (axis cs:1,2);
   \draw[color = black] (axis cs:4,5)--(axis cs:12,5);
   \node[draw] at (axis cs:1,1.5) {$H(c_1, c_2)$};
    
    \draw[color = black] (axis cs:6.5,1) -- (axis cs:5.5,2);
    \draw[color = black] (axis cs:6.5,-2) -- (axis cs:6.5,1);
    \draw[color = black] (axis cs:5.5,2)--(axis cs:5.5,12);
   
    \node[draw] at (axis cs:7.5,0.5) {$H(c_1, c_4)$};
    
    \draw[color = black] (axis cs:4.5,6) -- (axis cs:2.5,8);
    \draw[color = black] (axis cs:4.5,-2) -- (axis cs:4.5,6);
    \draw[color = black] (axis cs:2.5,8)--(axis cs:2.5,12);
    \node[draw] at (axis cs:1.5,8.5) {$H(c_2, c_3)$};
    
    \draw[color = black] (axis cs:6.5,6) -- (axis cs:2.5,2);
    \draw[color = black] (axis cs:2.5,-2) -- (axis cs:2.5,2);
    \draw[color = black] (axis cs:6.5,6)--(axis cs:6.5,12);
    \node[draw] at (axis cs:7.5,8.5) {$H(c_2, c_4)$};
    
    \draw[color = black] (axis cs:8,6) -- (axis cs:6,4);
    \draw[color = black] (axis cs:-2,4) -- (axis cs:6,4);
    \draw[color = black] (axis cs:8,6)--(axis cs:12,6);
    \node[draw] at (axis cs:8,6.5) {$H(c_3, c_4)$};
   
    \end{axis}
    \end{tikzpicture}
    \end{minipage}
    &
    ~
    &
    \begin{minipage}{0.41\textwidth}
    \begin{tikzpicture}[scale = 0.42]
        \node[draw,color = green] (I123) at (3,7) { $I_{1,2,3}$};
        \node[draw] (I124) at (7,7) { $I_{1,2,4}$};
        \node[draw, color = blue] (I234) at (3,3) { $I_{2,3,4}$};
        \node[draw, color = violet] (I134) at (7,3) { $I_{1,3,4}$};
        \node[draw, color = brown] (I1324) at (5,5) { $I_{1,3} \cap I_{2,4}$};
        \node[draw,text centered,text width=1.25cm] (I1234) at (9,5) {\textbf{$\mathbf{I_{1,2} \cap I_{3,4}}$ (right)}};
        \node[draw,text centered,text width=1.25cm, color = red] (I1234left) at (1,5) { $I_{1,2} \cap I_{3,4}$ (left)};
         
         \draw[] (I123) to (I124);
         \draw[] (I123) to (I234);
         \draw[] (I234) to (I134);
         \draw[] (I134) to (I124);
         \draw[] (I123) to (I1324);
         \draw[] (I123) to (I1234left);
         \draw[] (I1324) to (I134);
         \draw[] (I234) to (I1324);
         \draw[] (I234) to (I1234left);
         \draw[] (I1324) to (I124);
         \draw[] (I124) to (I1234);
         \draw[] (I1234) to (I134);
    \end{tikzpicture}
    \end{minipage}
    \end{tabular}
    \caption{A $\ell_1$-Euclidean representation of a profile $\mathcal{P}$, and its corresponding graph. The intersection of $H(c_i,c_j)$, $H(c_i,c_k)$, $H(c_j,c_k)$ (resp. $H(c_i,c_j)$ and $H(c_k,c_l)$) yields a vertex $I_{i,j,k}$ (resp. $I_{i,j}\cap I_{k,l}$).}
    \label{fig:euler}
\end{figure}

The corresponding graph is by construction planar. We note that each inner face of the graph corresponds to a bounded area in the representation of the profile, while unbounded areas in the representation of the preference profile are all merged into the outer face of the planar graph.

We can then use Euler's formula in the corresponding graph. It states that the number of faces of a planar graph is $n_f = n_e - n_v + 2 $, where $n_e$ is the number of edges and $n_v$ the number of vertices. 

Let us denote by $n_z$ the number of areas in the $\ell_1$-Euclidean representation of the profile. Note that each area corresponds to a single preference, so $n\leq n_z$. For 4 candidates, there are 6 hypersurfaces, leading to \textcolor{black}{at most} 12 unbounded areas. As mentioned above, these 12 unbounded areas are merged into the outer face of the planar graph. As the bounded areas yield $n_f\!-\!1$ inner faces, we have $n_z \textcolor{black}{\leq}n_f-1+12=n_f+11$, and therefore (by Euler's formula):
\begin{equation}\label{eqplan1} 
n_z \textcolor{black}{\leq} n_e - n_v + 13
\end{equation}
\textcolor{black}{If $k$ different \hyps intersect in a common point, we call this point a $k$-intersection.} We can assume there are only 2-intersections and 3-intersections: \textcolor{black}{let $f$ be a representation of a given $\ell_1$-Euclidean profile containing a 4-intersection $I$. As 3 points give only 3 different hypersurfaces, the 4 \hyps intersecting in $I$ involve the four points $f(c_1), f(c_2), f(c_3), f(c_4)$ corresponding to the positions of the four candidates $c_1, c_2, c_3, c_4$. By definition, $I$ is equidistant from all candidates - more formally, we have $\| f(c) - I\|_{\ell_1} = \delta > 0$ for each $c \in \{ c_1, c_2, c_3, c_4\}$. As in the Lemma~\ref{lemma:type1}, we define: 
\begin{equation*}
\varepsilon_d = \min_{v \in V} \min_{c_i, c_j \in C}\left|  \ \| f(v) - f(c_i) \|_{\ell_1} - \| f(v) - f(c_j) \|_{\ell_1} \  \right|.
\end{equation*} As we consider only strict preferences, $\varepsilon_d >0$. We can then add $\varepsilon=\frac{\varepsilon_d}{2}$ to $x_1$.\footnote{More precisely, $\varepsilon$ should be smaller than the minimum of $\varepsilon_d$ and $\min\{\varepsilon_x, \varepsilon_y,\varepsilon_{xy}\}$ as defined in Lemma~\ref{lemma:type1}, to ensure that we do not create one of the degenerations excluded by this Lemma.} Doing that, $I$ will be no more equidistant from all four points and therefore, there will no more be a 4-intersection in such a modified representation. By iterating the processus, all $k$-intersections can be excluded for any $k \geq 4$. }  \\ \\
As there are 4 candidates, there are at most four 3-intersections: 
\begin{itemize}
    \item $I_{123} = H(c_1, c_2) \cap H(c_1, c_3) \cap H(c_2, c_3)$,
    \item $I_{124} = H(c_1, c_2) \cap H(c_1, c_4) \cap H(c_2, c_4)$,
    \item $I_{134} = H(c_1, c_3) \cap H(c_1, c_4) \cap H(c_3, c_4)$,
    \item $I_{234} = H(c_2, c_3) \cap H(c_2, c_4) \cap H(c_3, c_4)$.
\end{itemize}
By Lemma \ref{lemma:inter}, we have covered all intersections of type 
$H(c_i, c_j) \cap H(c_i, c_k)$. That means, all 2-intersections will be of type $H(c_i, c_j) \cap H(c_k, c_l)$ with $i,j,k,l$ pairwise distinct. There are 3 pairs of \hyps of this type: 
\begin{itemize}
    \item $H(c_1, c_2) \cap H(c_3, c_4)$,
    \item $H(c_1, c_3) \cap H(c_2, c_4)$,
    \item $H(c_1, c_4) \cap H(c_2, c_3)$.
\end{itemize}
\textcolor{black}{Each of these three pairs can give us one 2-intersection. In addition, Proposition~\ref{prop:nb_intersections_l1_c4} implies that at most one of these pairs of \hyps can intersect twice. To sum up, we have at most four 2-intersections.} \textcolor{black}{Therefore}  $n_v\!\leq\!8$ (at most four 3-intersections and four 2-intersections).

\textcolor{black}{If $n_v\!=\!8$, there are four 2-intersections and four 3-intersections. Each 2-intersection generates four outgoing half-lines, and each 3-intersection generates six outgoing half-lines. We then get $4 \cdot 4 + 4 \cdot 6 = 40$ outgoing half-lines. However, 12 of them are delimiting outer non-bounded areas, so they are not responsible for any graph edge. Therefore, $40-12 = 28$ half-lines are left for forming edges. We observe that each of these half-lines is used in the creation of exactly one edge, and that each edge is a segment corresponding to the 
common part of exactly two half-lines (as each edge has two extremities which are two different intersections). Thus, we have $n_e = 28/2 = 14$. }
Finally, using Equation~\ref{eqplan1}:
$$ n_z \textcolor{black}{\leq} 14 - 8 + 13 = 19.$$

It is easy to check that if $n_v < 8$, then $n_z < 19$: \textcolor{black}{in fact, each 2-intersection (resp. 3-intersection) generate four (resp. six) outgoing half-lines. In both cases at most a half of them are delimiting outer non-bounded areas - which means that at least half of them has another 2-intersection or 3-intersection lying on it. Therefore, each vertex allows to create at least two edges, so in the Euler formula the benefit of
deleting a vertex is outweighted by the drawback of deleting two edges}. Thus, in any case, $n_z \leq 19$. The size $n$ of the profile therefore satisfies $n\leq n_z\leq 19$. 
\end{proof}

Let us now consider the following profile $\mathcal{P}^*_0$ with 19 voters and 4 candidates (\textcolor{black}{for more conciseness and readability, preferences are in columns, so for instance the first preference is $(c_1,c_2,c_3,c_4)$}).

$$
\mathcal{P}^*_0 = 
\begin{pmatrix}
 c_1 & c_1 & c_1 & c_1 & c_1 & c_1 & c_2 & c_2 & c_2 & c_2 & c_3 & c_3 & c_3 & c_4 & c_4 & c_4 & c_4 & c_4 & c_4\\
 c_2 & c_2 & c_3 & c_3 & c_4 & c_4 & c_1 & c_1 & c_4 & c_4  & c_1 & c_4 & c_4 & c_1 & c_1 & c_2 & c_2 & c_3 & c_3 \\ 
 c_3 & c_4 & c_2 & c_4 & c_2 & c_3 & c_3 & c_4 & c_1 & c_3  & c_4 & c_1 & c_2 & c_2 & c_3 & c_1 & c_3 & c_1 & c_2 \\
 c_4 & c_3 & c_4 & c_2 & c_3 & c_2 & c_4 & c_3 & c_3 & c_1  & c_2 & c_2 & c_1 & c_3 & c_2 & c_3 & c_1 & c_2 & c_1\\
\end{pmatrix} $$

\medskip

\begin{lem}\label{lemma:19}
$\mathcal{P}^*_0$ is \lunprofile\hspace{-0.09cm}. 
\end{lem}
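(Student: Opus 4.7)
The plan is to exhibit an explicit $\ell_1$-Euclidean representation of $\mathcal{P}^*_0$ and verify directly that each of the $19$ areas induced by the six boundary hypersurfaces corresponds to exactly one preference of $\mathcal{P}^*_0$. Since Lemma~\ref{lemma:atmost19} shows that $19$ is the maximum possible number of areas, the counting argument in its proof tells us precisely what the topology of the representation must look like: we must have four $3$-intersections (one for every triple of candidates, i.e.\ all of $I_{123}, I_{124}, I_{134}, I_{234}$ must exist) and four $2$-intersections, meaning that exactly one of the three pairs of disjoint-index hypersurfaces $\{H(c_1,c_2),H(c_3,c_4)\}$, $\{H(c_1,c_3),H(c_2,c_4)\}$, $\{H(c_1,c_4),H(c_2,c_3)\}$ must intersect twice while the other two intersect once. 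This gives a concrete structural target to aim for.

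The first step is to guess a placement of $c_1,c_2,c_3,c_4$ in $\mathbb{R}^2$ achieving this topology. A natural candidate is to start from the representation of the $9$-voter subprofile in Figure~\ref{fig:ex_intro_l1} (which already contains several preferences of $\mathcal{P}^*_0$) and adjust the coordinates so that (i) by Lemma~\ref{lemma:type1} all six hypersurfaces are of type~1, (ii) no two of them intersect in infinitely many points (Lemma~\ref{lemma:noinfiniteinter}), and (iii) all four triple intersections $I_{ijk}$ are realized. Using Proposition~\ref{prop:diag_rect}, realizing every $I_{ijk}$ amounts to ensuring that for every triple $\{c_i,c_j,c_k\}$, no candidate of the triple lies inside the parallelogram determined by the other two; in other words, the four candidates must be placed so that each triple forms a ``well-spread'' configuration.

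The second step is to draw the six hypersurfaces explicitly from the chosen coordinates, following the case analysis of Section~\ref{subsec:typehyper}: each hypersurface consists of a diagonal middle segment between its two vertices on the bounding rectangle, prolonged by two horizontal or vertical half-lines (depending on whether $\Delta x<\Delta y$ or $\Delta x>\Delta y$). Once the picture is drawn, we enumerate the $19$ connected components of the complement of the union of hypersurfaces and read off the preference associated to each area (the ranking is determined by the relative positions of the area with respect to each $D(c_i,c_j)$ half-space).

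The third and most tedious step is the verification itself: check that the multiset of $19$ rankings so obtained coincides with the $19$ preferences listed in $\mathcal{P}^*_0$. This is a finite but potentially error-prone check; the main obstacle is not conceptual but combinatorial — choosing coordinates that simultaneously realize all four $3$-intersections and yield exactly the $19$ specific rankings of $\mathcal{P}^*_0$ (rather than some other list of $19$ rankings on $4$ candidates). In practice, the verification is made easier by the symmetry of the picture: if the four candidates are placed roughly symmetrically, crossing any hypersurface $H(c_i,c_j)$ just swaps $c_i$ and $c_j$ in the ranking, so one can start from any ``corner'' area where the ranking is clear and propagate to neighbouring areas by single swaps, covering all $19$ areas and matching them against the list defining $\mathcal{P}^*_0$.
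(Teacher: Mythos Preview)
Your strategy is exactly the paper's: exhibit explicit coordinates for $c_1,\ldots,c_4$ and verify that the resulting arrangement of the six $\ell_1$-hypersurfaces produces the $19$ rankings of $\mathcal{P}^*_0$. The structural analysis you give (four $3$-intersections, four $2$-intersections with one disjoint pair crossing twice) is correct and matches the counting in Lemma~\ref{lemma:atmost19}.

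The gap is that you never actually carry out the construction. Everything in your proposal is a description of \emph{how} one would find and check a representation, but no coordinates are ever written down, no hypersurfaces are computed, and no rankings are read off. For a lemma whose entire content is an existence claim, the proof \emph{is} the explicit witness; a plan to search for one is not a proof. In particular, your suggestion to ``start from the representation in Figure~\ref{fig:ex_intro_l1} and adjust'' is not obviously going to work without further effort: that configuration was designed to realize nine specific rankings, and there is no a priori reason it already yields all four $3$-intersections or the correct double $2$-intersection, let alone the specific $19$ rankings of $\mathcal{P}^*_0$ rather than some other $19$-element profile.

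The paper's proof simply writes down the positions $f(c_1)=(0,8)$, $f(c_2)=(10,10)$, $f(c_3)=(4,1)$, $f(c_4)=(8,3)$, draws the six hypersurfaces (Figure~\ref{fig:max_profile_l1_c4}), and lets the reader verify that the $19$ areas match the columns of $\mathcal{P}^*_0$; one sample distance computation is worked out. To complete your argument you need to do the same: commit to coordinates and do the finite check.
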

\begin{proof}
Figure~\ref{fig:max_profile_l1_c4} provides a $\ell_1$-Euclidean representation of
$\mathcal{P}^*_0$. Preference $p_1$ corresponds to $(c_1,c_2,c_3,c_4)$ (the first column in $\mathcal{P}^*_0$), preference $p_7$ to $(c_2,c_1,c_3,c_4)$ (the 7th column in $\mathcal{P}^*_0$) as we cross $H(c_1,c_2)$ to go from $p_1$ to $p_7$, etc. \textcolor{black}{The representation function  $f: C \rightarrow \mathbb{R}^2$ leading to  Figure~\ref{fig:max_profile_l1_c4} corresponds to the following positions: $f(c_1) = (0,8), f(c_2) = (10,10), f(c_3) = (4,1)$ and $f(c_4) = (8,3)$. These positions are sufficient to plot the \hyps and to convince ourselves that there are 19 non-empty preference areas. For example, let us place a voter $v$ in the area corresponding to preference $p_2$, concretely on the coordinates $(5.5,8)$. We will check that her preference is indeed $p_2$. Denoting by $P_v$ the position of voter $v$ (i.e., the point $(5.5,8)$), we have
\begin{align*}
\| P_v - f(c_1)\|_{\ell_1} & = | 5.5 - 0| + | 8 - 8| = 5.5, \\
\| P_v - f(c_2)\|_{\ell_1} & = | 5.5 - 10| + | 8 - 10| = 6.5, \\
\| P_v - f(c_1)\|_{\ell_1} & = | 5.5 - 4| + | 8 - 1| = 8.5, \\
\| P_v - f(c_1)\|_{\ell_1} & = | 5.5 - 8| + | 8 - 3| = 7.5. 
\end{align*}
We see that, indeed, the preference of voter $v$ corresponds to $p_2\!=\!(c_1, c_2, c_4, c_3)$. }
\end{proof}

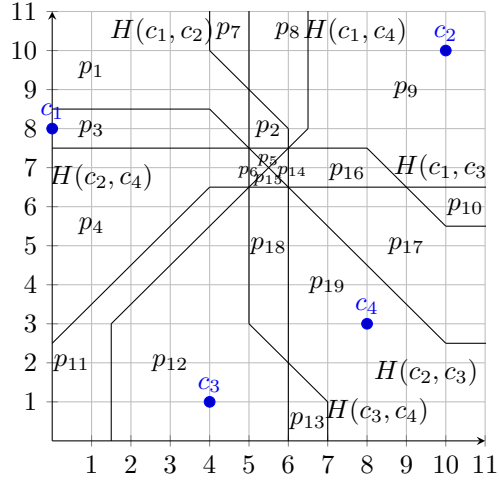
\begin{figure}[tb]
    \centering
    \begin{tikzpicture}[scale=1]
   \begin{axis}
   [axis x line=bottom,axis y line = left, 
   grid = major,
   axis equal image,
   ytick = {1,2,3,4,5,6,7,8,9,10,11},
   xtick = {1,2,3,4,5,6,7,8,9,10,11},
   xmin=0,
   xmax=11,
   ymin=0,
   ymax=11,
   nodes near coords,
   point meta=explicit symbolic]
   \addplot+[only marks] coordinates{(0,8)[$c_1$] (10,10)[$c_2$] (4,1)[$c_3$] (8,3)[$c_4$]};
   \addplot+[mark = none, color = black] coordinates{(6,8) (4,10)};
   \addplot+[mark = none, color = black] coordinates{(4,10) (4,12)};
   \addplot+[mark = none, color = black] coordinates{(6,8) (6,0)};
    \node[] at (axis cs:2.8,10.5) {$H(c_1, c_2)$};
    
   \draw[color = black] (axis cs:0,2.5) -- (axis cs:4,6.5);
   \draw[color = black] (axis cs:-2,2.5) -- (axis cs:0,2.5);
   \draw[color = black] (axis cs:4,6.5)--(axis cs:12,6.5);
    \node[] at (axis cs:10,7) {$H(c_1, c_3)$};
    
    \draw[color = black] (axis cs:6.5,8) -- (axis cs:1.5,3);
    \draw[color = black] (axis cs:1.5,3) -- (axis cs:1.5,-2);
    \draw[color = black] (axis cs:6.5,8)--(axis cs:6.5,12);
    \node[] at (axis cs:7.7,10.5) {$H(c_1, c_4)$};
    
    \draw[color = black] (axis cs:4,8.5) -- (axis cs:10,2.5);
    \draw[color = black] (axis cs:-2,8.5) -- (axis cs:4,8.5);
    \draw[color = black] (axis cs:10,2.5)--(axis cs:12,2.5);
    \node[] at (axis cs:9.5,1.75) {$H(c_2, c_3)$};
    
    \draw[color = black] (axis cs:8,7.5) -- (axis cs:10,5.5);
    \draw[color = black] (axis cs:-2,7.5) -- (axis cs:8,7.5);
    \draw[color = black] (axis cs:10,5.5)--(axis cs:12,5.5);
    \node[] at (axis cs:1.25,6.75) {$H(c_2, c_4)$};
    
    \draw[color = black] (axis cs:7,1) -- (axis cs:5,3);
    \draw[color = black] (axis cs:7,-2) -- (axis cs:7,1);
    \draw[color = black] (axis cs:5,3)--(axis cs:5,12);
    \node[] at (axis cs:8.25,0.75) {$H(c_3, c_4)$};
    
    \node[] at (axis cs:1,9.5) {$p_1$};
    \node[] at (axis cs:5.5,8) {$p_2$};
    \node[] at (axis cs:1,8) {$p_3$};
    \node[] at (axis cs:1,5.5) {$p_4$};
    
     \node[] at (axis cs:5.5,7.2) {\scriptsize $p_5$};
     \node[] at (axis cs:5,6.9) {\scriptsize $p_6$};
     
     \node[] at (axis cs:4.5,10.5) {$p_7$};
     \node[] at (axis cs:6,10.5) {$p_8$};
     \node[] at (axis cs:9,9) {$p_9$};
     \node[] at (axis cs:10.5,6) {$p_{10}$};
     
     \node[] at (axis cs:0.5,2) {$p_{11}$};
     \node[] at (axis cs:3,2) {$p_{12}$};
     \node[] at (axis cs:6.5,0.5) {$p_{13}$};
     \node[] at (axis cs:6.1, 6.9) {\scriptsize $p_{14}$};
     \node[] at (axis cs:5.5, 6.7) {\scriptsize $p_{15}$};
     
     \node[] at (axis cs: 7.5,6.9){$p_{16}$};
     \node[] at (axis cs: 9,5){$p_{17}$};
     
     \node[] at (axis cs: 5.5,5){$p_{18}$};
     \node[] at (axis cs: 7,4){$p_{19}$};
    \end{axis}
    \end{tikzpicture}
    \caption{An \lunprofile representation of a profile with 4 candidates and 19 (pairwise) distinct votes.}
    \label{fig:max_profile_l1_c4}
\end{figure}

As a direct consequence of Lemmata~\ref{lemma:atmost19} and~\ref{lemma:19}, we have the following result, which concludes the section.

\begin{thm}\label{th:19}
The maximum cardinality of an \lunprofile profile on 4 candidates is 19.
\end{thm}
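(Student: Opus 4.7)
The theorem is an immediate consequence of the two preceding lemmata, so my plan is simply to combine them. Lemma~\ref{lemma:atmost19} provides the upper bound: no $\ell_1$-Euclidean profile on 4 candidates has more than 19 pairwise distinct preferences. Lemma~\ref{lemma:19} provides the matching lower bound by exhibiting the explicit profile $\mathcal{P}^*_0$ and witnessing that it is $\ell_1$-Euclidean via the representation in Figure~\ref{fig:max_profile_l1_c4}. Chaining the two inequalities gives the exact value 19, so the proof of Theorem~\ref{th:19} is a one-liner.

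If I had to reconstruct the two halves from scratch, my approach would be as follows. For the lower bound, I would try to place four candidates in generic position (type-1 hypersurfaces only, no infinite intersections, no 4-way intersection) so that the arrangement of the $\binom{4}{2}=6$ boundary hypersurfaces maximizes the number of regions, then read off the 19 preferences and check each by a direct distance computation. For the upper bound, I would model the arrangement as a planar graph (vertices = hypersurface intersections, edges = hypersurface arcs between consecutive intersections), use Lemmata~\ref{lemma:type1} and~\ref{lemma:noinfiniteinter} to reduce to the generic case, and apply Euler's formula $n_f = n_e - n_v + 2$, where the number $n_z$ of preference regions satisfies $n_z \le n_f + 11$ after merging the (at most) 12 unbounded regions into a single outer face.

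The main technical step is the tight counting of $n_v$ and $n_e$. Lemma~\ref{lemma:inter} limits the 3-intersections to one per triple of candidates, hence at most four; Proposition~\ref{prop:nb_intersections_l1_c4} then limits the 2-intersections to at most four (since among the three ``disjoint-index'' pairs of hypersurfaces, at most one pair can intersect twice). So $n_v \le 8$. Counting outgoing half-lines (four per 2-intersection, six per 3-intersection), subtracting the twelve unbounded rays, and halving to get edges yields $n_e \le 14$, hence $n_z \le 14 - 8 + 13 = 19$. I would expect the trickiest part to be verifying that a \emph{smaller} vertex count cannot yield more regions (since removing a vertex also removes at least two incident edges, by Euler's formula the change in $n_z$ is nonpositive), and to ensure the construction in Figure~\ref{fig:max_profile_l1_c4} genuinely realizes all four 3-intersections and four 2-intersections simultaneously.
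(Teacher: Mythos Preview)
Your proposal is correct and mirrors the paper's own proof exactly: the theorem is stated as an immediate consequence of Lemma~\ref{lemma:atmost19} (upper bound via Euler's formula with the vertex/edge count you describe) and Lemma~\ref{lemma:19} (the explicit 19-preference construction $\mathcal{P}^*_0$). Your sketch of how to reconstruct both halves also tracks the paper's arguments faithfully, including the $n_v\le 8$, $n_e\le 14$ count and the observation that fewer vertices cannot increase $n_z$.
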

We note that $\mathcal{P}^*_0$ is another example of a preference profile on 4 candidates which is $\ell_1$-Euclidean but not $\ell_2$-Euclidean (because there are more than 18 preferences).

\subsection{Characterization of $\ell_2$-Euclidean profiles}\label{subsec:carac}

A central question in structured preferences is to determine whether a given profile is structured or not. As we have seen before, with 4 candidates, any profile with more than 18 (resp. 19) preferences is not $\ell_2$-Euclidean (resp. not $\ell_1$-Euclidean). However, there are smaller profiles which are not $\ell_1$- or $\ell_2$-Euclidean (Example~\ref{ex:n=4ell2} gives such a profile for $\ell_2$). 

In the sequel, we give a concise description of all $\ell_2$-Euclidean profiles on 4 candidates, that moreover enables to  \textcolor{black}{easily} determine whether a given profile on 4 candidates is $\ell_2$-Euclidean or not. As noted in the introduction, this result has also been proved by \cite{kamiya2011ranking}. The interest of the proof presented below is that it is based on simpler and purely geometrical arguments. Furthermore, it might be possible to consider the same type of approach to prove a result of the same nature for $m\!\geq \!5$ candidates \emph{in the plane}, while the result by \citeauthor{kamiya2011ranking} only applies for $d\!=\!m\!-\!2$ (thus for 4 candidates in the plane). Also, a similar geometric approach might be useful for the $\ell_1$ norm, for which we conjecture that the profile $\mathcal{P}^*_0$ is the {\it unique} maximal $\ell_1$-Euclidean profile.

Back to $\ell_2$-Euclidean profiles on 4 candidates, we show that the number of maximal Euclidean profiles is very small. More precisely, we prove that there are only 3 maximal $\ell_2$-Euclidean  profiles $\mathcal{P}^*_1$, $\mathcal{P}^*_2$ and $\mathcal{P}^*_3$ (up to a permutation of the candidates), each of them of size 18. Thus, a profile is $\ell_2$-Euclidean if and only if it is a subprofile of $\mathcal{P}^*_1$, $\mathcal{P}^*_2$ or $\mathcal{P}^*_3 $ (up to a permutation of the candidates).

We say that two profiles are {\it isomorphic} if they contain the same set of preferences up to a permutation of the candidates.

Let us consider the three following profiles $\mathcal{P}^*_1$, $\mathcal{P}^*_2$ and $\mathcal{P}^*_3$:

$$
\mathcal{P}^*_1 = 
\begin{pmatrix}
 c_1 & c_1 & c_1 & c_2 & c_2 & c_2 & c_2 & c_2 & c_2 & c_3 & c_3 & c_3 & c_4 & c_4 & c_4 & c_4 & c_4 & c_4 \\
 c_2 & c_2 & c_4 & c_1 & c_1 & c_3 & c_3 & c_4 & c_4 & c_2 & c_2 & c_4 & c_1 & c_1 & c_2 & c_2 & c_3 & c_3 \\ 
 c_3 & c_4 & c_2 & c_3 & c_4 & c_1 & c_4 & c_1 & c_3 & c_1 & c_4 & c_2 & c_2 & c_3 & c_1 & c_3 & c_1 & c_2 \\
 c_4 & c_3 & c_3 & c_4 & c_3 & c_4 & c_1 & c_3 & c_1 & c_4 & c_1 & c_1 & c_3 & c_2 & c_3 & c_1 & c_2 & c_1 \\
\end{pmatrix}, $$
$$
\mathcal{P}^*_2 =
\begin{pmatrix}
 c_1 & c_1 & c_1 & c_1 & c_1 & c_1 & c_2 & c_2 & c_3 & c_3 & c_3 & c_3 & c_3 & c_3 & c_4 & c_4 & c_4 & c_4 \\
 c_2 & c_2 & c_3 & c_3 & c_4 & c_4 & c_1 & c_3 & c_1 & c_1 & c_2 & c_2 & c_4 & c_4 & c_1 & c_1 & c_3 & c_3 \\ 
 c_3 & c_4 & c_2 & c_4 & c_2 & c_3 & c_3 & c_1 & c_2 & c_4 & c_1 & c_4 & c_1 & c_2 & c_2 & c_3 & c_1 & c_2 \\
 c_4 & c_3 & c_4 & c_2 & c_3 & c_2 & c_4 & c_4 & c_4 & c_2 & c_4 & c_1 & c_2 & c_1 & c_3 & c_2 & c_2 & c_1 \\
\end{pmatrix}, $$

$$\mathcal{P}^*_3 =  
\begin{pmatrix}
c_1 & c_1 & c_1 & c_1 & c_2 & c_2 & c_2 & c_2 & c_2 & c_2 & c_3 & c_3 & c_3 & c_3 & c_4 & c_4 & c_4 & c_4 \\
c_2 & c_2 & c_3 & c_4 & c_1 & c_1 & c_3 & c_3 & c_4 & c_4 & c_1 & c_2 & c_2 & c_4 & c_1 & c_2 & c_2 & c_3 \\
c_3 & c_4 & c_2 & c_2 & c_3 & c_4 & c_1 & c_4 & c_1 & c_3 & c_2 & c_1 & c_4 & c_2 & c_2 & c_1 & c_3 & c_2 \\
c_4 & c_3 & c_4 & c_3 & c_4 & c_3 & c_4 & c_1 & c_3 & c_1 & c_4 & c_4 & c_1 & c_1 & c_3 & c_3 & c_1 & c_1 \\
\end{pmatrix}.
$$

\begin{thm}\label{th:3max}
A profile on 4 candidates is $\ell_2$-Euclidean if and only if it is isomorphic to a subprofile of $\mathcal{P}^*_1$, $\mathcal{P}^*_2$ or $\mathcal{P}^*_3$. 
\end{thm}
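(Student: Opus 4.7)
Plan: the proof has two directions, and the bulk of the work lies in the converse.

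\textbf{Sufficiency ($\Leftarrow$).} It suffices to exhibit for each $i \in \{1, 2, 3\}$ an explicit placement of four points in $\mathbb{R}^2$ such that the induced arrangement of perpendicular bisectors yields exactly the 18 rankings of $\mathcal{P}^*_i$, thereby certifying that each $\mathcal{P}^*_i$ is $\ell_2$-Euclidean. Since any subprofile of an $\ell_2$-Euclidean profile is $\ell_2$-Euclidean (just restrict the voter set), and since a candidate relabeling corresponds to relabeling the four points in the representation, this direction follows.

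\textbf{Necessity ($\Rightarrow$).} Suppose $\mathcal{P}$ admits an $\ell_2$-Euclidean representation $f$. The plan is to introduce the \emph{complete} profile $\bar{\mathcal{P}}$ associated with $f$, namely the set of rankings realized by some non-empty area $D_f(v)$ in the arrangement of the six perpendicular bisectors $H(c_i, c_j)$. Then $\mathcal{P} \subseteq \bar{\mathcal{P}}$ and, by Proposition~\ref{prop:formule_profile_max_l2}, $|\bar{\mathcal{P}}| \leq 18$. It therefore suffices to show that, up to candidate relabeling, $\bar{\mathcal{P}}$ equals $\mathcal{P}^*_1$, $\mathcal{P}^*_2$, or $\mathcal{P}^*_3$.

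I would then split by the geometric configuration of the four candidate points: either one candidate lies strictly inside the triangle formed by the other three (\emph{non-convex case}), or all four candidates lie on their convex hull (\emph{convex case}). In the non-convex case, letting $c$ be the interior point, write $c = \sum_i \alpha_i c_i$ as a strict convex combination of the other three. Convexity of $x \mapsto \|v - x\|^2$ yields $\|v - c\|^2 \leq \max_i \|v - c_i\|^2$ for every $v$, so $c$ is never ranked last in any area $D_f(v)$. This excludes exactly the $3! = 6$ rankings in which $c$ is last. A direct computation at a concrete symmetric representative (e.g.\ $c$ at the centroid of an equilateral triangle) shows that the remaining $18$ rankings are all realized and match $\mathcal{P}^*_3$ after relabeling. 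Since the combinatorial type of the arrangement is invariant under any continuous deformation of the four points that does not cross a degeneracy (three collinear points, four concyclic points, or coinciding circumcenters), all non-convex configurations yield the same $\bar{\mathcal{P}}$ up to isomorphism.

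The convex case is the technical heart and I expect it to be the main obstacle, since all four candidates can then be ranked last and the convexity argument fails. The plan is to introduce a binary geometric invariant partitioning convex configurations into two equivalence classes up to candidate relabeling. A natural candidate invariant is the cyclic order along each bisector of its intersection points with the other bisectors, which reduces to recording the relative position of the three ``opposite-pair'' intersections $H(c_i, c_j) \cap H(c_k, c_l)$ (with $\{i,j,k,l\} = \{1,2,3,4\}$) with respect to the four circumcenters (triple intersections of the bisectors of each triangle of candidates). Purely planar arguments, of the same flavor as those developed in Section~\ref{sec:prop} for $\ell_1$, should show that only two such combinatorial types are realizable in convex position, up to candidate permutation; computing $\bar{\mathcal{P}}$ for one representative of each type then identifies them with $\mathcal{P}^*_1$ and $\mathcal{P}^*_2$ respectively. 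As in the non-convex case, a continuous-deformation argument within each sub-case ensures that $\bar{\mathcal{P}}$ is uniquely determined (up to isomorphism) by the sub-case, which completes the proof.
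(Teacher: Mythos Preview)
Your decomposition differs from the paper's: you split on whether the four \emph{candidate} points are in convex position, whereas the paper splits on whether the four \emph{circumcenters} (the 3-intersections $I_{ijk}=H(c_i,c_j)\cap H(c_i,c_k)\cap H(c_j,c_k)$) form a convex quadrilateral or a triangle with one interior point. The paper then labels the arrangement directly in each case and reads off the 18 rankings, obtaining $\mathcal{P}^*_1$ and $\mathcal{P}^*_2$ in the convex-circumcenter case and $\mathcal{P}^*_3$ in the other, with no deformation argument needed.

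Your non-convex case is actually cleaner than you make it: the convexity inequality shows the interior candidate $c$ is never last, so $\bar{\mathcal{P}}$ is contained in the set of $18$ rankings in which $c$ is not last. But $\mathcal{P}^*_3$ \emph{is} precisely the set of $18$ rankings with $c_2$ not last, so after relabeling $c\mapsto c_2$ you get $\bar{\mathcal{P}}\subseteq\mathcal{P}^*_3$ immediately---no representative computation and no deformation is required. This is a genuinely nicer argument than the paper's treatment of its Case~2.

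The genuine gap is your convex case. You correctly identify it as the heart of the matter, but what you write is a plan, not a proof: you neither specify the binary invariant concretely nor verify that it takes exactly two values up to relabeling, and the ``continuous deformation'' step is not justified (you would have to show that the locus of degenerate configurations disconnects the convex stratum into at most two components up to the $S_4$-action, which is itself the whole difficulty). The paper sidesteps all of this: once the four circumcenters are known to form a convex quadrilateral, a short labeling argument fixes all hypersurface labels up to a single binary choice, and reading off the $18$ areas for each choice yields $\mathcal{P}^*_1$ and $\mathcal{P}^*_2$. If you want to complete your route, the cleanest fix is to abandon deformation and instead show directly that when the candidates are in convex position the four circumcenters are also in convex position, then import the paper's labeling argument for that case.
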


\begin{proof}
\os{For any  maximal profile there is a representation of it such that no pair of \hyps (which are simple lines of the plane in the present case) are parallel. In fact, there will be two parallel lines if there are (at least) three aligned candidates, or if two pairs of candidates are the extremities of two parallel segments. In each of these cases, we can always slightly move one of the candidates (using the same technique as in Lemma~\ref{lemma:type1}) so that the two concerned lines are no more parallel and such a modified mapping is still a representation of the given profile. \\ Assuming that,} we have:
\begin{itemize}
    \item One 2-intersection $H(c_i, c_j) \cap H(c_k, c_l)$ for each pair of \hyps with $i,j,k,l$ pairwise distinct. For 4 candidates, it yields three 2-intersections (because there are three such pairs). 
    \item One 3-intersection $I_{i,j,k} = H(c_i, c_j) \cap H(c_i, c_k) \cap H(c_j, c_k)$ for each triple of \hyps with $i,j,k$ pairwise distinct. For a profile on 4 candidates, it yields four 3-intersections (because there are four such triples). 
\end{itemize}

Let us study the relative positions of the 3-intersections in the plane. There are two possible scenarios (see Figure \ref{fig:4_intersections_case_of_fig}): 
\begin{enumerate}
    \item The 3-intersections are the vertices of a convex quadrilateral (left part of Figure~\ref{fig:4_intersections_case_of_fig}). No pair of opposite sides of this quadrilateral can be parallel, otherwise there would be two parallel hypersurfaces, and the profile would not be maximal. 
    \item Three of the 3-intersections are the vertices of a triangle, and the fourth one is inside this triangle (right part of Figure~\ref{fig:4_intersections_case_of_fig}). 
\end{enumerate}

\begin{figure}[tb]
    \centering
    \begin{tikzpicture}[scale = 0.6]
   \begin{axis}
   [
   xmin=0,
   xmax=10,
   ymin=0,
   ymax=10,
   nodes near coords,
   point meta=explicit symbolic]
   \addplot+[only marks] coordinates{(2,2)[] (4,8)[] (6,6)[] (7,3)[]};
    \end{axis}
    \end{tikzpicture}
    \begin{tikzpicture}[scale = 0.6]
   \begin{axis}
   [
   xmin=0,
   xmax=10,
   ymin=0,
   ymax=10,
   nodes near coords,
   point meta=explicit symbolic]
   \addplot+[only marks] coordinates{(2,2)[] (4,8)[] (4,4)[] (7,3)[]};
    \end{axis}
    \end{tikzpicture}
    \caption{The possible relative positions of the 3-intersections in the plane.}
    \label{fig:4_intersections_case_of_fig}
\end{figure}
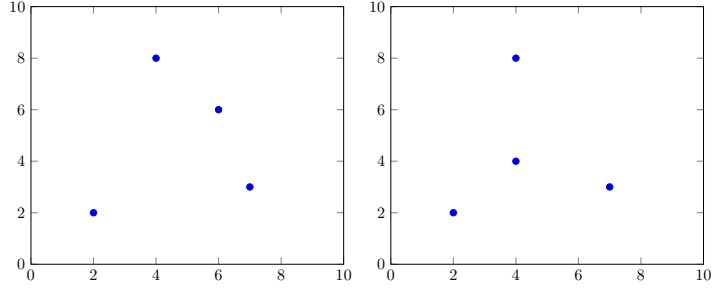
We will now take a closer look to each of these cases, and we will construct all maximal profiles corresponding to each of them. \\ \\
\noindent
\textbf{Case 1:} \\ 
\noindent
Assume that the 3-intersections are the vertices of a convex quadrilateral (see Figure \ref{fig:4_intersections_case_1}), as described above. There are $\binom{4}{2}\!=\!6$ hypersurfaces, each of them goes through exactly two 3-intersections (because, for an \hyp $H(c_i,c_j)$, there are two ways to choose $c_k$ with $k\!\not\in\!\{i,j\}$). \textcolor{black}{We recall that, without loss of generality, we can assume that there is no pair of parallel hypersurfaces.  There are then four \hyps that form the sides of a convex quadrilateral. Each of the two pairs of hypersurfaces corresponding to opposite sides of the quadrilateral results in an intersection outside the quadrilateral, which yields two distinct 2-intersections. The remaining two \hyps represent the diagonals of the quadrilateral, and will hence intersect inside it - it results in the third (and last) 2-intersection. Whatever the positions of the four candidates, if the 3-intersections form a convex quadrilateral, the partitioning of the plane will always look like in Figure~\ref{fig:4_intersections_case_1_with_c} (where $c_1 = (1,5), c_2 = (4,2), c_3 = (6,8)$ and $c_4 = (9,3)$): one 2-intersection lies inside the convex quadrilateral, and the two remaining 2-intersections (of \hyps forming opposite sides of the quadrilateral) outside of it.} 

\os{Note that in Figure~\ref{fig:4_intersections_case_1_with_c} some areas are small. For readability reasons, in what follows, we use instead Figure~\ref{fig:4_intersections_case_1_without_c} (with a similar arrangement of areas) where the areas are larger but without the explicit positions of candidates.}  

\noindent
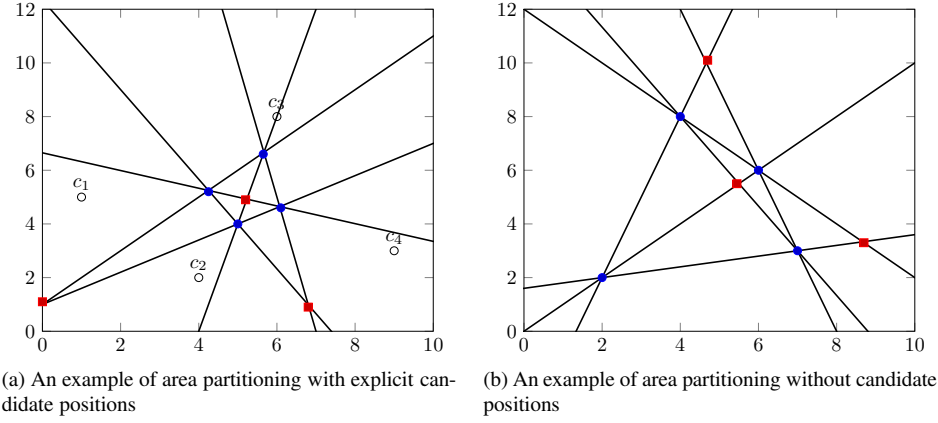
\begin{figure}[tb]
\subfloat[An example of area partitioning with 
explicit candidate positions]{
 \centering
    \begin{tikzpicture}[scale = 0.75]
   \begin{axis}
   [
   xmin=0,
   xmax=10,
   ymin=0,
   ymax=12,
   nodes near coords,
   point meta=explicit symbolic]
    \addplot+[only marks, color = blue] coordinates{(5,4)[] (6.1,4.6)[] (5.65,6.6)[] (4.25,5.2)[]};
   \addplot+[only marks, mark = square*] coordinates{(0,1.1)[] (6.8,0.9)[] (5.2,4.9)[]};
   \addplot+[only marks, mark=o, color=black] coordinates{(1,5)[$c_1$] (4,2)[$c_2$] (6,8)[$c_3$] (9,3)[$c_4$]};

   \draw[shorten >= -10cm, shorten <=-10cm, -, thick](axis cs:0,1)--(axis cs:10,11);
   \draw[shorten >= -10cm, shorten <=-10cm, -, thick](axis cs:0,12.345)--(axis cs:10,-4.355);
   \draw[shorten >= -10cm, shorten <=-10cm, -, thick](axis cs:0,-16)--(axis cs:10,24);
   \draw[shorten >= -10cm, shorten <=-10cm, -, thick](axis cs:0,6.65)--(axis cs:10,3.35);
   \draw[shorten >= -10cm, shorten <=-10cm, -, thick](axis cs:0,35)--(axis cs:10,-15);
   \draw[shorten >= -10cm, shorten <=-10cm, -, thick](axis cs:0,1)--(axis cs:10,7);
   
    \end{axis}
    \label{fig:4_intersections_case_1_with_c}
    \end{tikzpicture}
} 
\quad
\subfloat[An example of area partitioning without candidate positions]{
    \centering
    \begin{tikzpicture}[scale = 0.75]
   \begin{axis}
   [
   xmin=0,
   xmax=10,
   ymin=0,
   ymax=12,
   nodes near coords,
   point meta=explicit symbolic]
   \addplot+[only marks] coordinates{(2,2)[] (4,8)[] (6,6)[] (7,3)[]};
    \addplot+[only marks, mark = square*, color = red] coordinates{(8.7,3.3)[] (5.45,5.5)[] (4.7,10.1)[]};
   \draw[shorten >= -10cm, shorten <=-10cm, -, thick](axis cs:2,2)--(axis cs:4,8);
   \draw[shorten >= -10cm, shorten <=-10cm, -, thick](axis cs:2,2)--(axis cs:6,6);
   \draw[shorten >= -10cm, shorten <=-10cm, -, thick](axis cs:2,2)--(axis cs:7,3);
   \draw[shorten >= -10cm, shorten <=-10cm, -, thick](axis cs:4,8)--(axis cs:6,6);
   \draw[shorten >= -10cm, shorten <=-10cm, -, thick](axis cs:4,8)--(axis cs:7,3);
   \draw[shorten >= -10cm, shorten <=-10cm, -, thick](axis cs:6,6)--(axis cs:7,3);
   
    \end{axis}
    \label{fig:4_intersections_case_1_without_c}
    \end{tikzpicture}
}  
    \caption{Case 1: The plane is divided into 18 areas, with the 3-intersections forming a convex quadrilateral. The candidates are plotted with empty circles, 3-intersections with blue circles and 2-intersections with red squares.}
    \label{fig:4_intersections_case_1}
\end{figure}

To enumerate all possible maximal profiles corresponding to this configuration of the 3-intersections, the \hyps (and hence the intersections) need to be labeled so we can list the preferences corresponding to the different areas (see Figure~\ref{fig:4_intersections_case_1_labeled}). Without loss of generality, we label one of the 3-intersections as $I_{1,2,3}$, and one of the \hyps going through it as $H(c_1, c_2)$. The second 3-intersection involving $H(c_1, c_2)$ is then necessarily $I_{1,2,4}$. The two remaining \hyps going through $I_{1,2,3}$ are $H(c_1, c_3)$ and $H(c_2, c_3)$, that we can arbitrarily label (because it will turn out to be symmetrical). From these labels $I_{1,2,3}$, $I_{1,2,4}$, $H(c_1,c_2)$, $H(c_1,c_3)$ and $H(c_2,c_3)$, we can infer the labels of the two remaining 3-intersections, and so the labels of the remaining hypersurfaces. As mentioned earlier, both ways of labeling $H(c_1, c_3)$ and $H(c_2, c_3)$ are symmetric: it is sufficient to rename $c_1$ as $c_2$ and $c_2$ as $c_1$ to switch from one complete labeling to the other one (see Figure~\ref{fig:4_intersections_case_1_labeled}). Hence, the labels of $H(c_1, c_3)$ and $H(c_2, c_3)$ can be fixed without loss of generality, and there is only one possible complete labeling, up to a renaming of the candidates. 

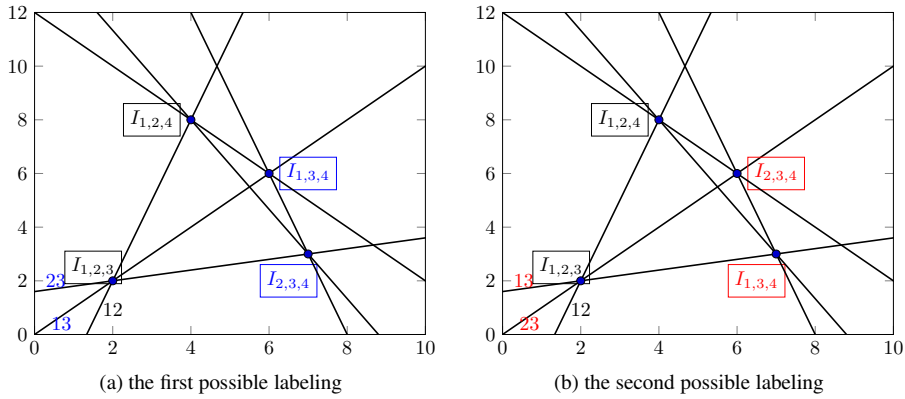
\begin{figure}[tb]
    \centering
    \subfloat[the first possible labeling]{
    \begin{tikzpicture}[scale = 0.75]
   \begin{axis}
   [
   xmin=0,
   xmax=10,
   ymin=0,
   ymax=12,
   nodes near coords,
   point meta=explicit symbolic]
   \addplot+[only marks, black] coordinates{(2,2) (4,8)[] (6,6)[] (7,3)[]};
   \draw[shorten >= -10cm, shorten <=-10cm, -, thick, ](axis cs:2,2)--(axis cs:4,8) node[at start, yshift = -0.5cm] {$12$};
   \draw[shorten >= -10cm, shorten <=-10cm, -, thick](axis cs:2,2)--(axis cs:6,6) node[at start, yshift = -0.75cm, xshift = -0.9cm, blue] {$13$};  
   \draw[shorten >= -10cm, shorten <=-10cm, -, thick](axis cs:2,2)--(axis cs:7,3) node[at start, xshift = -1cm, blue] {$23$}; 
   \node[draw, black] at (axis cs:1.5,2.5) {$I_{1,2,3}$};
   \node[draw, black] at (axis cs:3,8) {$I_{1,2,4}$};
   \node[draw, blue] at (axis cs:7,6) {$I_{1,3,4}$};
   \node[draw, blue] at (axis cs:6.5,2) {$I_{2,3,4}$};
   \draw[shorten >= -10cm, shorten <=-10cm, -, thick](axis cs:4,8)--(axis cs:6,6);
   \draw[shorten >= -10cm, shorten <=-10cm, -, thick](axis cs:4,8)--(axis cs:7,3);
   \draw[shorten >= -10cm, shorten <=-10cm, -, thick](axis cs:6,6)--(axis cs:7,3);
    \end{axis}
    \end{tikzpicture}
    }
    ~
    \subfloat[the second possible labeling]{
    \begin{tikzpicture}[scale = 0.75]
   \begin{axis}
   [
   xmin=0,
   xmax=10,
   ymin=0,
   ymax=12,
   nodes near coords,
   point meta=explicit symbolic]
   \addplot+[only marks, black] coordinates{(2,2) (4,8)[] (6,6)[] (7,3)[]};
   \draw[shorten >= -10cm, shorten <=-10cm, -, thick, ](axis cs:2,2)--(axis cs:4,8) node[at start, yshift = -0.5cm] {$12$};
   \draw[shorten >= -10cm, shorten <=-10cm, -, thick](axis cs:2,2)--(axis cs:6,6)  node[at start, yshift = -0.75cm, xshift = -0.9cm, red] {$23$};
   \draw[shorten >= -10cm, shorten <=-10cm, -, thick](axis cs:2,2)--(axis cs:7,3) node[at start, xshift = -1cm, red] {$13$};
   \node[draw, black] at (axis cs:1.5,2.5) {$I_{1,2,3}$};
   \node[draw, black] at (axis cs:3,8) {$I_{1,2,4}$};
   \node[draw, red] at (axis cs:7,6) {$I_{2,3,4}$};
   \node[draw, red] at (axis cs:6.5,2) {$I_{1,3,4}$};
   \draw[shorten >= -10cm, shorten <=-10cm, -, thick](axis cs:4,8)--(axis cs:6,6);
   \draw[shorten >= -10cm, shorten <=-10cm, -, thick](axis cs:4,8)--(axis cs:7,3);
   \draw[shorten >= -10cm, shorten <=-10cm, -, thick](axis cs:6,6)--(axis cs:7,3);
    \end{axis}
    \end{tikzpicture}
    }
    \caption{Labeled representation: $H(c_i, c_j)$ is noted as $ij$ due to lack of space.}
    \label{fig:4_intersections_case_1_labeled}
\end{figure}

Once the \hyps are labeled, we can list the preferences associated with the different areas. Let us focus on the \textcolor{black}{areas $A_1, A_2, A_3$ and $A_{13}$, as well as on the corresponding }preferences $p_1, p_2, p_3$ and $p_{13}$ in Figure~\ref{fig:labels_zones_c1}. To switch from $p_1$ to $p_2$, candidate $c_3$ is swapped with $c_4$ (because $H(c_3,c_4)$ is crossed \textcolor{black}{between areas $A_1$ and $A_2$}), while $c_4$ is swapped with $c_2$ to switch from $p_2$ to $p_3$ \textcolor{black}{(as $H(c_2, c_4)$ is crossed between areas $A_2$ and $A_3$)}, and finally $c_4$ is swapped with $c_1$ to obtain $p_{13}$. Necessarily, $c_4$ is ranked either in the first or in the last position in $p_1$ (resp. $p_4$), as it is successively swapped with all the remaining candidates. Hence, \textcolor{black}{the area $A_1$} corresponds to one of the following preferences: 
\begin{itemize}
    \item $p_1 = (c_1, c_2, c_3,c_4)$,
    \item $p'_1 = (c_4, c_3, c_2, c_1)$.
\end{itemize}
Once at least one preference is known, we can list all the preferences of the profile. Both profiles $\mathcal{P}\!=\!\{p_1,\ldots,p_{18}\}$ and $\mathcal{P}'\!=\!\{p_1',\ldots,p_{18}'\}$ are listed in Table~\ref{tab:profils_max_4c_1}. 
Profile $\mathcal{P}$ corresponds to $\mathcal{P}^*_1$ in the statement of the theorem, while $\mathcal{P}'$ corresponds to $\mathcal{P}^*_2$.

Note that, for each $1 \leq i \leq 18$, $p_i$ is the ``opposite'' of $p'_i$. Nevertheless, $\mathcal{P}'$ can not be obtained from $\mathcal{P}$ by renaming the candidates: indeed, while in $\mathcal{P}$ candidates $c_1$ and $c_3$ are each ranked first 3 times, and $c_2$ and $c_4$ ranked first 6 times, in $\mathcal{P}'$ in contrast, we have $c_1$ and $c_3$ that are ranked first 6 times, $c_2$  ranked first 2 times and $c_4$  ranked first 4 times. 
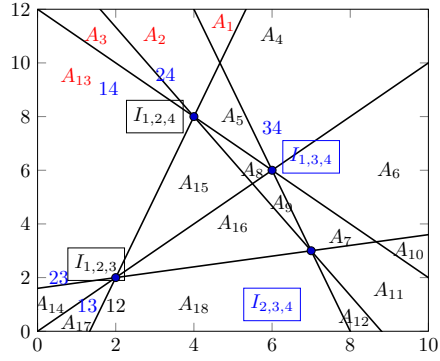
\begin{figure}[tb]
    \centering
    \begin{tikzpicture}[scale = 0.75]
   \begin{axis}
   [
   xmin=0,
   xmax=10,
   ymin=0,
   ymax=12,
   nodes near coords,
   point meta=explicit symbolic]
   \addplot+[only marks, black] coordinates{(2,2) (4,8)[] (6,6)[] (7,3)[]};
   \draw[shorten >= -10cm, shorten <=-10cm, -, thick, ](axis cs:2,2)--(axis cs:4,8) node[at start, yshift = -0.5cm] {$12$};
   \draw[shorten >= -10cm, shorten <=-10cm, -, thick](axis cs:2,2)--(axis cs:6,6) node[at start, yshift = -0.5cm, xshift = -0.5cm, blue]{$13$};
   \draw[shorten >= -10cm, shorten <=-10cm, -, thick](axis cs:2,2)--(axis cs:7,3) node[at start, xshift = -1cm, blue]{$23$};
   \node[draw, black] at (axis cs:1.5,2.5) {$I_{1,2,3}$};
   \node[draw, black] at (axis cs:3,8) {$I_{1,2,4}$};
   \node[draw, blue] at (axis cs:7,6.5) {$I_{1,3,4}$};
   \node[draw, blue] at (axis cs:6,1) {$I_{2,3,4}$};
   \node[red] at (axis cs:1,9.5) {$A_{13}$};
   \node[red] at (axis cs:1.5,11) {$A_3$};
   \node[red] at (axis cs:3,11) {$A_2$};
   \node[red] at (axis cs:4.75,11.5) {$A_1$};
   \node[] at (axis cs:6,11) {$A_4$};
   \node[] at (axis cs:5,8) {$A_5$};
   \node[] at (axis cs:4,5.5) {$A_{15}$};
   \node[] at (axis cs:5.5,6) {$A_8$};
   \node[] at (axis cs:9,6) {$A_6$};
   \node[] at (axis cs:5,4) {$A_{16}$};
   \node[] at (axis cs:6.25,4.75) {$A_{9}$};
   \node[] at (axis cs:7.75,3.5) {$A_{7}$};
   \node[] at (axis cs:0.3,1) {$A_{14}$};
   \node[] at (axis cs:1,0.3) {$A_{17}$};
   \node[] at (axis cs:4,1) {$A_{18}$};
   \node[] at (axis cs:8.1,0.5) {$A_{12}$};
   \node[] at (axis cs:9,1.5) {$A_{11}$};
   \node[] at (axis cs:9.5,3) {$A_{10}$};
   \draw[shorten >= -10cm, shorten <=-10cm, -, thick](axis cs:4,8)--(axis cs:6,6) node[at start, xshift = -1.5cm, yshift = 0.5cm, blue] {$14$};
   \draw[shorten >= -10cm, shorten <=-10cm, -, thick](axis cs:4,8)--(axis cs:7,3) node[at start, xshift =-0.5cm, yshift = 0.75cm, blue] {$24$};
   \draw[shorten >= -10cm, shorten <=-10cm, -, thick](axis cs:6,6)--(axis cs:7,3)
   node[at start, xshift =0cm, yshift = 0.75cm, blue] {$34$};
    \end{axis}
    \end{tikzpicture}
    
    \caption{\textcolor{black}{Listing the different areas $A_1, \ldots, A_{18}$ into which the plane is divided in case 1.}}
    \label{fig:labels_zones_c1}
\end{figure}

\begin{table}[H]
    \centering
    \begin{tabular}{ccc}\toprule
         $i$ & $p_i$ & $p'_i$ \\
         \midrule
         1 & $(c_1, c_2, c_3, c_4)$ & $(c_4, c_3, c_2, c_1)$\\
         2 & $(c_1, c_2, c_4, c_3)$ & $(c_3, c_4, c_2, c_1)$ \\
         3 & $(c_1, c_4, c_2, c_3)$  & $(c_3, c_2, c_4, c_1)$\\
         4 & $(c_2, c_1, c_3, c_4)$ & $(c_4, c_3, c_1, c_2)$\\
         5 & $(c_2, c_1, c_4, c_3)$ & $(c_3, c_4, c_1, c_2)$\\
         6 & $(c_2, c_3, c_1, c_4)$ & $(c_4, c_1, c_3, c_2)$\\
         7 & $(c_2, c_3, c_4, c_1)$ & $(c_1, c_4, c_3, c_2)$\\
         8 & $(c_2, c_4, c_1, c_3)$ & $(c_3, c_1, c_4, c_2)$\\
         9 & $(c_2, c_4, c_3, c_1)$ & $(c_1, c_3, c_4, c_2)$\\
         \bottomrule 
    \end{tabular}
    \begin{tabular}{ccc}\toprule
         $i$ & $p_i$ & $p'_i$ \\
         \midrule
         10 & $(c_3, c_2, c_1, c_4)$ & $(c_4, c_1, c_2, c_3)$\\
         11 &  $(c_3, c_2, c_4, c_1)$ & $(c_1, c_4, c_2, c_3)$\\
         12 & $(c_3, c_4, c_2, c_1)$  & $(c_1, c_2, c_4, c_3)$\\
         13 & $(c_4, c_1, c_2, c_3)$ & $(c_3, c_2, c_1, c_4)$\\
         14 & $(c_4, c_1, c_3, c_2)$ & $(c_2, c_3, c_1, c_4)$\\
         15 &  $(c_4, c_2, c_1, c_3)$ & $(c_3, c_1, c_2, c_4)$\\
         16 & $(c_4, c_2, c_3, c_1)$ & $(c_1, c_3, c_2, c_4)$ \\
         17 & $(c_4, c_3, c_1, c_2)$ & $(c_2, c_1, c_3, c_4)$\\
         18 & $(c_4, c_3, c_2, c_1)$ & $(c_1, c_2, c_3, c_4)$\\
         \bottomrule
    \end{tabular}
    \caption{The two maximal profiles $\mathcal{P}\!=\!\{p_1,\ldots,p_{18}\}$ and $\mathcal{P}'\!=\!\{p_1',\ldots,p_{18}'\}$ obtained in case 1.}
    \label{tab:profils_max_4c_1}
\end{table}

\noindent
\textbf{Case 2: } \\ 
\noindent
\textcolor{black}{To begin, let us denote by $T$ the triangle consisting of areas $A_5, A_6, A_7, A_8, A_9$ and $A_{10}$.} Using the same method as in the previous case, we note that there are two possible rankings for \textcolor{black}{ area $A_1$}  (see Figure \ref{fig:labels_zones_c2}, and the succession of areas \textcolor{black}{$A_1, A_2, A_4$ and $A_{15}$}): 
\begin{itemize}
    \item $p_1 = (c_1,c_2,c_3,c_4)$,
    \item $p'_1 = (c_4, c_3, c_2, c_1)$.
\end{itemize}
However, if $p'_1\!=\!(c_4, c_3, c_2, c_1)$, candidate $c_2$ is ranked in last position inside the triangle \textcolor{black}{$T$} : in fact, none of the \hyps crossing the triangle involves $c_2$. 
Let us now discuss the position of $c_2$ to show that $p_1'$ is not feasible:
\begin{itemize}
    \item \textcolor{black}{Denoting by $D(c_i,c_j)$ the set of points that are closer to $c_i$ than to $c_j$, we have: 
    $$ D(c_2, c_1) = A_1 \cup A_{2} \cup A_{3} \cup A_{4} \cup A_{11} \cup A_{15}.$$ In fact, $c_1$ is preferred to $c_2$ in triangle \textcolor{black}{$T$}.  Therefore, $c_1$ must lie on the same side of $H(c_1, c_2)$ as this triangle, and $c_2$ must then lie on the opposite side of $H(c_1, c_2)$, i.e. on the same side as the area $A_1$.}
    \item Analogously, candidate $c_2$ is necessarily \textcolor{black}{on the same side of} $H(c_2,c_3)$ \textcolor{black}{as the area $A_3$}: 
    $$ D(c_2, c_3) = A_3 \cup A_{11} \cup A_{12} \cup A_{13} \cup A_{14} \cup A_{18}.$$
    \item Finally, candidate $c_2$ is necessarily \textcolor{black}{on the same side} of  $H(c_2, c_4)$ \textcolor{black}{as the area $A_4$}: 
    $$ D(c_2, c_4) =  A_{4} \cup A_{14} \cup A_{15} \cup A_{16} \cup A_{17} \cup A_{18}.$$

\end{itemize}
As $c_2 \!\in\!D(c_2, c_i)$ for each $i\!\in\!\{1,3,4\}$, and as 
$D(c_2, c_1) \cap D(c_2, c_3) \cap D(c_2, c_4) = \emptyset$,
we cannot have $p'_1 = (4,3,2,1)$. 

The case $p_1 = (1,2,3,4)$ is feasible, leading to the profile described in Table~\ref{tab:profils_max_4c_2}, which corresponds to profile $\mathcal{P}^*_3$ in the statement of the theorem. 
\end{proof}

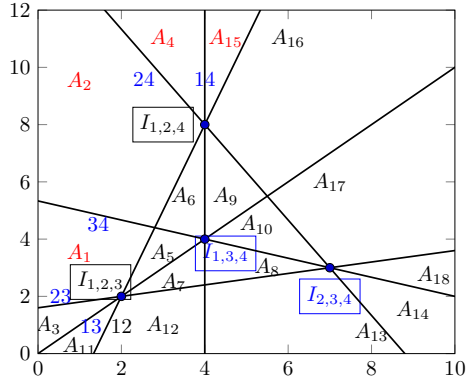
\begin{figure}[tb]
    \centering
    \begin{tikzpicture}[scale = 0.8]
   \begin{axis}
   [
   xmin=0,
   xmax=10,
   ymin=0,
   ymax=12,
   nodes near coords,
   point meta=explicit symbolic]
   \addplot+[only marks, black] coordinates{(2,2) (4,8)[] (4,4)[] (7,3)[]};
   \draw[shorten >= -10cm, shorten <=-10cm, -, thick, ](axis cs:2,2)--(axis cs:4,8) node[at start, yshift = -0.5cm] {$12$};
   \draw[shorten >= -10cm, shorten <=-10cm, -, thick](axis cs:2,2)--(axis cs:4,4) node[at start, yshift = -0.5cm, xshift = -0.5cm, blue]{$13$};
   \draw[shorten >= -10cm, shorten <=-10cm, -, thick](axis cs:2,2)--(axis cs:7,3) node[at start, xshift = -1cm, blue]{$23$};
   \node[draw, black] at (axis cs:1.5,2.5) {$I_{1,2,3}$};
   \node[draw, black] at (axis cs:3,8) {$I_{1,2,4}$};
   \node[draw, blue] at (axis cs:4.5,3.5) {$I_{1,3,4}$};
   \node[draw, blue] at (axis cs:7,2) {$I_{2,3,4}$};
   \node[red] at (axis cs:1,9.5) {$A_2$};
   \node[red] at (axis cs:3,11) {$A_4$};
   \node[red] at (axis cs:4.5,11) {$A_{15}$};
   \node[] at (axis cs:6,11) {$A_{16}$};
   \node[] at (axis cs:3.5,5.5) {$A_6$};
   \node[] at (axis cs:4.5,5.5) {$A_9$};
   \node[] at (axis cs:7,6) {$A_{17}$};
   \node[red] at (axis cs:1,3.5) {$A_1$};
   \node[] at (axis cs:3,3.5) {$A_5$};
   \node[] at (axis cs:5.25,4.5) {$A_{10}$};
   \node[] at (axis cs:3.25,2.5) {$A_7$};
   \node[] at (axis cs:5.5,3) {$A_8$};
   \node[] at (axis cs:0.25,1) {$A_3$};
   \node[] at (axis cs:1,0.25) {$A_{11}$};
   \node[] at (axis cs:3,1) {$A_{12}$};
   \node[] at (axis cs:8,0.75) {$A_{13}$};
   \node[] at (axis cs:9,1.5) {$A_{14}$};
   \node[] at (axis cs:9.5,2.75) {$A_{18}$};
   \draw[shorten >= -10cm, shorten <=-10cm, -, thick](axis cs:4,8)--(axis cs:4,4) node[at start, xshift = -1cm, yshift = 0.75cm, blue] {$24$};
   \draw[shorten >= -10cm, shorten <=-10cm, -, thick](axis cs:4,8)--(axis cs:7,3) node[at start, xshift =0cm, yshift = 0.75cm, blue] {$14$};
   \draw[shorten >= -10cm, shorten <=-10cm, -, thick](axis cs:4,4)--(axis cs:7,3)
   node[at start, xshift =-1.75cm, yshift = 0.25cm, blue] {$34$};
    \end{axis}
    \end{tikzpicture}
    
    \caption{\textcolor{black}{Listing the different areas $A_1, \ldots, A_{18}$ into which the plane is divided in case 2.}}
    \label{fig:labels_zones_c2}
\end{figure}

\begin{table}[H]
    \centering
    \begin{tabular}{cc}
         \toprule
         $i$ & $p_i$  \\
         \midrule
         1 & $(c_1, c_2, c_3, c_4)$ \\
         2 & $(c_1, c_2, c_4, c_3)$ \\
         3 & $(c_1, c_3, c_2, c_4)$ \\
         4 & $(c_1, c_4, c_2, c_3)$ \\
         5 & $(c_2, c_1, c_3, c_4)$ \\
         6 & $(c_2, c_1, c_4, c_3)$ \\
         7 & $(c_2, c_3, c_1, c_4)$ \\
         8 & $(c_2, c_3, c_4, c_1)$ \\
         9 & $(c_2, c_4, c_1, c_3)$ \\
         \bottomrule 
    \end{tabular}
    \begin{tabular}{cc}
         \toprule
         $i$ & $p_i$  \\
        \midrule
         10 & $(c_2, c_4, c_3, c_1)$ \\
         11 &  $(c_3, c_1, c_2, c_4)$\\
         12 & $(c_3, c_2, c_1, c_4)$ \\
         13 & $(c_3, c_2, c_4, c_1)$ \\
         14 & $(c_3, c_4, c_2, c_1)$ \\
         15 &  $(c_4, c_1, c_2, c_3)$ \\
         16 & $(c_4, c_2, c_1, c_3)$ \\
         17 & $(c_4, c_2, c_3, c_1)$\\
         18 & $(c_4, c_3, c_2, c_1)$\\
         \bottomrule
    \end{tabular}
    \caption{The maximal profile obtained in case 2.}
    \label{tab:profils_max_4c_2}
\end{table}

\section{Euclidean profiles on $m\!\geq\!5$ candidates in the plane}\label{sec:n>=5}

Let us now focus on the general case, by \textcolor{black}{giving some results} on the relative expressive power of $\ell_2$-Euclidean and $\ell_1$-Euclidean preference profiles. 
\textcolor{black}{We first note that, as shown in Proposition~\ref{prop:2d},} at most 4 candidates are ranked in last position (by at least one voter), regardless of the number of candidates in the profile. This is in sharp contrast to the $\ell_2$-Euclidean case, in which profiles where each candidate is ranked last at least once can easily be built, as mentioned in the introduction. 
 
This property might indicate that being Euclidean is much more restrictive for $\ell_1$ than for $\ell_2$. We show however that if we are interested in the maximum size of a Euclidean profile, then there is no such difference. We show indeed that the maximum size of a $\ell_1$-Euclidean profile on $m$ candidates is $\Theta(m^4)$ (Theorem~\ref{th:thetam4}), which is the same asymptotic bound as the one found by \citet{Bennett1960} for $\ell_2$.

Actually, a precise formula can be easily derived from their result: this maximal size is precisely $ \frac{m(3m-10)(m-1)(m+1)}{24} + m(m-1) + 1$. While such a precise formula seems to be tricky to establish for $\ell_1$ and is left as an open question, we show that the asymptotical bound is the same:  
\begin{thm}\label{th:thetam4}
The maximum size of an \lunprofile profile in $\mathbb{R}^2$ over $m$ candidates is in $\Theta(m^4)$. 
\end{thm}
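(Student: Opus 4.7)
The theorem consists of matching bounds $O(m^4)$ and $\Omega(m^4)$, which I would treat separately.

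\textbf{Upper bound ($O(m^4)$).} I would generalize the counting argument of Lemma~\ref{lemma:atmost19}. Given any $\ell_1$-Euclidean representation of a profile on $m$ candidates, consider the planar subdivision induced by the $\binom{m}{2}$ hypersurfaces. By Lemma~\ref{lemma:type1} each hypersurface is piecewise linear with three linear pieces, and by Proposition~\ref{prop:nb_intersections} together with Lemma~\ref{lemma:noinfiniteinter}, any two distinct hypersurfaces meet in at most two points. The associated planar graph thus has $V=O(m^4)$ intersection vertices; each hypersurface is cut into $O(m^2)$ sub-pieces by its intersections with the others, giving $E=O(m^4)$ edges (plus $O(m^2)$ unbounded rays, which contribute $O(m^2)$ unbounded regions). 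Applying Euler's formula exactly as in the proof of Lemma~\ref{lemma:atmost19} yields $O(m^4)$ regions, and since each distinct preference occupies at least one region, the profile size is $O(m^4)$.

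\textbf{Lower bound ($\Omega(m^4)$).} I would exhibit an explicit placement of $m$ candidates whose arrangement contains $\Omega(m^4)$ regions; placing a voter in each region then produces an $\ell_1$-Euclidean profile of that size. The key is to realize, for a constant fraction of the $\binom{m}{4}$ candidate-quadruples, the maximal configuration of Proposition~\ref{prop:nb_intersections_l1_c4}: a pair of disjoint-index hypersurfaces crossing in two points. A natural choice is a generic (slightly perturbed) placement of $m$ points on a convex curve such as a circle. A case analysis based on the hypersurface classification of Section~\ref{subsec:typehyper} should confirm that the maximal 4-point configuration is attained for $\Omega(\binom{m}{4})$ quadruples, giving $V=\Omega(m^4)$ arrangement vertices. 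Since every such intersection has degree at least $4$, we have $E\geq 2V$, and Euler's formula gives $F\geq E-V+O(1)=\Omega(m^4)$ regions (the $O(m^2)$ unbounded regions being negligible).

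\textbf{Main obstacle.} The upper bound is a near-mechanical generalization of Lemma~\ref{lemma:atmost19}, and the Euler-formula bookkeeping is essentially the same. The real difficulty lies in the lower bound: one must \emph{certify} a construction that actually attains $\Omega(\binom{m}{4})$ 2-intersections rather than merely $O(m^3)$ of them. The technical heart is a delicate geometric case analysis: for each quadruple $\{c_i,c_j,c_k,c_l\}$, determine which pairing $H(c_i,c_j),H(c_k,c_l)$ of disjoint-index hypersurfaces produces a 2-intersection, and show that this occurs for a positive fraction of quadruples in the chosen placement. A probabilistic/averaging argument over random placements in a compact region is an alternative to an explicit construction, but either approach requires careful perturbation (as in Lemmata~\ref{lemma:type1} and~\ref{lemma:noinfiniteinter}) to avoid the degenerate coincidences that would collapse several regions into one.
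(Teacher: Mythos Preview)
Your upper bound is essentially the paper's argument: count intersection points of the $\binom{m}{2}$ hypersurfaces (at most two per pair by Lemma~\ref{lemma:noinfiniteinter}), and use Euler's formula to bound the number of regions by $O(m^4)$.

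For the lower bound, however, you take a harder route than necessary, and the key step is left unverified. You propose to place candidates on a perturbed circle and argue that a constant fraction of the $\binom{m}{4}$ quadruples realize the \emph{double} intersection of Proposition~\ref{prop:nb_intersections_l1_c4}. You correctly flag this as the main obstacle, and indeed you never carry out the required case analysis; it is not clear that the circle placement actually yields $\Omega(m^4)$ two-point crossings rather than, say, $O(m^3)$ of them.

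The paper sidesteps this difficulty entirely with a much simpler construction. Double intersections are irrelevant: it suffices that $\Theta(m^2)$ of the hypersurfaces are horizontal and $\Theta(m^2)$ are vertical, since every horizontal hypersurface meets every vertical one (in a single point), already giving $\Theta(m^4)$ crossings and hence $(H_m+1)(V_m+1)=\Theta(m^4)$ regions in a grid-like arrangement. The paper achieves this by placing the candidates iteratively: each even-indexed candidate $c_{2k}$ is placed with $x$-coordinate between all previous $x$-coordinates and $y$-coordinate far above all previous $y$-coordinates, so that every new hypersurface $H(c_i,c_{2k})$ is horizontal; odd-indexed candidates are placed symmetrically so that all new hypersurfaces are vertical. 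This yields $H_m,V_m\in\Theta(m^2)$ with no case analysis and no need for 2-intersections at all.

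In short, your instinct that the lower bound is the hard part is right for your approach, but the paper's construction shows the difficulty is avoidable: aim for many single horizontal--vertical crossings rather than for double crossings.
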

\begin{proof}[sketch of proof]

We first show that the size of such a profile is in $O(m^4)$. There are $\frac{m(m-1)}{2}$ hypersurfaces. \textcolor{black}{With a non-degenerated profile, each pair of \hyps intersects at most twice, hence, there are at most $2(\frac{m(m-1)}{2})^2$ points of intersections. As in the case of norm $\ell_2$, we have at most 3 \hyps intersecting in one point. If a point is at the intersection of 2 (resp 3) hypersurfaces, it is incident to (i.e., a vertex of) 4 areas (resp. 6 areas). Then, as each area has at least one intersection point in its border, the number of areas is upper bounded by $6$ times the number of intersection points, i.e., in $O(m^4)$.} \\ \\
\noindent
Let us now show a profile for which this bound is reached. The idea is quite straightforward: as there are only vertical or horizontal hypersurfaces, and as each vertical and each horizontal \hyp intersect, the positions of candidates $c_1$ to $c_m$ will be iteratively fixed in such a way that approximately half of \hyps are vertical and half are horizontal. The number of intersections will then be in $\Theta(m^4)$, and the construction ensures that the number of areas is in $\Theta(m^4)$. 

The explicit construction of this profile is deferred to Appendix~\ref{app:thetam4}. 
\end{proof}

\section{Future work}\label{sec:conclu}

Because of their novelty, multiple avenues of research regarding $\ell_1$-Euclidean preference profiles can be considered. For instance, we conjecture that there is a unique maximal $\ell_1$-Euclidean preference profile for four candidates (and numerical tests seem to confirm this), but it remains to be proved. A broader research question is to investigate the existence of a general formula giving the maximal size of a $\ell_1$-Euclidean preference profile (as there is for $\ell_2$). Regarding the computational aspects, \cite{peters2017recognising} proved that the problem of recognising $\ell_1$-Euclidean preference profiles in $\mathbb{R}^d$ is in NP, but a more specific complexity class remains to be determined, and efficient recognition procedures are still to be proposed.

Although $\ell_2$-Euclidean preferences have been more studied than $\ell_1$-Euclidean preferences, various works can also be considered following those presented here, among which the identification of the maximal $\ell_2$-Euclidean preference profiles in $\mathbb{R}^2$ for $m\!\ge\!5$, or a thorough study of $\ell_2$-Euclidean preference profiles in $\mathbb{R}^3$.


\bibliographystyle{spbasic}      
\bibliography{prefeuclid}


%
%

\appendix

\newpage

\section{Missing proofs of Section~\ref{sec:prop}}\label{app:sec2}

\subsection{Proof of Proposition~\ref{prop:nb_intersections}}\label{app:prop3}

\normalsize

{\bf Proposition~\ref{prop:nb_intersections}.} {\it 
The intersection of two distinct $\ell_1$-\hyps  is either empty or contains  a unique point, two distinct points or an infinite number of points. }

\begin{proof}
\textcolor{black}{We can assume, without loss of generality, that the \hyps are given by two distinct pairs of points. Let us denote by $c_i = (x_i,y_i), i \in \{ 1, 2, 3, 4\}$ these points and their coordinates. Still without loss of generality, let $H(c_1, c_2)$ be of type $V^-$. There are four basic cases to distinguish (see Figs~\ref{fig:hyps_overview2}--\ref{fig:hyps_overview4} for illustrations):
\begin{enumerate}
    \item $H(c_1, c_2)$ is of type $V^-$ and $H(c_3, c_4)$ of type $H^+$ (see Figure~\ref{fig:hyps_overview2}): \\
    In this case, the \hyps intersect in a unique point as the half-lines (resp. the middle segments) of $H(c_1, c_2)$ and $H(c_3, c_4)$ are of opposite orientations. 
    \item $H(c_1, c_2)$ is of type $V^-$ and $H(c_3, c_4)$ of type $H^-$ (see Figure~\ref{fig:hyps_overview3}): \\
    As in the previous case, there will be at least one intersection as a horizontal hypersurface and a vertical \hyp always intersect. Contrary to the above, the middle segments of both \hyps have the same orientation, so they can overlap: in such a case, the intersection contains this overlapping segment, thus an infinite number of points.  
    \item $H(c_1, c_2)$ is of type $V^-$ and $H(c_3, c_4)$ of type $V^+$ (see Figure~\ref{fig:hyps_overview5}): \\
    In this case, the \hyps may not intersect: let us assume that $\max\{ x_1, x_2\}\!<\!\min\{ x_3, x_4\}$. By definition, we have $x\!\in\![ \min\{ x_i, x_j\}, \max\{ x_i, x_j\}]$ for each $(x,y)\!\in\!H(c_i, c_j)$. The above inequality then implies that the intersection of $H(c_1, c_2)$ and $H(c_3, c_4)$ is empty (graphically, $H(c_1, c_2)$ will be ``on the left'' of $H(c_3, c_4)$ - see the first case of Figure~\ref{fig:hyps_overview5}). \\
    The \hyps may also intersect in a unique point: a middle segment of one of the \hyps can intersect one of the half-lines of the second hypersurface, or its middle segment, as the middle segments are not of the same type (see the second case of Figure~\ref{fig:hyps_overview5}). \\
    Finally, as the half-lines of both \hyps are of the same type, one of the half-lines of $H(c_1, c_2)$ may (partially) overlap one of the half-lines of $H(c_3,c_4)$ (see the third case of Figure~\ref{fig:hyps_overview5}). In this case, the intersection will contain an infinity of points.
     \item Both \hyps $H(c_1, c_2)$ and $H(c_3, c_4)$ are of type $V^-$ (see Figure~\ref{fig:hyps_overview4}): \\
     This is the most complex case. For the same reason as above, the \hyps may not intersect. They may also intersect in a unique point if the middle segment of one \hyp intersects one of the half-lines of the second one (see the first case of the Figure~\ref{fig:hyps_overview4}). As the types of half-lines and middle segments are both the same for $H(c_1, c_2)$ and $H(c_3,c_4)$, they can also intersect in two distinct points if the middle segment of $H(c_1, c_2)$ intersects one of the half-lines of $H(c_3, c_4)$ and the middle segment of $H(c_3, c_4)$ intersects one of the half-lines of $H(c_1, c_2)$ (see the second case of Figure~\ref{fig:hyps_overview4}). Finally, the intersection can contain an infinity of points: as the half-lines are of the same type, a half-line of $H(c_1, c_2)$  may (partially) overlap a half-line of $H(c_3,c_4)$. In addition, the middle segments being also of the same type, they can (partially) overlap. See cases 3 and 4 of Figure~\ref{fig:hyps_overview4} (the case when both the half-lines and the middle segments overlap is not presented in Figure~\ref{fig:hyps_overview4}, but it is obviously possible). 
\end{enumerate}}

\end{proof}
\begin{figure}[htb]
    \centering
    \begin{tikzpicture}[scale=0.515]
   \begin{axis}
   [axis x line=bottom,axis y line = left, 
   grid = major,
   axis equal image,
   ytick = {1,2,3,4,5,6},
   xtick = {1,2,3,4,5,6},
   xmin=0,
   xmax=6,
   ymin=0,
   ymax=7,
   nodes near coords,
   point meta=explicit symbolic]
   \addplot+[only marks] coordinates{(0,1)[\Large$c_1$] (6,3)[\Large$c_2$]};
   \addplot+[only marks] coordinates{(1,6)[\Large$c_3$] (2,2)[\Large$c_4$]};
  
    \addplot+[mark = none, blue, thick] coordinates{(4,0) (4,1) (2,3) (2,7)};
    \addplot+[mark = none, red, thick] coordinates{(0,3.5) (1,3.5) (2,4.5) (6,4.5)};
    \end{axis}
    \end{tikzpicture}
    \caption{Intersection of two hypersurfaces: $H(c_1, c_2)$ is of type $V^-$ and $H(c_3, c_4)$ of type $H^+$.}
    \label{fig:hyps_overview2}
\end{figure}
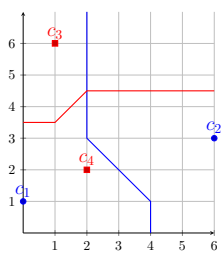
\begin{figure}[htb]
    \centering
    \begin{tikzpicture}[scale=0.515]
   \begin{axis}
   [axis x line=bottom,axis y line = left, 
   grid = major,
   axis equal image,
   ytick = {1,2,3,4,5,6},
   xtick = {1,2,3,4,5,6},
   xmin=0,
   xmax=6,
   ymin=0,
   ymax=7,
   nodes near coords,
   point meta=explicit symbolic]
   \addplot+[only marks] coordinates{(0,1)[\Large$c_1$] (6,3)[\Large$c_2$]};
   \addplot+[only marks] coordinates{(1,0.5)[\Large$c_3$] (3,5.5)[\Large$c_4$]};
  
    \addplot+[mark = none, blue, thick] coordinates{(4,0) (4,1) (2,3) (2,7)};
    \addplot+[mark = none, red, thick] coordinates{(0,4) (1,4) (3,2) (6,2)};
    \end{axis}
    \end{tikzpicture}
    \begin{tikzpicture}[scale=0.515]
   \begin{axis}
   [axis x line=bottom,axis y line = left, 
   grid = major,
   axis equal image,
   ytick = {1,2,3,4,5,6},
   xtick = {1,2,3,4,5,6},
   xmin=0,
   xmax=6,
   ymin=0,
   ymax=7,
   nodes near coords,
   point meta=explicit symbolic]
   \addplot+[only marks] coordinates{(0,1)[\Large$c_1$] (6,3)[\Large$c_2$]};
   \addplot+[only marks] coordinates{(1,2.5)[\Large$c_3$] (3,5.5)[\Large$c_4$]};
  
    \addplot+[mark = none, blue, thick] coordinates{(4,0) (4,1) (2,3) (2,7)};
    \addplot+[mark = none, red, thick] coordinates{(0,5) (1,5) (3,3) (6,3)};
    \end{axis}
    \end{tikzpicture}
    \caption{Intersection of two hypersurfaces: $H(c_1, c_2)$ is of type $V^-$ and $H(c_3, c_4)$ of type $H^-$.}
    \label{fig:hyps_overview3}
\end{figure}
\begin{figure}[htb]
    \centering
    \begin{tikzpicture}[scale=0.515]
   \begin{axis}
   [axis x line=bottom,axis y line = left, 
   grid = major,
   axis equal image,
   ytick = {1,2,3,4,5,6},
   xtick = {1,2,3,4,5,6},
   xmin=0,
   xmax=6,
   ymin=0,
   ymax=7,
   nodes near coords,
   point meta=explicit symbolic]
   \addplot+[only marks] coordinates{(0,1)[\Large$c_1$] (2,2)[\Large$c_2$]};
   \addplot+[only marks] coordinates{(2.5,6)[\Large$c_3$] (6,4)[\Large$c_4$]};
  
    \addplot+[mark = none, blue, thick] coordinates{(1.5,0) (1.5,1) (0.5,2) (0.5,7)};
    \addplot+[mark = none, red, thick] coordinates{(3,0) (3,4) (5,6) (5,7)};
    \end{axis}
    \end{tikzpicture}
    \begin{tikzpicture}[scale=0.515]
   \begin{axis}
   [axis x line=bottom,axis y line = left, 
   grid = major,
   axis equal image,
   ytick = {1,2,3,4,5,6},
   xtick = {1,2,3,4,5,6},
   xmin=0,
   xmax=6,
   ymin=0,
   ymax=7,
   nodes near coords,
   point meta=explicit symbolic]
   \addplot+[only marks] coordinates{(0,1)[\Large$c_1$] (6,3)[\Large$c_2$]};
   \addplot+[only marks] coordinates{(1,5)[\Large$c_3$] (3,4)[\Large$c_4$]};
  
    \addplot+[mark = none, blue, thick] coordinates{(4,0) (4,1) (2,3) (2,7)};
    \addplot+[mark = none, red, thick] coordinates{(1.5,0) (1.5,4) (2.5,5) (2.5,7)};
    \end{axis}
    \end{tikzpicture}
    \begin{tikzpicture}[scale=0.515]
   \begin{axis}
   [axis x line=bottom,axis y line = left, 
   grid = major,
   axis equal image,
   ytick = {1,2,3,4,5,6},
   xtick = {1,2,3,4,5,6},
   xmin=0,
   xmax=6,
   ymin=0,
   ymax=7,
   nodes near coords,
   point meta=explicit symbolic]
   \addplot+[only marks] coordinates{(0,1)[\Large$c_1$] (6,3)[\Large$c_2$]};
   \addplot+[only marks] coordinates{(1.5,6)[\Large$c_3$] (4.5,4)[\Large$c_4$]};
  
    \addplot+[mark = none, blue, thick] coordinates{(4,0) (4,1) (2,3) (2,7)};
    \addplot+[mark = none, red, thick] coordinates{(2,0) (2,4) (4,6) (4,7)};
    \end{axis}
    \end{tikzpicture}
    \caption{Intersection of two hypersurfaces: $H(c_1, c_2)$ is of type $V^-$ and $H(c_3, c_4)$ of type $V^+$.}
    \label{fig:hyps_overview5}
\end{figure}
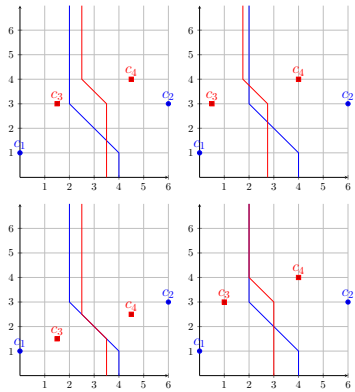
\begin{figure}[htb]
    \centering
    \begin{tikzpicture}[scale=0.4]
   \begin{axis}
   [axis x line=bottom,axis y line = left, 
   grid = major,
   axis equal image,
   ytick = {1,2,3,4,5,6},
   xtick = {1,2,3,4,5,6},
   xmin=0,
   xmax=6,
   ymin=0,
   ymax=7,
   nodes near coords,
   point meta=explicit symbolic]
   \addplot+[only marks] coordinates{(0,1)[\Large$c_1$] (6,3)[\Large$c_2$]};
   \addplot+[only marks] coordinates{(1.5,3)[\Large$c_3$] (4.5,4)[\Large$c_4$]};
  
    \addplot+[mark = none, blue, thick] coordinates{(4,0) (4,1) (2,3) (2,7)};
    \addplot+[mark = none, red, thick] coordinates{(3.5,0) (3.5,3) (2.5,4) (2.5,7)};
    \end{axis}
    \end{tikzpicture}
    \begin{tikzpicture}[scale=0.4]
   \begin{axis}
   [axis x line=bottom,axis y line = left, 
   grid = major,
   axis equal image,
   ytick = {1,2,3,4,5,6},
   xtick = {1,2,3,4,5,6},
   xmin=0,
   xmax=6,
   ymin=0,
   ymax=7,
   nodes near coords,
   point meta=explicit symbolic]
   \addplot+[only marks] coordinates{(0,1)[\Large$c_1$] (6,3)[\Large$c_2$]};
   \addplot+[only marks] coordinates{(0.5,3)[\Large$c_3$] (4,4)[\Large$c_4$]};
  
    \addplot+[mark = none, blue, thick] coordinates{(4,0) (4,1) (2,3) (2,7)};
    \addplot+[mark = none, red, thick] coordinates{(2.75,0) (2.75,3) (1.75,4) (1.75,7)};
    \end{axis}
    \end{tikzpicture}

    \begin{tikzpicture}[scale=0.4]
   \begin{axis}
   [axis x line=bottom,axis y line = left, 
   grid = major,
   axis equal image,
   ytick = {1,2,3,4,5,6},
   xtick = {1,2,3,4,5,6},
   xmin=0,
   xmax=6,
   ymin=0,
   ymax=7,
   nodes near coords,
   point meta=explicit symbolic]
   \addplot+[only marks] coordinates{(0,1)[\Large$c_1$] (6,3)[\Large$c_2$]};
   \addplot+[only marks] coordinates{(1.5,1.5)[\Large$c_3$] (4.5,2.5)[\Large$c_4$]};
  
    \addplot+[mark = none, blue, thick] coordinates{(4,0) (4,1) (2,3) (2,7)};
    \addplot+[mark = none, red, thick] coordinates{(3.5,0) (3.5,1.5) (2.5,2.5) (2.5,7)};
    \end{axis}
    \end{tikzpicture}
        \begin{tikzpicture}[scale=0.4]
   \begin{axis}
   [axis x line=bottom,axis y line = left, 
   grid = major,
   axis equal image,
   ytick = {1,2,3,4,5,6},
   xtick = {1,2,3,4,5,6},
   xmin=0,
   xmax=6,
   ymin=0,
   ymax=7,
   nodes near coords,
   point meta=explicit symbolic]
   \addplot+[only marks] coordinates{(0,1)[\Large$c_1$] (6,3)[\Large$c_2$]};
   \addplot+[only marks] coordinates{(1,3)[\Large$c_3$] (4,4)[\Large$c_4$]};
    \addplot+[mark = none, blue, thick] coordinates{(4,0) (4,1) (2,3) (2,7)};
    \addplot+[mark = none, red, thick] coordinates{(2,7) (2,4) (3,3) (3,0)};
    \end{axis}
    \end{tikzpicture}
    \caption{Intersection of two hypersurfaces: both \hyps $H(c_1, c_2)$ and $H(c_3, c_4)$ are of type $V^-$.}
    \label{fig:hyps_overview4}
\end{figure}

\subsection{Proof of Proposition~\ref{prop:triangle_intersection}}\label{app:prop4}

{\bf Proposition \ref{prop:triangle_intersection}.}
{\it 
Given three points $c_1$, $c_2$ and $c_3$:
\begin{itemize}
    \item If $H(c_1,c_2)$, $H(c_1, c_3)$ and $H(c_2, c_3)$ are all vertical (or all horizontal), then the intersection of each pair of \hyps is empty. In particular, the intersection of the 3 \hyps is empty.
    \item If two of them are vertical and one is horizontal (or vice-versa), then the intersection of the 3 \hyps is a unique point.
\end{itemize}  
}

\begin{proof}
Assume first that the three \hyps are vertical. Let $(x_1, y_1), (x_2, y_2)$ and $(x_3, y_3)$ denote the positions in the plane of $c_1, c_2$ and $c_3$. Without loss of generality, we assume that $x_1 < x_2 < x_3$ and that $H(c_1, c_3)$ is of type $V^-$. \\
\noindent
Given a vertical \hyp $H(c_i, c_j)$, for each point $(x,y)\!\in\!H(c_i, c_j)$ we have $x_i\!<\!x\!<\! x_j$, because $x_i\!<\!x_{M_1}\!<\!x_j$ and $x_i\!<\!x_{M_2}\!<\!x_j$ for the extremities $M_1, M_2$ of the middle segment of the \hyp (see Figure~\ref{fig:hyperplan_l1_base}, page~\pageref{fig:hyperplan_l1_base}). Thus, $H(c_1, c_2)$ and $H(c_2, c_3)$ do not intersect, as we have $x_1\!<\!x_2\!<\!x_3$. Using Lemma \ref{lemma:inter} (page~\pageref{lemma:inter}), we conclude that $H(c_1,c_2)\cap H(c_1, c_3)\cap H(c_2, c_3) = \emptyset$.\\ 

Let us now assume that two \hyps are vertical (resp. horizontal) and the third one is horizontal (resp. vertical).  \textcolor{black}{Without loss of generality, we can assume that $H(c_1, c_2)$ is horizontal and both remaining \hyps $H(c_1, c_3)$ and $H(c_2, c_3)$ are vertical. Any vertical \hyp intersects any horizontal \hyp in a unique point (by assuming w.l.o.g. that the representation is non-degenerate, see Lemma~\ref{lemma:noinfiniteinter}). In particular, $\vert H(c_1, c_2) \cap H(c_1, c_3)\vert\!=\!1$. Lemma~\ref{lemma:inter}  states that $H(c_1, c_2) \cap H(c_1, c_3) \cap H(c_2, c_3) = H(c_i, c_j) \cap H(c_i, c_k)$ for $\{ i,j,k \}\!=\!\{ 1,2,3\}$. We have therefore the three \hyps intersecting in a unique point. }
\end{proof}

\subsection{Proof of Corollary~\ref{cor:int}}\label{app:cor1}

{\bf Corollary~\ref{cor:int}.} {\it 
Given three points $c_i$, $c_j$, $c_k$, the \hyps $H(c_i, c_j)$ and $H(c_i, c_k)$ intersect in at most one point. In other words, if two hypersurfaces $H(c_i, c_j)$ and $H(c_k, c_l)$ intersect in two different points, then $c_i, c_j, c_k$ and $c_l$ are all distinct.}

\begin{proof}
By contradiction, assume that $H(c_i, c_j)$ and $H(c_i, c_k)$ intersect in two distinct points. According to Lemma \ref{lemma:inter} (page~\pageref{lemma:inter}), $\vert H(c_i, c_j) \cap H(c_i, c_k) \cap H(c_j, c_k) \vert \geq 2$. We get a contradiction with Proposition \ref{prop:triangle_intersection} which states that if the three \hyps intersect, then the point of intersection is unique.  
\end{proof}

\subsection{Proof of Proposition~\ref{prop:diag_rect}}\label{app:prop5}
{\bf Proposition \ref{prop:diag_rect}}
{\it 
Given three points $c_1 = (x_1, y_1)$, $c_2 = (x_2, y_2)$ and $c_3 = (x_3, y_3)$:
\begin{itemize}
    \item If one of these 3 points is inside the parallelogram determined by the two other points, then the three \hyps $H(c_1,c_2)$, $H(c_1, c_3)$ and $H(c_2, c_3)$ do not (pairwise) intersect. 
    \item Otherwise, the intersection of the three \hyps is a unique point. 
\end{itemize}  
}

\begin{proof}
This proposition is a direct consequence of Proposition \ref{prop:triangle_intersection}. 
\textcolor{black}{To prove the first point, we assume without loss of generality that $c_2$ lies inside the parallelogram determined by $c_1$ and $c_3$, and we prove that in this case, the three \hyps are all vertical or horizontal - Proposition \ref{prop:triangle_intersection} then implies that they do not (pairwise) intersect. \\ Up to exchanging the roles of $c_1$ and $c_3$, we can assume, still without loss of generality, that $x_1 < x_3$. There are then 4 cases to distinguish (see Figure~\ref{fig:paral_prop48}):}
\begin{itemize}
{
    \item[(a) ] $H(c_1, c_3)$ is of type $V^-$ (see Figure~\ref{subfig:paral_prop48_a}). \\ In this case, we have $y_1 < y_3$ (see the classification of \hyps given in Figure~\ref{fig:hyps_overview}). Moreover, we have $x_1 < x_2 < x_3$. In the parallelogram given in Figure~\ref{subfig:paral_prop48_a}, $c_2$ lies then above the diagonal $d_1^-$ (i.e,  $y_2 > -x_2 + x_1 + y_1$) and below the diagonal $d_1^+$ (i.e, $y_2 < x_2 - x_1 + y_1$). Put together, we get 
    $$ -x_2 + x_1 < y_2 - y_1 < x_2 - x_1.$$
    In other words, $|y_1 - y_2| < x_2 - x_1$, hence $H(c_1, c_2)$ is vertical. We show similarly that $H(c_2, c_3)$ is vertical, as $c_2$ lies above the diagonal $d_3^+$ and below the diagonal $d_3^-$. All three \hyps being vertical, they do not (pairwise) intersect. 
    \item[(b)] Let us now suppose that $H(c_1, c_3)$ is of type $H^-$ (see Figure~\ref{subfig:paral_prop48_b}). \\ We have $y_1 < y_2 < y_3$. As $c_2$ lies above diagonals $d_1^-$ and $d_1^+$, we have $y_2 > - x_2 + x_1 + y_1$ and $y_2 > x_2 - x_1 + y_1$. Put together, we have $y_2 - y_1 > x_1 - x_2$ and $y_2 - y_1 > -(x_1 - x_2)$ - in other words, $y_2 - y_1 > | x_1 - x_2 |$. Therefore, the \hyp $H(c_1, c_2)$ is horizontal. We show similarly that $H(c_2, c_3)$ is horizontal, as $c_2$ lies below diagonals $d_3^+$ and $d_3^-$, so we obtain $y_3 - y_2 > | x_2 - x_3|$. 
    \item[(c)] We suppose here that $H(c_1, c_3)$ is of type $V^+$. We have $y_1 > y_3$  and $x_1 < x_2 < x_3$. Analogously to the previous case, we show that $x_2 - x_1 > | y_2 - y_1|$, so $H(c_1, c_2)$ is vertical, and that $x_3 - x_2 > | y_2 - y_3 |$, which implies that $H(c_2, c_3) $ is also vertical. 
    \item[(d)] Finally, we consider $H(c_1, c_3)$ of type $H^+$. We have $y_1 > y_2 >y_3$. As in previous cases, we show that $H(c_1, c_2)$ is horizontal as $c_2$ lies below diagonals $d_1^+$ and $d_1^-$, and $H(c_2, c_3)$ is also horizontal as $c_2$ lies above diagonals $d_3^+$ and $d_3^-$. 
    }
\end{itemize}
    \textcolor{black}{To prove the second point of the proposition, we suppose that any point does not lie in the parallelogram determined by the remaining two points, and we will show that in such a case, there is at least one horizontal and  one vertical hypersurface. As a vertical \hyp and a horizontal \hyp intersect in a unique point, Lemma~\ref{lemma:inter} allows us to conclude that the three \hyps intersect in a unique point. \\
    Suppose first that $H(c_1, c_3)$ is vertical (see Figure~\ref{subfig:paral_prop48_2_a}). The diagonals $d_1^+, d_1^-, d_3^+$ and $d_3^-$ divide the plane into 9 areas $A_1, A_2, \hdots, A_9$. The point $c_2$ does not lie in $A_1$ (resp. $A_5$, $A_9$) because $c_1$ (resp. $c_2$, $c_3$) does not lie in the parallelogram determined by the remaining two points. If $c_2$ lies in area $A_2$, $A_3$, $A_4$ or $A_7$, the \hyp $H(c_1, c_2)$ is horizontal, so it intersects the vertical \hyp $H(c_1, c_3)$. If $c_2$ lies in one of the remaining areas $A_6$ or $A_8$, the \hyp $H(c_2, c_3)$ is horizontal, so it intersects $H(c_1, c_3)$. We note that whether the oblique middle-segment of the \hyp $H(c_1, c_3)$ is ascending (case (c) of Figure~\ref{fig:paral_prop48}) or descending (case (a)) 
    has no impact on this reasoning  and it can therefore be used without change for both cases (a) and (c) of Figure~\ref{fig:paral_prop48}. \\ Analogously, we treat the case in which $H(c_1, c_3)$ is horizontal: $c_2$ cannot lie in areas $A_3$, $A_5$ and $A_7$ as any point does not lie within the parallelogram determined by the remaining two points. If $c_2$ lies in $A_1$, $A_2$, $A_6$ or $A_9$, the \hyp $H(c_1, c_2)$ is vertical. If it lies in one of the two remaining areas $A_4$ or $A_8$, the \hyp $H(c_2, c_3)$ is vertical. To sum up, there is always at least one horizontal and one vertical hypersurface. } 
  
\end{proof}
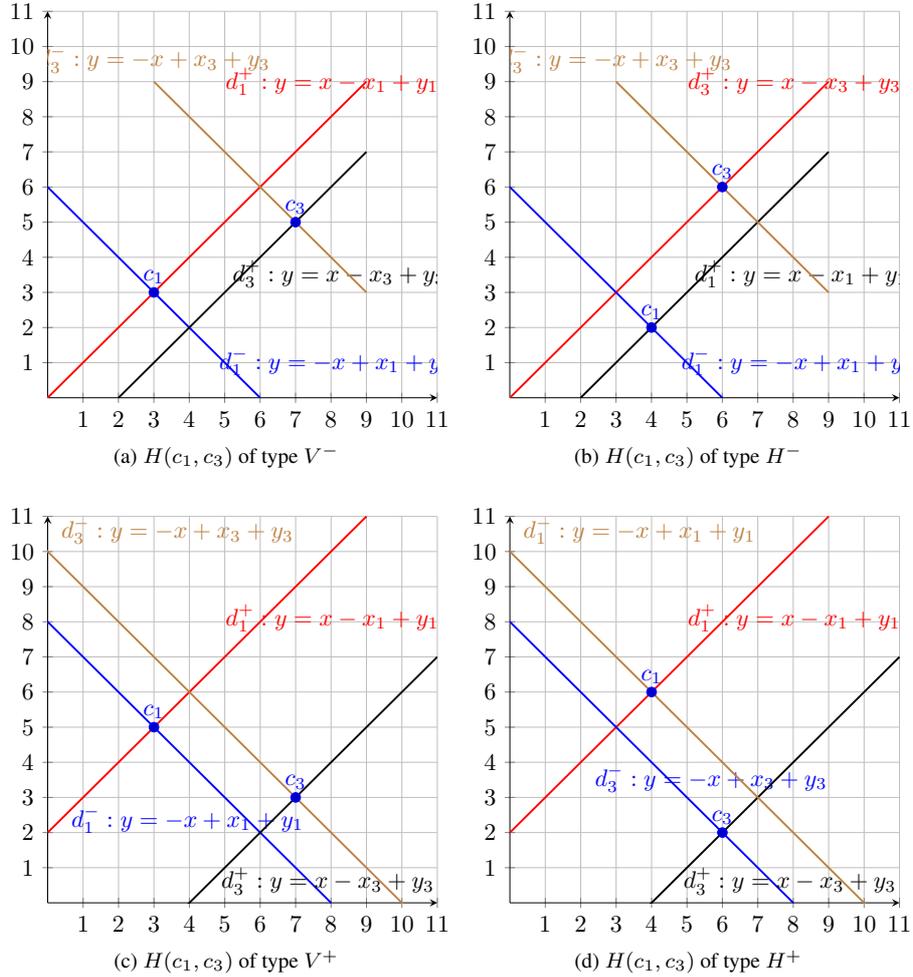
\begin{figure}[htb]
    \centering
    \subfloat[$H(c_1,c_3)$ of type $V^-$]{
    \begin{tikzpicture}[scale=0.9]
   \begin{axis}
   [axis x line=bottom,axis y line = left, 
   grid = major,
   axis equal image,
   ytick = {1,2,3,4,5,6,7,8,9,10,11},
   xtick = {1,2,3,4,5,6,7,8,9,10,11},
   xmin=0,
   xmax=11,
   ymin=0,
   ymax=11,
   nodes near coords,
   point meta=explicit symbolic]
   \addplot+[only marks] coordinates{(3,3)[$c_1$] (7,5)[$c_3$]};
    \addplot+[mark = none, red, thick] coordinates{(0,0) (9,9)} node[xshift = -0.5cm] {$d^+_1: y = x - x_1 + y_1$};
    \addplot+[mark = none, blue, thick] coordinates{(0,6) (6,0)} node[xshift = 1.1cm, yshift = 0.2cm, above,pos=1] {$d^-_1: y = - x + x_1 + y_1$};
    \addplot+[mark = none, thick] coordinates{(2,0) (9,7)} node[xshift = -0.4cm, yshift = -1.8cm] {$d^+_3: y = x - x_3 + y_3$};
     \addplot+[mark = none, color = brown, thick] coordinates{(9,3) (3,9)} node[above,pos=1] {$d^-_3: y = - x + x_3 + y_3$};
    \end{axis}
    \end{tikzpicture}\label{subfig:paral_prop48_a}}
    \subfloat[$H(c_1, c_3)$ of type $H^-$]{
     \begin{tikzpicture}[scale=0.9]
   \begin{axis}
   [axis x line=bottom,axis y line = left, 
   grid = major,
   axis equal image,
   ytick = {1,2,3,4,5,6,7,8,9,10,11},
   xtick = {1,2,3,4,5,6,7,8,9,10,11},
   xmin=0,
   xmax=11,
   ymin=0,
   ymax=11,
   nodes near coords,
   point meta=explicit symbolic]
   \addplot+[only marks] coordinates{(4,2)[$c_1$] (6,6)[$c_3$]};
    \addplot+[mark = none, red, thick] coordinates{(0,0) (9,9)} node[xshift = -0.5cm] {$d^+_3: y = x - x_3 + y_3$};
    \addplot+[mark = none, blue, thick] coordinates{(0,6) (6,0)} node[xshift = 1.1cm, yshift = 0.2cm, above,pos=1] {$d^-_1: y = - x + x_1 + y_1$};
    \addplot+[mark = none, thick] coordinates{(2,0) (9,7)} node[xshift = -0.4cm, yshift = -1.8cm] {$d^+_1: y = x - x_1 + y_1$};
     \addplot+[mark = none, color = brown, thick] coordinates{(9,3) (3,9)} node[above,pos=1] {$d^-_3: y = - x + x_3 + y_3$};
    \end{axis}
    \end{tikzpicture}\label{subfig:paral_prop48_b}} \\
    \subfloat[$H(c_1, c_3)$ of type $V^+$]{
    \begin{tikzpicture}[scale=0.9]
   \begin{axis}
   [axis x line=bottom,axis y line = left, 
   grid = major,
   axis equal image,
   ytick = {1,2,3,4,5,6,7,8,9,10,11},
   xtick = {1,2,3,4,5,6,7,8,9,10,11},
   xmin=0,
   xmax=11,
   ymin=0,
   ymax=11,
   nodes near coords,
   point meta=explicit symbolic]
   \addplot+[only marks] coordinates{(3,5)[$c_1$] (7,3)[$c_3$]};
    \addplot+[mark = none, red, thick] coordinates{(0,2) (9,11)} node[xshift = -0.5cm, yshift = -1.5cm] {$d^+_1: y = x - x_1 + y_1$};
    \addplot+[mark = none, blue, thick] coordinates{(0,8) (8,0)} node[xshift = -2.1cm, yshift = 0.9cm, above,pos=1] {$d^-_1: y = - x + x_1 + y_1$};
    \addplot+[mark = none, thick] coordinates{(2,-2) (11,7)} node[xshift = -1.6cm, yshift = -3.3cm] {$d^+_3: y = x - x_3 + y_3$};
     \addplot+[mark = none, color = brown, thick] coordinates{(-1,11)(11,-1) } node[xshift = -3.8cm, yshift = 6cm] {$d^-_3: y = - x + x_3 + y_3$};
    \end{axis}
    \end{tikzpicture}\label{subfig:paral_prop48_c}}
    \subfloat[$H(c_1, c_3)$ of type $H^+$]{
     \begin{tikzpicture}[scale=0.9]
   \begin{axis}
   [axis x line=bottom,axis y line = left, 
   grid = major,
   axis equal image,
   ytick = {1,2,3,4,5,6,7,8,9,10,11},
   xtick = {1,2,3,4,5,6,7,8,9,10,11},
   xmin=0,
   xmax=11,
   ymin=0,
   ymax=11,
   nodes near coords,
   point meta=explicit symbolic]
   \addplot+[only marks] coordinates{(4,6)[$c_1$] (6,2)[$c_3$]};
    \addplot+[mark = none, red, thick] coordinates{(0,2) (9,11)} node[xshift = -0.5cm, yshift = -1.5cm] {$d^+_1: y = x - x_1 + y_1$};
    \addplot+[mark = none, blue, thick] coordinates{(0,8) (8,0)} node[xshift = -1.2cm, yshift = 1.5cm, above,pos=1] {$d^-_3: y = - x + x_3 + y_3$};
    \addplot+[mark = none, thick] coordinates{(2,-2) (11,7)} node[xshift = -1.6cm, yshift = -3.3cm] {$d^+_3: y = x - x_3 + y_3$};
     \addplot+[mark = none, color = brown, thick] coordinates{(-1,11)(11,-1) } node[xshift = -3.8cm, yshift = 6cm] {$d^-_1: y = - x + x_1 + y_1$};
    \end{axis}
    \end{tikzpicture}\label{subfig:paral_prop48_d}}
    
    \caption{The parallelogram determined by $c_1$ and $c_3$ with $x_1 < x_3$: 4 cases to distinguish. 
    }
    \label{fig:paral_prop48}
\end{figure}
\begin{figure}[htb]
     \subfloat[$H(c_1,c_3)$ is vertical]{
    \begin{tikzpicture}[scale=0.9]
   \begin{axis}
   [axis x line=bottom,axis y line = left, 
   grid = major,
   axis equal image,
   ytick = {1,2,3,4,5,6,7,8,9,10,11},
   xtick = {1,2,3,4,5,6,7,8,9,10,11},
   xmin=0,
   xmax=11,
   ymin=0,
   ymax=11,
   nodes near coords,
   point meta=explicit symbolic]
   \addplot+[only marks] coordinates{(3,3)[$c_1$] (7,5)[$c_3$]};
    \addplot+[mark = none, red, thick] coordinates{(0,0) (11,11)} node[xshift = -1.2cm,yshift = -0.8cm] {$d^+_1: y = x - x_1 + y_1$};
    \addplot+[mark = none, blue, thick] coordinates{(0,6) (6,0)} node[xshift = 1.1cm, yshift = 0.2cm, above,pos=1] {$d^-_1: y = - x + x_1 + y_1$};
    \addplot+[mark = none, thick] coordinates{(2,0) (11,9)} node[xshift = -1.5cm, yshift = -3cm] {$d^+_3: y = x - x_3 + y_3$};
     \addplot+[mark = none, color = brown, thick] coordinates{(12,0) (0,12)} node[above,pos=1,xshift = 2cm, yshift = -1cm] {$d^-_3: y = - x + x_3 + y_3$};
     
     \node[] at (axis cs:1,3) {$A_{1}$};
     \node[] at (axis cs:2.5,1.5) {$A_{2}$};
     \node[] at (axis cs:4,0.5) {$A_{3}$};
     \node[] at (axis cs:2.5,6.5) {$A_{4}$};
     \node[] at (axis cs:5,4) {$A_{5}$};
     \node[] at (axis cs:8,2) {$A_{6}$};
     \node[] at (axis cs:6,8) {$A_{7}$};
     \node[] at (axis cs:8,7) {$A_{8}$};
     \node[] at (axis cs:10,5) {$A_{9}$};
    \end{axis}
    \end{tikzpicture}\label{subfig:paral_prop48_2_a}}
     \subfloat[$H(c_1, c_3)$ is horizontal]{
     \begin{tikzpicture}[scale=0.9]
   \begin{axis}
   [axis x line=bottom,axis y line = left, 
   grid = major,
   axis equal image,
   ytick = {1,2,3,4,5,6,7,8,9,10,11},
   xtick = {1,2,3,4,5,6,7,8,9,10,11},
   xmin=0,
   xmax=11,
   ymin=0,
   ymax=11,
   nodes near coords,
   point meta=explicit symbolic]
   \addplot+[only marks] coordinates{(4,2)[$c_1$] (6,6)[$c_3$]};
    \addplot+[mark = none, red, thick] coordinates{(0,0) (11,11)} node[xshift = -1.2cm,yshift = -0.8cm] {$d^+_1: y = x - x_3 + y_3$};
    \addplot+[mark = none, blue, thick] coordinates{(0,6) (6,0)} node[xshift = 1.1cm, yshift = 0.2cm, above,pos=1] {$d^-_1: y = - x + x_1 + y_1$};
    \addplot+[mark = none, thick] coordinates{(2,0) (11,9)} node[xshift = -1.5cm, yshift = -3cm] {$d^+_3: y = x - x_1 + y_1$};
     \addplot+[mark = none, color = brown, thick] coordinates{(12,0) (0,12)} node[above,pos=1,xshift = 2cm, yshift = -1cm] {$d^-_3: y = - x + x_3 + y_3$};
     
     \node[] at (axis cs:1,3) {$A_{1}$};
     \node[] at (axis cs:2.5,1.5) {$A_{2}$};
     \node[] at (axis cs:4,0.5) {$A_{3}$};
     \node[] at (axis cs:2.5,6.5) {$A_{4}$};
     \node[] at (axis cs:5,4) {$A_{5}$};
     \node[] at (axis cs:8,2) {$A_{6}$};
     \node[] at (axis cs:6,8) {$A_{7}$};
     \node[] at (axis cs:8,7) {$A_{8}$};
     \node[] at (axis cs:10,5) {$A_{9}$};
    \end{axis}
    \end{tikzpicture}\label{subfig:paral_prop48_2_b}}
    \centering
    \caption{\textcolor{black}{The parallelogram determined by $c_1$ and $c_3$ with $x_1\!<\!x_3$, and the possible placements of $c_2$.} 
    }
    \label{fig:paral_prop48_2}
\end{figure}
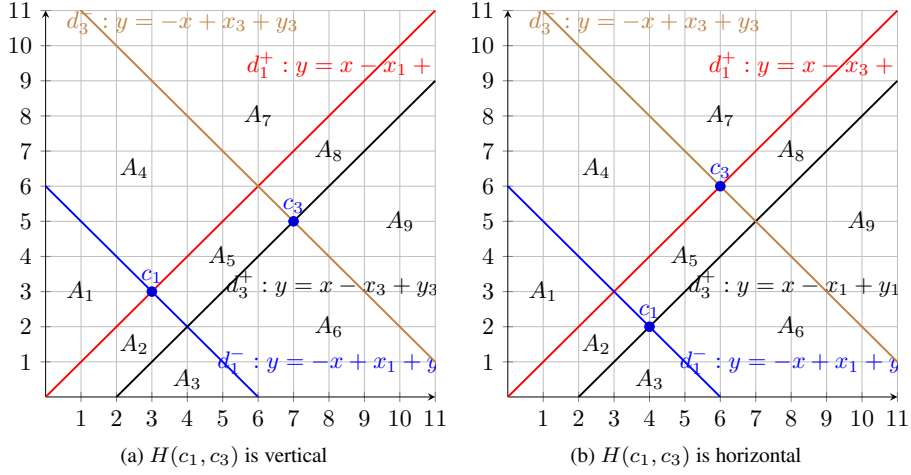

\subsection{Proof of Proposition~\ref{prop:nb_intersections_l1_c4}}\label{app:prop6}

{\bf Proposition \ref{prop:nb_intersections_l1_c4}.}
{\it Given four points $c_1, c_2, c_3$ and $c_4$, there is at most one pair of \hyps $H(c_i, c_j), H(c_k, c_l)$ (with $\{ i,j,k,l\} = \{ 1,2,3,4 \}$) intersecting in two distinct points.}

\begin{proof}
\textcolor{black}{For $i\!\in\!\{ 1,2,3,4\}$, we denote by $(x_i, y_i)$ the position of candidate $c_i$ in the plane}. Assume there are two pairs of \hyps intersecting in two distinct points. \textcolor{black}{Thanks to Corollary \ref{cor:int}, we can assume, w.l.o.g.,} that the first pair involves the \hyps $H(c_1, c_2)$ and $H(c_3, c_4)$. \textcolor{black}{Moreover, still w.l.o.g, we can assume that they} are both of type $V^-$, and that $H(c_1,c_2)$ is ``on the left'' of $H(c_3,c_4)$ (as in Figure~\ref{fig:inthyps}), and that $x_1 < x_2$ and $x_3 < x_4$. According \textcolor{black}{to the classification of \hyps (see Figure~\ref{fig:hyps_overview}, page~\pageref{fig:hyps_overview})}, as $H(c_1, c_2)$ and $H(c_3, c_4)$ are of type $V^-$, we have $y_1 < y_2$ and $y_3 < y_4$.  Note that we necessarily have: 
\begin{equation}\label{eq:b}
    \{x_1,x_3\}<\{x_2,x_4\}
\end{equation}
and 
\begin{equation}\label{eq:a}
    y_2>\{y_1,y_4\}>y_3.
\end{equation}
Equation~(\ref{eq:b}) directly follows from the fact that for each point $(x,y) $ of a \hyp $H(c_i, c_j)$, we have $x \in [x_i,x_j]$\textcolor{black}{: indeed, if $x_3 > x_2$, we would have $x_1 < x_2 < x_3 < x_4$ and the x-coordinate of each point of $H(c_1, c_2)$ would be smaller than the x-coordinate of each point of $H(c_3, c_4)$. In other words, the \hyps would not intersect. An analogous reasoning can be done to show that $x_1 < x_4$}. Equation~(\ref{eq:a}) follows from the fact that the \hyps do not even intersect if these inequalities are not satisfied. 

Furthermore, Equation~(\ref{eq:a}) means that when two vertical \hyps $H(c_1,c_2)$ and $H(c_3,c_4)$ intersect twice, if the highest point in $\{c_1,\ldots,c_4\}$ belongs to $\{c_1,c_2\}$ (resp. $\{c_3,c_4\}$) then the lowest point belongs to $\{c_3,c_4\}$ (resp. $\{c_1,c_2\}$). 

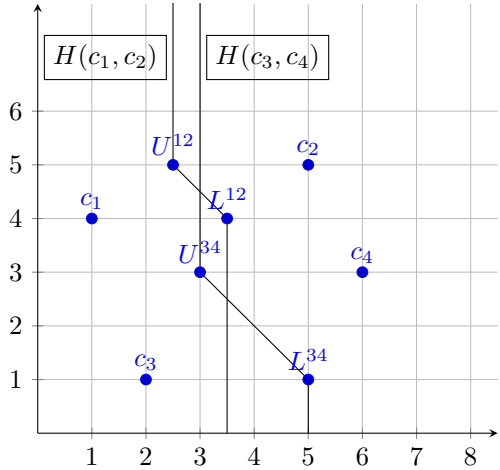
\begin{figure}[htb]
            \centering
            \begin{tikzpicture}[scale=1]
            \begin{axis}
        [axis x line=bottom,axis y line = left, 
        grid = major,
        axis equal image, 
        ytick = {1,2,3,4,5,6},
        xtick = {1,2,3,4,5,6,7,8,9},
        xmin=0,
        xmax=8.5,
        ymin=0,
        ymax=8,
        nodes near coords,
        point meta=explicit symbolic]
        \addplot+[only marks] coordinates{(2,1)[$c_3$] (6,3)[$c_4$] (1,4)[$c_1$] (5,5)[$c_2$] (3,3)[$U^{34}$] (3.5,4)[$L^{12}$] (2.5,5)[$U^{12}$] (5,1)[$L^{34}$]};

        \addplot+[mark = none,black] coordinates{(5,0) (5,1) (3,3) (3,10)};
        \addplot+[mark = none,black] coordinates{(3.5,0) (3.5,4) (2.5,5) (2.5,10)};

         \node[draw] at (axis cs:4.25,7) {$H(c_3, c_4)$};
         \node[draw] at (axis cs:1.25,7) {$H(c_1, c_2)$};
         
        \end{axis}
        \end{tikzpicture}
            \caption{Two intersecting hypersurfaces.}
            \label{fig:inthyps}
        \end{figure}

Assume first that the second pair of \hyps that intersect twice are also vertical. As $y_3=\min\{y_1,y_2,y_3,y_4\}$ and $y_2=\max\{y_1,y_2,y_3,y_4\}$, by the discussion above $H(c_2,c_3)$ and $H(c_1,c_4)$ cannot intersect twice vertically. So the unique possibility is that $H(c_1,c_3)$ and $H(c_2,c_4)$ intersect twice. This is, however, not possible: any point in $H(c_1,c_3)$ has $x$-coordinate in $[x_1,x_3]$, any point in  $H(c_2,c_4)$ has $x$-coordinate in $[x_2,x_4]$, but $[x_1,x_3]\cap [x_2,x_4]=\emptyset$ by Equation~(\ref{eq:b}).\\

Suppose now that the second pair of \hyps that intersect twice are horizontal. This pair can be either $H(c_1,c_4)$ and $H(c_2,c_3)$, or $H(c_1,c_3)$ and $H(c_2,c_4)$. 
\begin{itemize}
    \item Let us first consider the case where it is $H(c_1,c_4)$ and $H(c_2,c_3)$, which is illustrated in Figure~\ref{fig:inthyps2}. Let us look at the preference $p_1$ in the upper left part. We have $c_1>c_2$ and $c_3>c_4$ (by the positions of $H(c_1,c_2)$ and $H(c_3,c_4)$). As $H(c_2,c_3)$ is horizontal, we have $c_2>c_3$ (because $y_2>y_3$ from Equation~(\ref{eq:a}), see also Figure~\ref{fig:inthyps}). Therefore $p_1=(c_1,c_2,c_3,c_4)$. As $H(c_1,c_4)$ and $H(c_2,c_3)$ are horizontal, $H(c_2,c_3)$ is necessarily \emph{above} $H(c_1,c_4)$ on the (infinite) left part of the figure, since starting from $p_1=(c_1,c_2,c_3,c_4)$ and going down we need to cross $H(c_2,c_3)$ before $H(c_1,c_4)$: \textcolor{black}{in fact, going down from the area corresponding to $p_1$, we will not cross nor $H(c_1, c_2)$ neither $H(c_3,c_4)$ as they are vertical. We can only cross the remaining \hyps $H(c_1, c_3), H(c_1,c_4), H(c_2, c_3)$ and $H(c_2, c_4)$. However, the \hyp $H(c_1, c_3)$ (resp. $H(c_1, c_4)$, $H(c_2, c_4)$) cannot be the first \hyp to be crossed, as $c_2$ is ranked between $c_1$ and $c_3$ (resp. $c_2$ and $c_3$ are ranked between $c_1$ and $c_4$, $c_3$ between $c_2$ and $c_4$). Therefore, the first \hyp to be crossed is necessarily $H(c_2, c_3)$ ($c_2$ and $c_3$ are ranked one next to other in $p_1$, so they can be swapped).} 
    
    Similarly, we get $p_2\!=\!(c_4,c_3,c_2,c_1)$ in the lower right part, thus $H(c_2,c_3)$ is \emph{below} $H(c_1,c_4)$ on the (infinite) right part of the figure, since starting from $p_2=(c_4,c_3,c_2,c_1)$ and going up we need to cross $H(c_2,c_3)$ before $H(c_1,c_4)$, \textcolor{black}{using the same reasoning as in the case of $p_1$.}
    
 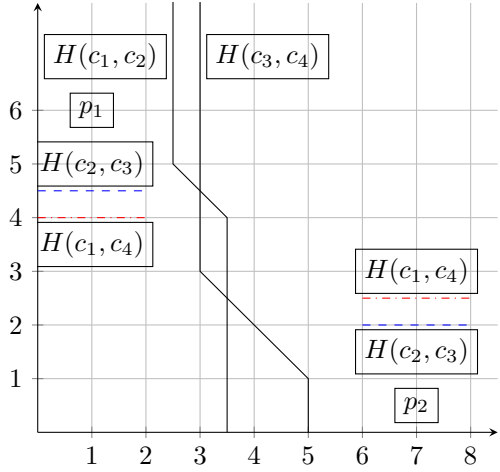
\begin{figure}[htb]
            \centering
            \begin{tikzpicture}[scale=1]
            \begin{axis}
        [axis x line=bottom,axis y line = left, 
        grid = major,
        axis equal image,
        ytick = {1,2,3,4,5,6},
        xtick = {1,2,3,4,5,6,7,8,9},
        xmin=0,
        xmax=8.5,
        ymin=0,
        ymax=8,
        nodes near coords,
        point meta=explicit symbolic]

        \addplot+[mark = none,black] coordinates{(5,0) (5,1) (3,3) (3,10)};
        \addplot+[mark = none,black] coordinates{(3.5,0) (3.5,4) (2.5,5) (2.5,10)};
        
         \addplot+[mark=none,blue,dashed] coordinates{(0,4.5) (2,4.5)};
         
        \addplot+[mark=none,red,dashdotted] coordinates{(0,4) (2,4)};
                         
                          \addplot+[mark=none,blue,dashed] coordinates{(6,2) (8,2)};
                        
           \addplot+[mark=none,red,dashdotted] coordinates{(6,2.5) (8,2.5)};  
         
         \node[draw] at (axis cs:4.25,7) {$H(c_3, c_4)$};
         \node[draw] at (axis cs:1.25,7) {$H(c_1, c_2)$};
                  \node[draw] at (axis cs:1,5) {$H(c_2, c_3)$};
         \node[draw] at (axis cs:1,3.5) {$H(c_1, c_4)$};
          \node[draw] at (axis cs:7,1.5) {$H(c_2, c_3)$};
         \node[draw] at (axis cs:7,3) {$H(c_1, c_4)$};
         
         \node[draw] at (axis cs:1,6) {$p_1$};
         \node[draw] at (axis cs:7,0.5) {$p_2$};
        \end{axis}
        \end{tikzpicture}
            \caption{Relative positions of $H(c_1,c_4)$ and $H(c_2,c_3)$.}
            \label{fig:inthyps2}
        \end{figure}   
    
    Hence, $H(c_2,c_3)$ and $H(c_1,c_4)$ cannot intersect twice (otherwise the same \hyp would be above the other one both on the left part and the right part of the figure).
    
    \item Let us finally focus on the case where  $H(c_1,c_3)$ and $H(c_2,c_4)$ are horizontal and intersect twice. \textcolor{black}{They are necessarily both of type $H^+$ or both of type $H^-$ (otherwise, they cannot intersect in two different points). We will show that none of these two cases is possible - in other words, that we cannot have $H(c_1, c_3)$ and $H(c_2, c_4)$ intersecting twice.}
    \begin{enumerate}
        \item  \textcolor{black}{Firstly, let us assume that $H(c_1,c_3)$ and $H(c_2, c_4)$ are of type $H^-$: \\ We recall that $y_2 > \{ y_1, y_4\} > y_3$ and $\{ x_1, x_3\} < \{ x_2, x_4\}$ (see Equations~\ref{eq:a} and \ref{eq:b}). However, the information on the type of $H(c_1,c_3)$ and $H(c_2, c_4)$ allows us to complete these partial orders on the coordinates of the candidates: using the classification of \hyps (see Figure~\ref{fig:hyps_overview}), we must have $x_3 < x_1$ and $x_4 < x_2$ if the \hyps are of type $H^-$. Put together, we have $x_3 < x_1 < x_4 < x_2$. Moreover, the necessary order on $y$-coordinates is $y_2 > y_1 > y_4 > y_3$ - if the order would be $y_2 > y_4 > y_1 > y_3 $, $H(c_1, c_3)$ and $H(c_2, c_4)$ would not intersect as any point of $H(c_i, c_j)$ has its $y$-coordinate in $[x_i, x_j]$. Let we denote by $U^{ij} = (U^{ij}_x, U^{ij}_y)$ (resp. $L^{ij} = (L^{ij}_x, L^{ij}_y)$) the upper extreme point (resp. the lower extreme point) of the middle-segment of $H(c_i, c_j)$. As $H(c_1, c_2)$ and $H(c_3, c_4)$ are of type $V^-$ and intersect twice, the given orders on both $x$-coordinates and $y$-coordinates of candidates implies that $U^{12}_x < U^{34}_x$ and $L^{12}_x < L^{34}_x$ (see Figure~\ref{fig:inthyps}). When we express the segment extremities positions using the candidates coordinates, these two inequalities rewrite, after simplifying, as follows: 
    \begin{align*}
        x_2 + x_1 + y_1 - y_2 & < x_4 + x_3 + y_3 - y_4 \\
        x_1 + x_2 + y_2 - y_1 & < x_3 + x_4 + y_4 - w_3
    \end{align*}
    If we sum the both inequalities, we get: 
    $$ 2 (x_4 + x_3) > 2 (x_1 + x_2),$$
    in other words, 
    $$ x_4 - x_1 > x_2 - x_3.$$
    But this is in contradiction with the order on $x$-coordinates which states that $x_3 < x_1 < x_4 < x_2$. Therefore, the \hyps $H(c_1, c_3)$ and $H(c_2, c_4)$ cannot be of type $H^-$.}
    \item  \textcolor{black}{Let us now assume that} $H(c_1, c_3)$ and $H(c_2, c_4)$ are of type $H^+$: \\ This case is illustrated in Figure~\ref{fig:doubleinthyps}. Note that the ``upper'' horizontal part of $H(c_1,c_3)$ (starting at the $x$-position $x_3$) is below the line $(c_1,L^{12})$, as $y_3<y_1$. Similarly, the lower horizontal part of $H(c_2,c_4)$  (ending at the $x$-position $x_2$) is above the line $(U^{34},c_4)$ as $y_2>y_4$. If $H(c_1,c_3)$ and $H(c_2,c_4)$ intersect, then $H(c_1,c_3)$ is above $H(c_2,c_4)$ in the central part, see Figure~\ref{fig:doubleinthyps}.\\
     Then in the (non empty) rectangle delimited by the 4 \hyps (in the center of Figure~\ref{fig:doubleinthyps}), we have: $c_1>c_2$, $c_4>c_3$, $c_2>c_4$ and $c_3>c_1$, which yields $c_1>c_2>c_4>c_3>c_1$, a contradiction.
    \end{enumerate}
    \textcolor{black}{To conclude, we have proved by contradiction that $H(c_1, c_3)$ and $H(c_2, c_4)$ can neither be both of type $H^+$ nor both of type $H^-$. 
    Therefore, they cannot intersect twice. \qed}

    \begin{figure}[htb]
            \centering
            \begin{tikzpicture}[scale=1]
            \begin{axis}
        [axis x line=bottom,axis y line = left, 
        grid = major,
        axis equal image,
        ytick = {1,2,3,4,5,6},
        xtick = {1,2,3,4,5,6,7,8,9},
        xmin=0,
        xmax=8.5,
        ymin=0,
        ymax=8,
        nodes near coords,
        point meta=explicit symbolic]
        \addplot+[only marks] coordinates{(2,1)[$c_3$] (6,3)[$c_4$] (1,4)[$c_1$] (5,5)[$c_2$] (3,3)[$U^{34}$] (3.5,4)[$L^{12}$] };

        \addplot+[mark = none,black] coordinates{(5,0) (5,1) (3,3) (3,10)};
        \addplot+[mark = none,black] coordinates{(3.5,0) (3.5,4) (2.5,5) (2.5,10)};
        
               \addplot+[mark = none,blue] coordinates{(0,3.5) (5,3.5) (6,4.5) (10,4.5)};
        
                   \addplot+[mark = none,blue,dotted] coordinates{(2,3.75) (10,3.75)};

         \node[draw] at (axis cs:4.25,7) {$H(c_3, c_4)$};
         \node[draw] at (axis cs:1.25,7) {$H(c_1, c_2)$};
         \node[draw,blue] at (axis cs:7,5.2) {$H(c_2, c_4)$};
         \node[draw,blue] at (axis cs:7.5,3.25) {$H(c_1, c_3)$};
         
        \end{axis}
        \end{tikzpicture}
            \caption{The case where $H(c_1,c_3)$ and $H(c_2,c_4)$ are horizontal and intersect twice.}
            \label{fig:doubleinthyps}
        \end{figure}
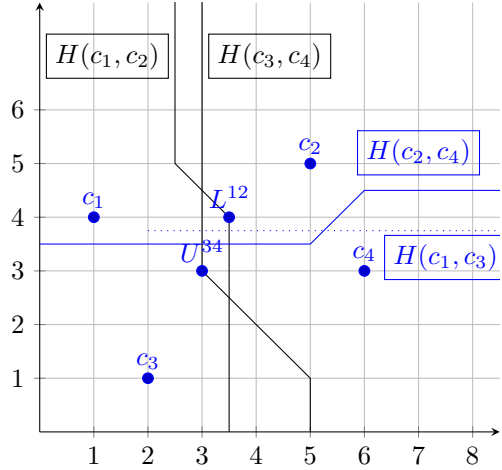
       
\end{itemize}
\end{proof}

\newpage

\section{Missing proofs of Section~\ref{sec:n>=5}}

\subsection{Missing part in the proof of Theorem~\ref{th:thetam4}}\label{app:thetam4}

{\bf Explicit construction of a family of profiles with $\Theta(m^4)$ distinct preferences}

\begin{proof}

\noindent
We set $c_1\!=\!(0,0)$ and $c_2\!=\!(1,2)$. According to the classification of hypersurfaces, $H(c_1, c_2)$ is horizontal (more precisely of type $H^-$). We then place $c_3$ in such a way that both $H(c_1, c_3)$ and $H(c_2, c_3)$ are vertical. To do so, we need to fix the values of $x_3$ and $y_3$ (coordinates of $c_3$) such that: 
\begin{align*}
    \vert x_1 - x_3 \vert & > \vert y_1 - y_3 \vert \\
    \mbox{and } \vert x_2 - x_3 \vert & > \vert y_2 - y_3 \vert. 
\end{align*}
\noindent 
\textcolor{black}{This can be done by setting, for instance, }
$$ \textcolor{black}{y_3 = \frac{y_1 + y_2}{2}}$$
\textcolor{black}{and} 
$$ \textcolor{black}{x_3 = \max\{ x_1, x_2\} + 2|y_1 - y_2 |}$$
\textcolor{black}{We check that, indeed, for $i \in \{ 1,2\}$, we have  
\begin{align*}
    |x_3 - x_i | & = | \, \max\{ x_1, x_2\} - x_i + 2|y_1 - y_2| \; | \\ & \geq  2|y_1 - y_2 |  > \frac{1}{2} |y_1 - y_2 | \geq |\frac{y_1 + y_2}{2} - y_i| = |y_3 - y_i | 
\end{align*}}
\textcolor{black}{where the strict inequality follows from the fact that $y_1\!\neq\! y_2$. 
Geometrically, choosing $y_3$ between $y_1$ and $y_2$ ensures that $|y_3 - y_i|$ is upper bounded by $|y_1 - y_2 | $. To guarantee that $|x_i - x_3| > |y_i - y_3|$, it is then sufficient  that $x_3$ is taken large enough - here, the distance from $\max\{x_1, x_2\}$ (and so in particular from both $x_1$ and $x_2$) to $x_3$ is greater than the above mentioned upper bound $|y_1 - y_2|$. \\ 
We will now generalize the idea: we want $H(c_{2k}, c_i)$ to be horizontal for all $k \geq 1, i < 2k$, and $H(c_{2k+1}, c_i)$ to be vertical for $k \geq 1, i < 2k+1$. Let us detail only the case of horizontal \hyps (the case of vertical ones being symmetric). \\
We set
$$ x_{2k} = \frac{\max\limits_{i < 2k}\{ x_i\}+ \min\limits_{i < 2k}\{x_i\}}{2}$$ and 
$$ y_{2k} = \max_{i < 2k}\{ y_i\} + 2\left(\max_{i < 2k}\{x_i\} - \min_{i < 2k}\{ x_i\}\right).$$
The geometrical intuition remains the same - as we need, for all $i\!<\!2k$, $|x_{2k} - x_i | < |y_{2k} - y_i|$, we choose the value of $x_{2k}$ so that $|x_{2k} - x_i|$ is upper bounded by $\max_{i < 2k}\{x_i\} - \min_{i < 2k}\{x_i\}$, and we chose then $y_{2k}$ in such a way that $|y_{2k} - y_i |$ is  greater than this upper bound. Formally, we have: 
\begin{align*}
    |y_{2k} - y_i | & = \left| \max_{i < 2k}\{ y_i\} + 2\left(\max_{i < 2k}\{x_i\} - \min_{i < 2k}\{ x_i\}\right) - y_i \right| \\
    & = \left| \max_{i < 2k}\{ y_i\} - y_i + 2\left(\max_{i < 2k}\{x_i\} - \min_{i < 2k}\{ x_i\}\right) \right| \\
    & \geq 2\left(\max_{i < 2k}\{x_i\} - \min_{i < 2k}\{x_i\} \right) \\
    & > \frac{1}{2}\left(\max_{i < 2k}\{x_i\} - \min_{i < 2k}\{x_i\} \right) \\
    & 
    \geq 
    \left| \frac{\max\limits_{i < 2k}\{ x_i\}+ \min\limits_{i < 2k}\{x_i\}}{2} - x_i \right| = |x_{2k} - x_i |
\end{align*}
Therefore we have $|y_{2k} - y_i | > |x_{2k} - x_i |$, so $H(c_{2k}, c_i)$ is horizontal. \\
Analogously, we set
$$ y_{2k+1} = \frac{\max\limits_{i < 2k+1}\{ y_i\}+ \min\limits_{i < 2k+1}\{y_i\}}{2}$$ and 
$$ x_{2k+1} = \max_{i < 2k+1}\{ x_i\} + 2\left(\max_{i < 2k+1}\{y_i\} - \min_{i < 2k+1}\{ y_i\}\right).$$
We prove as above (just by swapping the roles of $x$ and $y$) that, in this case, the \hyps $H(c_{2k+1}, c_i)$ are all vertical. \\}

For $k \leq m$, we denote by $H_k$ (resp. $V_k$) the number of horizontal (resp. vertical) \hyps after adding the $k$-th candidate. 

As all horizontal \hyps intersect all vertical hypersurfaces, these intersections already define $(H_k+1)(V_k+1)$ different areas (with distinct preferences). Hence, denoting by $A_m$ the number of areas after adding the $k$-th candidate, we have $A_m\geq (H_m+1)(V_m+1)$. 

Each time we add a candidate $c_k$, we obtain $k-1$ new \hyps $H(c_1, c_k),$ $\hdots,$ $H(c_{k-1}, c_k)$, all horizontal if $k$ is even, or all vertical if $k$ is odd. Consequently:
\begin{itemize}
    \item if $k$ is even, $H_k=H_{k-1}+(k-1)$ and $V_k=V_{k-1}$;
    \item if $k$ is odd, $H_k=H_{k-1}$ and $V_k=V_{k-1}+k-1$. 
\end{itemize}

We can deduce that $H_m\!\in\!\Theta(m^2)$ and $V_m\!\in\!\Theta(m^2)$, and thus $A_m\!\in\! \Omega(m^4)$.
\end{proof}

\end{document}